\documentclass[11pt]{article}
\usepackage{amssymb}
\usepackage{mathrsfs}
\addtolength{\topmargin}{-.5in} \addtolength{\textheight}{1in}
\addtolength{\oddsidemargin}{-.5in}
\addtolength{\evensidemargin}{-.5in}
\addtolength{\textwidth}{1in}
\usepackage{latexsym,amsmath,amssymb,amsfonts,epsfig,graphicx,cite,psfrag}
\usepackage{eepic,color,colordvi,amscd}
\usepackage{ebezier}
\usepackage{verbatim}
\usepackage{subfigure}
\usepackage{enumitem}
\usepackage{algorithm}
\usepackage{algorithmic}
\usepackage{amsthm}
\usepackage{comment}
\usepackage{color}
\usepackage{longtable}
\theoremstyle{plain}
\newtheorem{theo}{Theorem}[section]

\newtheorem{lem}[theo]{Lemma}

\newtheorem{prop}[theo]{Proposition}

\newtheorem{exmp}[theo]{Example}

\theoremstyle{definition}
\newtheorem{defi}[theo]{Definition}

\theoremstyle{remark}
\newtheorem{rem}[theo]{Remark}
\setcounter{section}{0}
\definecolor{blue}{rgb}{0,0,0.9}
\definecolor{red}{rgb}{0.9,0,0}
\definecolor{green}{rgb}{0,0.9,0}

\def\<{\left\langle}
\def\>{\right\rangle}

\def\M{\mathcal{M}}
\def\T{\mathcal{T}}
\def\D{{\rm D}}
\def\DD{{\rm Diag}}
\def\A{\mathcal{A}}
\def\B{\mathcal{B}}
\def\K{\mathcal{K}}
\def\L{\mathcal{L}}
\def\F{\mathcal{F}}
\def\R{\mathbb{R}}

\def\U{\tilde{U}}
\def\Re{\mathcal{R}}

\def\g{{\rm grad}}

\def\N{\mathcal{N}}

\def\S{\mathbb{S}}
\def\di{{\rm dim}}
\def\rr{{\rm rank}}
\def\dist{{\rm dist}}
\def\dd{{\rm diag}}
\def\P{{\rm Proj}}
\def\O{\mathcal{O}}
\def\OB{{\rm OB}}
\def\St{{\rm St}}
\def\({\left(}
\def\){\right)}
\def\sgn{\texttt{sgn}}
\def\inprod#1#2{\big\langle #1,\,#2\big\rangle}
\def\norm#1{\|#1\|}
\def\Ron{\texttt{Round}}
\let\svthefootnote\thefootnote
\newcommand\blankfootnote[1]{%
	\let\thefootnote\relax\footnotetext{#1}%
	\let\thefootnote\svthefootnote%
}

\def\barR{\bar{R}}
\def\gR{g_{\mbox{\tiny $R$}}}

\pagestyle{plain}

\begin{document}
	\title{Solving graph equipartition SDPs on an algebraic variety}
	\author{Tianyun Tang
\thanks{Department of Mathematics, National
         University of Singapore, Singapore
         119076 ({\tt ttang@u.nus.edu}).
         }, \quad 
	 Kim-Chuan Toh\thanks{Department of Mathematics, and Institute of 
Operations Research and Analytics, National
         University of Singapore, 
       Singapore
         119076 ({\tt mattohkc@nus.edu.sg}).  The research of this author is supported
by the Ministry of Education, Singapore, under its Academic Research Fund Tier 3 grant call (MOE-2019-T3-1-010).}
	 }

	\date{\today}
	\maketitle

\begin{abstract}
Semidefinite programs are generally challenging to solve due to their high dimensionality. Burer and Monteiro developed a non-convex approach to solve linear SDP problems by applying its low rank property. Their approach is fast because they used factorization to reduce the problem size. In this paper, we focus on solving the SDP relaxation of a graph equipartition problem, which involves an additional semidefinite upper bound constraint over the traditional linear SDP. 
By applying the factorization approach, we get a non-convex problem with an additional non-smooth spectral inequality constraint. We discuss when the non-convex problem is equivalent to the original SDP, and when a second order stationary point of the non-convex problem is also a global minimum. Our results generalize previous works on smooth non-convex factorization approaches for linear SDP to the non-smooth case. Moreover, the constraints of the non-convex problem involve an algebraic variety with some conducive properties that allow us to use Riemannian optimization techniques and non-convex augmented Lagrangian method to solve the SDP problem very efficiently
with certified global optimality.
\end{abstract}

\bigskip
\noindent{\bf keywords:} {Graph equipartition, Burer and Monteiro method, low rank SDP, algebraic variety, Riemannian optimization, augmented Lagrangian method} 
\\[5pt]

\section{Introduction}
\subsection{Literature Review}
Many combinatorial optimization problems such as max-cut and maximum stable set problems are NP-hard, which means that it is almost impossible to solve them exactly in polynomial time. 
These problems are often solved approximately via their semidefinite programming (SDP) relaxations 
which typically result in linear SDP problems of the following form:
\begin{equation}\label{SDP}
{\rm SDP:\ } \min_{X\in \S^{n}} \left\{ \<C,X\>:\ \A \(X\)=b,\ X\succeq 0\right\},
\end{equation}
where $b\in \R^m$ and $C\in \S^n$ are given data, and $\A :\S^n\rightarrow \R^m$ is a given linear mapping (see \cite{combsdp, colorsdp, SDPrelax} for examples of SDP relaxation).

The above linear SDP problem (\ref{SDP}) is a convex programming problem
which can be solved by various well developed solvers such as SDPT3 \cite{SDPT31,SDPT32},
MOSEK \cite{mosek}, and SDPNAL \cite{SDPNALp,SDPNAL}. Those solvers are efficient and accurate if $n$ is moderate (say less than $2000$) and
in the case of SDPT3 and MOSEK, $m$ is also not too large (say less than $20000$). However, if $n$ is large, the SDP will be challenging to solve by the aforementioned solvers because they will incur excessive computing cost and memory usage due to the high dimensionality in $n$. To overcome this difficulty, Burer and Monteiro \cite{BM1,BM2}
proposed to solve the linear SDP \eqref{SDP} by applying the factorization $X=RR^\top$
to reformulate it as the following non-convex problem:
\begin{equation}\label{SLR}
{\rm SDPLR:\ } \min\left\{ \inprod{C}{RR^\top} :\ \A \big(RR^\top\big)=b,\ R\in \R^{n\times r}\right\}.
\end{equation}
Due to the low rank property (see, e.g., \cite{inirank,FL1976,B1995}) of the SDP \eqref{SDP},  which states  that if the feasible region of \eqref{SDP} is compact, then it will have an optimal solution of rank $\leq \sqrt{2m}.$ So \eqref{SDP} is equivalent to \eqref{SLR} when $r\geq \sqrt{2m}$. Burer and Monteiro used an augmented Lagrangian method to solve \eqref{SLR} and its high efficiency was verified in numerical comparison to other algorithms such as the spectral bundle \cite{SB} and interior point methods \cite{interior}.

Since (\ref{SLR}) is a non-convex problem, it may have spurious local minima. As far as we know, the first foundational and thorough analysis about the global optimality of (\ref{SLR}) is done in a series of works \cite{Boumal1,Boumal2,Boumal3,Boumal4}, 
by Boumal et al. 
They proved that under LICQ-like regularity assumptions on the constraints of \eqref{SLR}, if $r\geq \sqrt{2m}$, then for almost all  $C\in \S^n$ except for a set of measure zero, every second order stationary point of \eqref{SLR} is a global optimal solution. This result explains why the BM approach can almost always find the global minimum in practice. They also did a smoothed analysis for problem (\ref{SLR}) and its penalty version, which was useful in choosing the stopping criterion for the augmented Lagrangian subproblems.  Moreover, with their regularity assumption, the set $\left\{ R\in \R^{n\times r}:\ \A \(RR^\top\)=b\right\}$ is a Riemannian manifold embedded in $\R^{n\times r},$ so they can use an optimization algorithm on manifold to solve \eqref{SLR}. They also developed a widely used toolbox called manopt \cite{manopt} to handle
optimization problems on a variety of manifolds. 

One specific example where Riemannian algorithms are expected to be
efficient for solving
the problem (\ref{SLR}) is the max-cut problem for which $\A (X) = \dd(X)$, and the underlying manifold is the oblique manifold:
$$\OB_{n,r}:=\left\{ R\in \R^{n\times r}:\ \dd\big(RR^\top\big)=e\right\},$$  whose tangent space and retraction mapping have a simple formula. In this case, Boumal et al. \cite{Boumal2} used manopt to solve (\ref{SLR})  and verified its high efficiency as compared to Burer and Monteiro's augmented Lagrangian method and other convex algorithms. 

The research by Boumal et al. in \cite{Boumal1,Boumal2,Boumal3,Boumal4} 
was a milestone in 
studying the 
global optimality of the non-convex factorized formulation of a linear SDP problem
by combining the geometric and low-rank property of the SDP problem.
Their results have since been generalized by Cifuentes in \cite{Cifu} to linear SDP problems with 
multiple positive semidefinite block variables and inequality constraints.
One of our goals in this paper is to generalize the theory of Boumal et al. to a class of SDP problems
with an additional semidefinite upper bound constraint, where the feasible sets
of the non-convex factorized
models are no longer smooth manifolds.

\subsection{Our Contribution}
In this paper, we consider the graph equipartition problem, that is, given a graph $G$ of size $n$, we want to partition it into $k$ parts of equal size $q:=n/k$, such that the total number of edges between different parts is minimized.
Its SDP relaxation is given as follows (see \cite{SDPrelax,SDPgep}):
\begin{equation}\label{GEP}
{\rm GEP:}\min\left\{ \<L,Y\>:\ \dd(Y)=e,\ Ye = qe,\ qI\succeq Y\succeq 0\right\},
\end{equation}
where $L$ is the Laplacian of the graph and $e\in \R^n$ is the vector of all ones. By changing the variable $X= \left(Y-\frac{1}{k}ee^\top\right)\frac{k}{k-1}$, problem (\ref{GEP}) is equivalent to the following SDP problem,
\begin{equation}\label{GEP1}
{\rm GEP1:}\min\left\{ \<L,X\>:\ \dd(X)=e,\ Xe=0,\ X\succeq 0,\ \frac{n}{k-1}I\succeq X\right\}.
\end{equation}
Note that for $k=2$, we can remove the redundant constraint $\frac{n}{k-1}I\succeq X,$ and it becomes a standard linear SDP\footnote{Note that one may replace $Xe=0$	by $\<X,ee^\top\>=0$ since $X\succeq 0$.}. To solve (\ref{GEP1}), we consider the following more general problem:
\begin{equation}\label{SDP1}
{\rm SDP1:}\ \min\left\{ \<C,X\>:\ \A\(X\)=b,\ Xe=0,\ \inprod{X}{ee^\top} =0,\ \alpha I\succeq X\succeq 0\right\},
\end{equation}
where $\A:\ \S^n\rightarrow \R^{m-1}$ is a general linear mapping and $\alpha>0.$ Note that we add the redundant constraint $Xe=0$ to facilitate our theoretical analysis later. For later usage, we 
define 
\begin{equation}
\Omega_\alpha^1:=\left\{X\in \S^n:\ \alpha I\succeq X\succeq 0\right\}. \tag{5a}
\end{equation}

The low-rank factorization model of (\ref{SDP1}) is as follows:
\begin{equation}\label{SDPLR}
{\rm SDPLR1:\ } \min\left\{ \frac{1}{2}\inprod{C}{RR^\top}
:R\in \R^{n\times r},\A\big(RR^\top\big)=b,\ R^\top e=0\ ,\|R\|_2\leq \sqrt{\alpha}\right\},
\end{equation}
where $\norm{R}_2$ is the spectral norm of $R$.
We add the factor $\frac{1}{2}$ in the objective function for convenience. Note that the main difference between (\ref{SDP}),(\ref{SLR}) and (\ref{SDP1}),(\ref{SDPLR}) is that we have an extra inequality constraint $\|R\|_2\leq \sqrt{\alpha}$. We will prove that (\ref{SDPLR}) is equivalent to (\ref{SDP1}) if $r$ is larger than some given bound, which is a generalization of Burer and Monteiro's result. We will also prove that under the 
constraint nondegeneracy condition (see \cite{Sensitivity}),
for almost all $C\in \S^n$ except for a set of measure zero, any second order stationary point of (\ref{SDPLR}) is also a global optimal solution. This is a generalization of the results in \cite{Boumal2,Boumal3,Boumal4}. 

If we choose $\A(\cdot) =\dd(\cdot),$ $b=e$ and $C=L$, then (\ref{SDPLR}) is the low-rank factorization of (\ref{GEP1}) for $\alpha={\frac{n}{k-1}}$, and we get
\begin{equation}\label{GEPLR}
 \min\left\{ \frac{1}{2}\inprod{C}{RR^\top}:
 R\in \R^{n\times r},\dd\big(RR^\top\big)=e,\ R^\top e=0\ ,\|R\|_2\leq \sqrt{\alpha}\right\},
\end{equation}
whose feasible region is the intersection of the following two sets:
\begin{equation}\label{BCmani}
\B_{n,r}:=\left\{ R\in \R^{n\times r}:\ \dd\big(RR^\top\big)=e,\ R^\top e=0\right\},
\end{equation}
\begin{equation}
\Omega_\alpha^2:=\left\{ R\in \R^{n\times r}:\ \|R\|_2\leq \sqrt{\alpha}\right\}.
\end{equation}
Note that if $X=RR^\top$, then $X\in \Omega_\alpha^1$ if and only if $R\in \Omega_\alpha^2.$ The set $\B_{n,r}$ is obtained by adding the constraint $R^\top e=0$ to the oblique manifold $\OB_{n,r}$. Just as each element of $\OB_{n,r}$ can be viewed as the Cartesian product of $n$ unit vectors, which are freely chosen, each element of $\B_{n,r}$ can be considered as the Cartesian product of $n$ unit vectors, but they are constrained to have their sum equal to zero. Therefore, those vectors are no longer independent from each other and $\B_{n,r}$ is not as simple as $\OB_{n,r}.$ In fact, later we will show that $\B_{n,r}$ is not even a manifold, but an algebraic variety defined as the set of common zeros of a system of polynomials (see \cite{AlgV} chapter 3 definition 1). However, even though it is not a manifold, we can still derive an explicit formula to describe the tangent cones and second order tangent sets of its singular points.
 More importantly, we find that one possible retraction mapping of $\B_{n,r}$ is related to the well known geometric median problem, which means that we can compute the retraction efficiently. These findings are crucial for our algorithmic design. We will design an algorithm that equips the underlying Riemannian optimization method with a strategy to check the global optimality of a singular point and escape any non-optimal singular point efficiently. To solve (\ref{GEPLR}) with the spectral upper bound, we use an augmented Lagrangian method on the algebraic variety $\B_{n,r}$, that is, we keep the primal iterations on $\B_{n,r}$ while penalizing $\|R\|_2\leq \sqrt{\alpha}.$ Now, we summarize our contributions as follows:
\begin{itemize}
\item We study low rank factorization of graph equipartition SDP and prove the equivalence between the factorized problem and the original convex problem in terms of global optimality. We also prove that under the constraint nondegeneracy condition \cite{Sensitivity}, any 
second order stationary point of the factorized problem is a global optimal solution with probability 1. These results generalize the results of Boumal et al. from \cite{Boumal1,Boumal2,Boumal3,Boumal4} since we have an extra non-smooth inequality constraint and our feasible region is not necessarily a smooth manifold. Our study here may serve as a prototype for extending the 
non-convex factorization approach to other types of SDP problems whose
resulting feasible regions may not be smooth manifolds.

\item We study a special algebraic variety $\B_{n,r}$, which is the set of zeros of the equality constraints of the factorized problem. We study the local geometric properties of $\B_{n,r}$ on both smooth points and singular points. We also show that retraction is equivalent to the geometric median problem under certain conditions. These properties allow us to conduct manifold optimization methods on the smooth part of $\B_{n,r}$ while escaping from non-optimal singular points of $\B_{n,r}$.

\item We develop a gradient descent method and an augmented Lagrangian method on an algebraic variety to solve graph equipartition SDP problems and conduct numerical experiments to demonstrate their high efficiency. Although manifold structures are increasingly used in algorithms
for solving SDP problems (see \cite{Boumal2,maniALM,WZW}), our novelty here is in developing algorithms to solve the SDP problems 
on an algebraic variety that is not necessarily a smooth manifold.
\end{itemize}

\subsection{Organization of this Paper}
The paper is organised as follows, in section 2, we prove the equivalence of the upper bounded linear SDP problem (\ref{SDP1}) and the low-rank factorized model (\ref{SDPLR}) under a certain regularity condition. We also establish a sufficient condition for a
second order stationary point of (\ref{SDPLR}) to be a global optimal solution. In section 3, we study the analyse the local geometric properties of $\B_{n,r}$. We show that its projection mapping and retraction mapping around smooth points can be computed efficiently. In section 4, we will design algorithms to solve graph equipartition SDP problems. In section 5, we conduct
 numerical experiments to evaluate the performance of our proposed approach and algorithm. In section 6, we give a brief conclusion. We show how to deal with the singular points in $\B_{n,r}$ in the appendix. Some auxiliary results and proofs are also put in the appendix.

\subsection{Notation}
Note that we often omit stating the dimension of a vector or matrix if it is already clear from the context. We use the following notation.
\begin{itemize}
\item[(1)] $e\in \R^n$ is the all ones vector and $J:=I-ee^\top/n.$ 
\item[(2)] $e_i$ is a column vector such that its $i$th entry is 1 and all other entries are zero. We will omit the dimension of $e_i$ if it is already clear.
\item[(3)] ${\rm S}^{n-1}:=\left\{x\in \R^n:\ \|x\|_2=1\right\}$ is the unit sphere, 
and $\St(n,p):=\left\{ X\in \R^{n\times p}:\ X^\top X=I\right\}$ is the Stiefel manifold.
\item[(4)] $\S^n$ and $\S^n_+$ denote the space of $n\times n$ symmetric matrices
and its subset of positive semidefinite matrices, respectively. 
 For notational simplicity, we use $X\succeq 0$ to denote $X\in \S_+^n$.
 We denote  the trace of a square matrix $X$ as ${\rm Tr}(X).$ 
We use $\|X\|$ or $\|X\|_F$ to denote the Frobenius norm of a given matrix $X.$
\item [(5)] For $x\in \R^n$, $\dd(x)\in \R^{n\times n}$ is the diagonal matrix with diagonal entries $x$. For $X\in \R^{n\times n}$, $\dd(X)\in \R^n$ is the diagonal vector of $X$, and $\DD(X)=\dd(\dd(X)).$ 
\item [(6)] $\P_F(X)$ is the orthogonal projection of $X$ onto $F,$ which could be a linear space or a compact set. $\P_{F^\perp}(X)=X-\P_{F}(X).$ 
\item [(7)] For $A,B\in \R^{m\times n}$, $A\circ B\in \R^{m\times n}$ is defined by $(A\circ B)_{ij}=a_{ij}b_{ij}.$
\end{itemize}

\subsection{Preliminaries.}

In this paper, we will frequently use the definition of tangent cone (see \cite{Perturbation} section 2). For any closed set $S\subset \R^n$ and $x\in S,$ the inner tangent cone $\T^i_S(x)$ is defined as follows:
\begin{equation}\label{tancone}
\T^i_S(x):=\left\{ h\in \R^n:\ \dist\(x+th,S\)=o(t),\ t\geq 0\right\}.
\end{equation}
The contingent tangent cone is defined as follows:
\begin{equation}\label{ctancone}
\T_S(x):=\left\{ h\in \R^n:\ \exists t_k \downarrow 0,\ \dist\(x+t_kh,S\)=o(t_k)\right\}.
\end{equation}
It is easy to see that $\T^i_S(x)\subset \T_S(x).$ Note that there is also another tangent cone called Clarke tangent cone (see \cite{Perturbation} section 2), but for a closed convex set $S$ that is non-singleton, these three cones are equal to each other (see \cite{Perturbation} proposition 2.55-2.57). Therefore, when we deal with a closed convex set $S$, we simply use $\T_S(x)$ to denote the tangent cone.

In order to understand more about the local geometric property of a given closed set $S\subset \R^n$, we need second order tangent sets which are defined as follows (see \cite{Perturbation} definition 3.28):
\begin{equation}\label{istan}
\T^{i,2}_S(x,h):=\left\{ w\in \R^n:\ \dd\Big(x+th+\frac{1}{2}t^2w,S\Big)=o(t^2),t\geq 0\right\},\ {\rm for}\ h\in \T^i_S(x),
\end{equation}
\begin{equation}\label{ostan}
\T^{2}_S(x,h):=\left\{ w\in \R^n:\ \exists t_k\downarrow 0,\ \dd\Big(x+t_kh+\frac{1}{2}t_k^2w,S\Big)=o(t_k^2)\right\},\ {\rm for}\ h\in \T_S(x),
\end{equation}
where the first and second sets are called inner second order tangent set and outer second order tangent set, respectively.

\section{Relation between SDP1 and SDPLR1}
\subsection{Low-rank property of SDP1}
The following theorem states the low-rank property of a linear SDP with an additional
semidefinite upper bound constraint of the form $\alpha I \succeq X$. This result here generalizes the results in \cite{inirank,  BM1,BM2}.

\begin{theo}\label{LR}
Consider the following SDP\footnote{Note that we use a general affine constraint $\B \(X\)=b$ to replace the equality constraints in (\ref{SDP1}), excluding the redundant constraint $Xe=0$.}:
\begin{equation}\label{ISDP}
\min\left\{ \<C,X\>:\ \B(X)=b,\ X\succeq 0,\ \alpha I\succeq X\right\},
\end{equation}
where $\alpha>0,$ $b\in \R^m$ has dimension $m\geq 1$, 
and $\B:\S^n \rightarrow \R^m$ is a linear mapping such that its adjoint $\B^*$ is injective\footnote{One can remove this condition and replace the rank bound $\lceil\frac{\beta}{\alpha}\rceil+\sqrt{2m}-1$ with $\lceil\frac{\beta}{\alpha}\rceil+\sqrt{2\cdot \rr\(\B\)}-1,$ where $\rr\(\B\)$ is the dimension of the range space of the linear mapping $\B.$ The proof is similar.}.
If (\ref{ISDP}) has a feasible solution and 
$$
\beta:=\max\left\{ {\rm Tr}(X):\ \B\(X\)=b,\ \alpha I \succeq X\succeq 0
\right\},$$
 then (\ref{ISDP}) has a solution $X$ such that $\rr\(X\)\leq \lceil\frac{\beta}{\alpha}\rceil+\sqrt{2m}-1.$
\end{theo}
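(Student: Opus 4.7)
The plan is to adapt the face-based extreme point argument of Barvinok and Pataki to the doubly bounded cone $\{X \in \S^n : 0 \preceq X \preceq \alpha I\}$. First, observe that the feasible region of (\ref{ISDP}) is compact since $\alpha I \succeq X$ forces $\|X\|_2 \leq \alpha$, so the infimum is attained and, by the Krein--Milman theorem, attained at an extreme point $X^*$ of the feasible set. It therefore suffices to bound $\rr(X^*)$.

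Let $U_0$, $U_s$, $U_\alpha$ be orthonormal bases for the eigenspaces of $X^*$ corresponding to eigenvalue $0$, eigenvalues lying strictly in $(0,\alpha)$, and eigenvalue $\alpha$, respectively. Write $s$ and $k$ for the number of columns of $U_s$ and $U_\alpha$, so $r := \rr(X^*) = s + k$. For a symmetric perturbation $\Delta$ with $\B(\Delta) = 0$, the pair $X^* + t\Delta$ and $X^* - t\Delta$ both lie in the feasible set for sufficiently small $t > 0$ if and only if, when expanded in the block basis $(U_0, U_s, U_\alpha)$, all blocks of $\Delta$ except the $(s,s)$-block vanish: the $(0,\cdot)$ blocks must vanish because $X^* \succeq 0$ is active on the $0$-eigenspace (standard Schur complement / face-tangent argument), and by symmetry the $(\alpha,\cdot)$ blocks must vanish because $\alpha I - X^* \succeq 0$ is active on the $\alpha$-eigenspace. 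Hence $\Delta = U_s E U_s^\top$ for some $E \in \S^s$, which lives in a space of dimension $s(s+1)/2$. Since $\B^*$ is injective, the condition $\B(\Delta) = 0$ imposes $m$ independent linear equations. If $s(s+1)/2 > m$, a nonzero such $\Delta$ exists and $X^* = \tfrac12(X^*+t\Delta) + \tfrac12(X^*-t\Delta)$ contradicts extremality. Therefore $s(s+1)/2 \leq m$, which gives $s \leq \sqrt{2m}$.

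It remains to combine this bound with the trace inequality $k\alpha + \Tr(\Lambda_s) = \Tr(X^*) \leq \beta$, where $\Lambda_s$ collects the $s$ eigenvalues strictly in $(0,\alpha)$. When $s = 0$, we get $r = k \leq \lfloor \beta/\alpha \rfloor \leq \lceil \beta/\alpha \rceil + \sqrt{2m} - 1$ (using $\sqrt{2m} \geq \sqrt{2} > 1$ since $m \geq 1$). When $s \geq 1$, the strict positivity $\Tr(\Lambda_s) > 0$ forces $k\alpha < \beta$ and hence $k \leq \lceil \beta/\alpha \rceil - 1$, so $r = s + k \leq \lceil \beta/\alpha \rceil - 1 + \sqrt{2m}$. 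Either way, the claimed bound follows. The main obstacle I anticipate is the two-sided admissibility characterization in the middle paragraph: one must carefully verify that the PSD constraints on \emph{both} boundaries, applied on disjoint eigenspaces, conspire to trap $\Delta$ inside the strictly interior $s\times s$ block. The rest is bookkeeping, with the case split at $s=0$ versus $s \geq 1$ being exactly what allows the ceiling $\lceil \beta/\alpha \rceil$ (rather than a more pessimistic $\lfloor \beta/\alpha \rfloor + 1$) to appear in the final bound.
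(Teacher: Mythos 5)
Your proof is correct, and it reaches the bound by a route that is recognizably the same counting argument as the paper's but organized quite differently. The paper argues \emph{dynamically}: it selects an optimal solution with the maximal number $\ell$ of eigenvalues equal to $\alpha$, perturbs only the middle eigenblock along a nonzero $S\in\S^s$ with $\B\big(P_1SP_1^\top\big)=0$ (possible once $s(s+1)/2>m$), slides $t$ to an endpoint $t_1$ of the feasibility interval, uses the maximality of $\ell$ to rule out a new eigenvalue landing on $\alpha$, and iterates this rank reduction until the bound is met. You argue \emph{statically}: the minimum is attained at an extreme point $X^*$ by Krein--Milman, the cone of admissible two-sided perturbations at $X^*$ is exactly the middle $s\times s$ block intersected with $\ker\B$, and extremality forces $s(s+1)/2\le m$ in one shot; your trace accounting ($k\le\lceil\beta/\alpha\rceil-1$ when $s\ge 1$, with the separate check at $s=0$) plays precisely the role of the paper's bound $\ell\le\lceil\beta/\alpha\rceil-1$ in the case $\ell<\beta/\alpha$. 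What your version buys is the elimination of both the iteration and the auxiliary ``maximal $\alpha$-multiplicity'' selection device; what it costs is that you must establish the full two-sided block-vanishing characterization of feasible directions at an extreme point, which you do correctly by applying the standard fact that a positive semidefinite matrix with a vanishing diagonal block has vanishing corresponding off-diagonal blocks to both $X^*\succeq 0$ and $\alpha I-X^*\succeq 0$ on their respective active eigenspaces. Both proofs share the same combinatorial heart (the dimension count $s(s+1)/2$ versus $m$ on the strictly interior eigenblock, and the $\alpha$-multiplicity capped by $\mathrm{Tr}(X)\le\beta$), and your bookkeeping delivers $\rr(X^*)\le\lceil\beta/\alpha\rceil+\sqrt{2m}-1$ exactly as claimed.
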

\begin{proof}
Since the feasible region is non-empty and compact,  an optimal solution exists. Suppose $X$ is an optimal solution such that it has the maximal\footnote{Since the number of eigenvalues equal to $\alpha$ is a bounded integer, we can always choose such an $X$ out of the optimal solution set.} number of eigenvalues equal to $\alpha$, say $\ell$. Let $r=\rr\(X\).$ Consider the eigenvalue decomposition $X=P\Lambda P^\top ,$ with 
$\Lambda = {\rm diag}(\lambda)$ and the eigenvalues $\lambda$ are arranged such that
$\lambda_1=\lambda_2=\ldots=\lambda_\ell=\alpha>\lambda_{\ell+1}\geq\ldots\geq \lambda_r>0=\lambda_{r+1}=\ldots=\lambda_n.$ 

It is easy to see that we have $\ell \alpha\leq {\rm Tr}(X)\leq \beta,$ so $\ell\leq \frac{\beta}{\alpha}.$ If $\ell=\frac{\beta}{\alpha}$, then $\rr\(X\)=\ell=\frac{\beta}{\alpha}\leq \lceil \frac{\beta}{\alpha}\rceil+\sqrt{2m}-1,$ which satisfies the bound. If $\ell<\frac{\beta}{\alpha}$, then $\ell\leq \lceil\frac{\beta}{\alpha}\rceil-1.$ Now, suppose $r>\lceil\frac{\beta}{\alpha}\rceil+\sqrt{2m}-1$, then $s:=r-\ell>\sqrt{2m}$. Let $\Lambda_1=\dd\big(\(\lambda_{\ell+1},\ldots,\lambda_r\)^\top\big),$ and $P_1$ be its corresponding matrix of eigenvectors. Consider the following linear system:
\begin{equation}\label{LS}
\B\big(P_1SP_1^\top\big)=\vec{0},\ S\in \S^s
\end{equation}
Because $\di\(\S^s\)=\frac{s(s+1)}{2}>m$, the linear system has a solution $S\neq {\bf 0}.$ Consider $X_t=X+tP_1SP_1^\top.$ We have $\B\(X_t\)=\B\(X\)=b$ for any $t\in \R.$ For $t\neq 0$ sufficiently small, $X_t$ is still in the feasible region of (\ref{ISDP}). From the optimality of $X$ and varying $t$ over slightly positive and negative values, we get $\<C,P_1SP_1^\top\>=0$.
Thus $\<C,X_t\>=\<C,X\>$ for any $t\in \R$, and $X_t$ is an optimal solution for all $t$  sufficiently small.
Let $\mathcal{T}:=\left\{t\in \R:\ X_t\succeq 0,\ \alpha I\succeq X_t\right\}.$ Since $S\neq 0$, $\mathcal{T}$ is a bounded closed interval $[t_1,t_2]$. We consider $X_{t_1}.$ It has eigenvalues $\lambda_1'=\lambda_2'=\ldots=\lambda_\ell'=\alpha\geq \lambda_{\ell+1}'\geq \ldots\geq \lambda_r'\geq 0=\lambda_{r+1}'=\ldots=\lambda_n'=0.$ Since $t_1$ is on the boundary of $\mathcal{T}$, either $\lambda_{\ell+1}'=\alpha$ or $\lambda_r'=0.$ However, since $X$ has the maximum number of eigenvalues equal to $\alpha$, $\lambda_{\ell+1}'<\alpha$ and we have $\lambda_r'=0.$ Therefore, we have found another optimal solution $X_{t_1}$ whose number of eigenvalues equal to $\alpha$ is still maximal but has rank less than $\rr\(X\).$ We can continue this operation from $X_{t_1}$ until we find an optimal solution with rank $\leq \lceil\frac{\beta}{\alpha}\rceil+\sqrt{2m}-1.$
\end{proof}

If we do not have the constraint $\alpha I\succeq X$ in Theorem \ref{LR} but $\left\{X\in \S^n:\ X\succeq 0,\ \B (X)=b\right\}$ is compact, then we can choose a sufficiently large $\alpha$ such that 
any point $X$ in the compact feasible region satisfies ${\rm Tr}(X) \leq \alpha$ (which also 
implies that $X \preceq \alpha I$).
In this case, the constant $\beta$ in Theorem \ref{LR} is at most $\alpha$ and  
then the rank-bound is $\sqrt{2m},$ which is almost the same as the classical bound of $\frac{1}{2}\(\sqrt{8m+1}-1\).$

Let us now consider (\ref{SDP1}), we can remove $Xe=0$ since it is redundant. Then there are $m$ affine constraints. From Theorem~\ref{LR}, there exists an optimal solution of rank $\leq \lceil\frac{\beta}{\alpha}\rceil+\sqrt{2m}-1.$ This means that (\ref{SDP1}) is equivalent to (\ref{SDPLR}) as long as $r\geq \lceil\frac{\beta}{\alpha}\rceil+\sqrt{2m}-1.$ 
For (\ref{GEP1}), since $\dd(X)=e$, we may choose $\beta=n$ and the rank bound is $k+\sqrt{2(n+1)}-2.$

\subsection{A sufficient condition for a {second order stationary point} of SDPLR1 to be globally optimal}

Here we establish a sufficient condition for a second order stationary point of the 
non-convex problem SDPLR1 to be globally
optimal. It turns out that the condition is closely related to the 
constraint nondegeneracy condition of the problem SDPLR1 (\ref{SDPLR}).
Thus in the first part of this subsection, we elaborate on the notation for matrix decomposition and several tangent cones and normal cones. In the second part, we state the regularity condition and its equivalence to the constraint nondegeneracy condition of 
(\ref{SDPLR}). In the third part, we prove that under the regularity condition, a rank-deficient {second order stationary point} of (\ref{SDPLR}) is also a global optimal solution. In the last part, we show that the rank-deficient condition usually holds provided the parameter $r$ in (\ref{SDPLR}) is large enough.

Let $R\in \R^{n\times r}$ be a feasible solution of (\ref{SDPLR}). Then $RR^\top$ is a feasible solution of (\ref{SDP1}). From now on, we suppose $R$  has the following singular value decomposition (SVD)\footnote{These notation for the SVD will be used frequently later.} 
\begin{eqnarray}\label{SVD}
R=\begin{pmatrix} U_1&U_2&U_3\end{pmatrix}\begin{pmatrix}\sqrt{\Lambda_1}& &\\&\sqrt{\Lambda_2}&\\ &&{\bf 0}\end{pmatrix}\ \begin{pmatrix} V_1^\top\\V_2^\top\\V_3^\top\end{pmatrix},
\end{eqnarray}
where $U_1\in \R^{n\times r_1}$, $U_2\in \R^{n\times r_2}$, $U_3\in \R^{n\times (n-r_1-r_2)}$ and $\begin{pmatrix}U_1&U_2&U_3\end{pmatrix}\in \St(n,n)$; $V_1\in \R^{r\times r_1},$ $V_2\in \R^{r\times r_2},$ $V_3\in \R^{r\times (r-r_1-r_2)}$ and $\begin{pmatrix}V_1&V_2&V_3\end{pmatrix}\in \St(r,r)$; $\sqrt{\Lambda_1}$ corresponds to singular values equal to $\sqrt{\alpha},$ $\sqrt{\Lambda_2}$ corresponds to singular values in the interval $(0,\sqrt{\alpha})$; ${\bf 0}$ is the zero matrix in  $\R^{(n-r_1-r_2)\times (r-r_1-r_2)}.$  From $R^\top e=0$ and hence $U_1^\top e=0$, $U_2^\top e=0$, we have $$J\(U_1,U_2,R\)=\(U_1,U_2,R\).$$

Based on the SVD of $R$, we know that $X=RR^\top$ has the following eigenvalue decomposition,
$$ X=\begin{pmatrix} U_1&U_2&U_3\end{pmatrix}\begin{pmatrix}\Lambda_1& &\\&\Lambda_2&\\ &&\hat{\bf 0}\end{pmatrix}\ \begin{pmatrix} U_1^\top\\U_2^\top\\U_3^\top\end{pmatrix},$$ where $\hat{\bf 0}\in \R^{(n-r_1-r_2)\times (n-r_1-r_2)}.$ From these decompositions, we have $\rr(R)=\rr(X)=r_1+r_2.$

We let $\T_{\Omega_\alpha^1}(X)$, $\T_{\Omega_\alpha^2}(R)$, $\N_{\Omega_\alpha^1}(X)$, $\N_{\Omega_\alpha^2}(R)$ be their tangent cones and normal cones\footnote{Note that the standard notation for normal cone is $\N_{\T_{\Omega_\alpha}\(X\)}\(Y\).$ As we always consider $Y=0,$ so we choose a more convenient notation. }. From proposition 2.5 and proposition 2.9 in \cite{Firstcone} (also see \cite{Secondcone}), if $\lambda_{\max}\(X\)=\alpha$ and $\lambda_{\min}\(X\)=0,$ then for any $H\in \S^n$, $Y\in \R^{n\times r},$ 
we have the following formulas for various directional derivatives:
$$\lambda_{\max}'\(X;H\)=\lambda_{\max}\big(U_1^\top HU_1\big),\ \lambda_{\min}'\(X;H\)=\lambda_{\min}\big(U_3^\top HU_3\big)
$$ and 
$$\sigma_{\max}'(R;Y)=
\lambda_{\max}\( \frac{U_1^\top YV_1+V_1^\top Y^\top U_1}{2}\).
$$
Thus, we have\footnote{Those cones are also correct if $\lambda_{\max}\(X\)<\alpha$ or $\lambda_{\min}\(X\)>0.$} 
\begin{eqnarray*}
\T_{\Omega_\alpha^1}\(X\)
&=&\left\{ H\in \S^n:\ U_1^\top H U_1\preceq 0,\ U_3^\top H U_3\succeq 0\right\},
\\[5pt]
\T_{\Omega_\alpha^2}\(R\)
&=&\left\{ Y\in \R^{n\times r}:\ U_1^\top YV_1+V_1^\top Y^\top U_1\preceq 0\right\},
\\[5pt]
\N_{\Omega_\alpha^1}\(X\)&=&\left\{ U_1 S U_1^\top-U_3 W U_3^\top:\ S\in \S^{r_1}_+,\ W\in \S^{n-r_1-r_2}_+\right\},
\\[5pt]
 \N_{\Omega_\alpha^2}\(R\)&=&\left\{ U_1ZV_1^\top:\ Z\in \S^{r_1}_+\right\}.
\end{eqnarray*}

Since (\ref{SDP1}) is a convex problem, the following condition is a sufficient 
optimality condition for SDP1.
\begin{prop}\label{KKTcvx}
If $\hat{X}$ is feasible for (\ref{SDP1}) and there exists $y\in \R^{m-1}$ such that $J\(C-\A^*(y)\)J \in -\N_{\Omega_\alpha^1}(\hat{X}),$ then $\hat{X}$ is an optimal solution of (\ref{SDP1}).
\end{prop}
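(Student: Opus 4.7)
The plan is to derive the conclusion from the standard convex subgradient optimality argument, with the only bit of care being the use of the redundant constraint $Xe=0$ (equivalently $e^\top X e=0$ together with $X\succeq 0$) to insert the projection $J = I - ee^\top/n$ into the objective.

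First I would record the simple observation that any feasible $X$ of (\ref{SDP1}) satisfies $JX = X = XJ$, hence $X = JXJ$. This follows immediately from $Xe = 0$. Consequently, for every feasible pair $X, \hat{X}$ one has
$$
\<C, X - \hat{X}\> = \<C, J(X-\hat{X})J\> = \<JCJ, X - \hat{X}\>.
$$
Next, for the linear constraint term I would use $\A(X) = \A(\hat{X}) = b$ to kill the multiplier $y$ after the same $J$-sandwich trick:
$$
\<J\A^*(y)J, X - \hat{X}\> = \<\A^*(y), J(X-\hat{X})J\> = \<\A^*(y), X - \hat{X}\> = \<y, \A(X) - \A(\hat{X})\> = 0.
$$
Combining these two identities yields
$$
\<C, X - \hat{X}\> = \<J(C-\A^*(y))J, X - \hat{X}\>.
$$

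Finally I would invoke the defining inequality of the normal cone to the convex set $\Omega_\alpha^1$: the hypothesis $J(C-\A^*(y))J \in -\N_{\Omega_\alpha^1}(\hat{X})$ means that for every $X \in \Omega_\alpha^1$,
$$
\<-J(C-\A^*(y))J,\, X - \hat{X}\> \leq 0,
$$
equivalently $\<J(C-\A^*(y))J, X - \hat{X}\> \geq 0$. Since every feasible point of (\ref{SDP1}) lies in $\Omega_\alpha^1$, substituting into the identity above gives $\<C, X\> \geq \<C, \hat{X}\>$ for all feasible $X$, proving optimality of $\hat{X}$.

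I do not anticipate any genuine obstacle in this argument; it is a textbook convex optimality verification. The only non-automatic step is the insertion of $J$ via the redundant constraint $Xe=0$, but since (\ref{SDP1}) enforces exactly this constraint, both the objective term and the affine-constraint multiplier term are unchanged under the $J$-sandwich, and no additional multiplier for $Xe=0$ is needed in the certificate.
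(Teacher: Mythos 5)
Your proof is correct and follows essentially the same route as the paper's: both arguments use the redundant constraints $Xe=0$ and $\<X,ee^\top\>=0$ to replace $C-\A^*(y)$ by its $J$-sandwich (the paper phrases this as absorbing the extra terms into a multiplier $\tilde{y}$ for the augmented constraint map $\tilde{\A}$), and then conclude via the normal-cone inequality for the convex set $\Omega_\alpha^1$. The only cosmetic difference is that the paper argues by contradiction while you verify the optimality inequality directly.
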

\begin{proof}
Define $\tilde{\A}: \S^n\rightarrow \R^{m-1}\times \R^n\times \R $ such that $\tilde{\A}\(X\)=\(\A(X),Xe,\<X,ee^\top\>\).$ Then $\tilde{\A}^*:\R^{m-1}\times \R^n\times \R\rightarrow \S^n$ is given by $\tilde{\A}^*(y,z,\beta)=\A^*y
+\frac{1}{2}(ez^\top+z e^\top)+\beta ee^\top.$ Define $\tilde{b}:=\(b,\vec{0},0\).$ Then (\ref{SDP1}) can be simplified as:
$$ 
\min\left\{ \<C,X\>:\ \tilde{\A}(X)=\tilde{b},\ X\in \Omega_\alpha^1\right\}.
$$ 
Note that $J\(C-\A^*(y)\)J \in -\N_{\Omega^1_\alpha}(\hat{X})$ implies that there exists $\tilde{y}\in \R^{m-1}\times \R^n\times \R$ such that $C-\tilde{\A}^*(\tilde{y})\in -\N_{\Omega_\alpha^1}(\hat{X}).$ Now, suppose $\hat{X}$ is not an optimal solution, then there exists $\bar{X}\in \S^n$ such that $\bar{X}\in \Omega_\alpha^1,$ $\tilde{\A}(\bar{X})=\tilde{b}$ and $\<C,\bar{X}\><\inprod{C}{\hat{X}}.$ Then we have $\tilde{\A}(\hat{X}-\bar{X})=0$ which implies that $\inprod{\tilde{\A}^*(\tilde{y})}{\hat{X}-\bar{X}}=0.$ Hence $\inprod{C-\tilde{\A}^*(\tilde{y})}{\hat{X}-\bar{X}}>0.$ 
Since $ C-\tilde{\A}^*(\tilde{y})\in -\N_{\Omega_\alpha^1}(\hat{X})$ and $\bar{X}\in {\Omega_\alpha^1}(\hat{X})$, 
we get by the definition of the normal cone that $ \inprod{ C-\tilde{\A}^*(\tilde{y})}{
\hat{X}-\bar{X}} \leq 0,$ which is a contradiction.
Thus $\hat{X}$ must be an optimal solution of \eqref{SDP1}.
\end{proof}

\medskip
Now, we define the mapping $G:\ \R^{n\times r}\rightarrow \R^{m-1}\times \R^{r}\times \R^{n\times r}\ {\rm such\ that}$
$$
G(R):=\Big(\frac{\A\(RR^\top\)-b}{2},R^\top e,R\Big).
$$ Let $\K:=\{0\}^{m-1}\times\{0\}^r\times \Omega_\alpha^2.$ Then the problem SDPLR1
(\ref{SDPLR}) can be simplified as 
\begin{equation}\label{Oct_25_1}
\min\left\{\frac{1}{2}\inprod{C}{RR^\top}:
\ G(R)\in \K\right\}. {\rm\tag{6\,$^\prime$}}
\end{equation}
Robinson's constraints qualification (CQ) \cite{Robin} is said to hold at $R$ for (\ref{Oct_25_1}) if 
\begin{equation}\label{RobinCQ}
0\in {\rm int}\left\{ G(R)+G'(R)\R^{n\times r}-\K\right\}.
\end{equation}

Note that by Corollary 2.98 in \cite{Perturbation}, the above condition is equivalent to 
\begin{eqnarray}\label{RobinCQ2}
G'(R)\R^{n\times r} - \T_{\K}(G(R)) = \R^{m-1}\times\R^r\times \R^{n\times r}
\end{eqnarray}
where $\T_\K(G(R))$ is the tangent cone to $\K$ at $G(R).$

For any $R\in \R^{n\times r}$, define the linear mapping 
$\gR:\R^{n\times r}\rightarrow \R^{m-1}\times \R^{r}$ by 
$$\gR(H)=\Big(\frac{1}{2}{\A\big( RH^\top+HR^\top\big)},H^\top e\Big).$$
 Then $\gR^*(y,z)=\A^*(y)R+ez^\top.$ Note that $\gR$ is the differential of the equality constraints of (\ref{SDPLR}) and $G'(R)[H] = \(\gR(H),H\)$. Let 
$$\Gamma_R:=\left\{ H\in \R^{n\times r}:\ U_1^\top H V_1+V_1^\top H^\top U_1=0\right\}.$$ 
The following definition is important for our later analysis.

\begin{defi}\label{SRE}
$R\in \R^{n\times r}$ such that $G(R)\in \K$ is called a regular point for (\ref{Oct_25_1}) if $\gR\(\Gamma_R\)=\R^{m-1}\times \R^r.$
\end{defi}

The above definition of a regular point is equivalent to the constraint nondegeneracy condition in Definition 2.1 of \cite{Sensitivity},
which states that
\begin{eqnarray}\label{Oct_31_1}
  G'(R) {\cal X} + {\rm lin}(\T_\K(G(R)) = {\cal Y}
\end{eqnarray}
where ${\cal X} =  \R^{n\times r}$,
${\cal Y} = \R^{m-1}\times \R^r\times \R^{n\times r}$, 
$\K = \{0\}^{m-1}\times \{0\}^r\times \Omega_\alpha^2$, and 
${\rm lin}(\T_\K(G(R))$ is the largest linear subspace contained in
$\T_\K(G(R)).$ In particular
$$
{\rm lin}(\T_\K(G(R)) = \{0\}^{m-1}\times \{0\}^r \times {\rm lin}(\T_{\Omega^2_\alpha}(R)).
$$
It is easy to show that the constraint nondegeneracy condition is equivalent to
$$
 \gR({\rm lin}(\T_{\Omega^2_\alpha}(R))) = \R^{m-1}\times \R^r.
$$
Since
$$
{\rm lin}(\T_{\Omega^2_\alpha}(R))
 = \{ Y\in\R^{n\times r} \mid U_1^\top Y V_1 + V_1^\top Y^\top U_1 = 0\}=\Gamma_R,
$$
we have shown the equivalence between a regular point and the 
constraint nondegeneracy condition.

\begin{rem}
Constraint nondegeneracy condition is also defined in \cite{Perturbation} Remark 4.72. They additionally require that the set $\K$ is $\mathcal{C}^1$-cone reducible (see Definition 3.135 of \cite{Perturbation}). One can prove that $\K=\{0\}^{m-1}\times \{0\}^r\times \Omega_\alpha^2$ is $\mathcal{C}^\infty$-cone reducible using a similar approach as in Example 3.140 of \cite{Perturbation}. We omit the proof here since we use the more convenient definition from \cite{Sensitivity}.
\end{rem}

\medskip
From (\ref{RobinCQ2}) and (\ref{Oct_31_1}), we have the following lemma.

\begin{lem}\label{RCQ}
If $R\in \R^{n\times r}$ such that $G(R)\in \K$ is regular, then it satisfies Robinson's constraints qualification for (\ref{Oct_25_1}). \end{lem}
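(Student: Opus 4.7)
The plan is to leverage the equivalence already established in the excerpt between regularity (Definition~\ref{SRE}) and the constraint nondegeneracy condition \eqref{Oct_31_1}. Once that equivalence is in hand, it suffices to pass from the equality
$$
G'(R)\mathcal{X} + {\rm lin}(\T_\K(G(R))) = \mathcal{Y}
$$
to Robinson's CQ in the form \eqref{RobinCQ2}, namely
$$
G'(R)\mathcal{X} - \T_\K(G(R)) = \mathcal{Y}.
$$
So the entire content of the lemma is that constraint nondegeneracy implies Robinson's CQ, which is a standard observation I would spell out in a couple of lines.

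The key point I would emphasize is that $\K = \{0\}^{m-1}\times\{0\}^r\times \Omega_\alpha^2$ is convex, since $\Omega_\alpha^2$ is the $\sqrt{\alpha}$-scaled unit ball of the spectral norm. Consequently $\T_\K(G(R))$ is a convex cone, and its linearity space ${\rm lin}(\T_\K(G(R)))$ is a linear subspace sitting inside $\T_\K(G(R))$. Because any $v$ in the linearity space satisfies both $v \in \T_\K(G(R))$ and $-v\in \T_\K(G(R))$, we obtain the inclusion
$$
{\rm lin}(\T_\K(G(R))) \;\subseteq\; -\T_\K(G(R)).
$$
Adding $G'(R)\mathcal{X}$ to both sides gives
$$
\mathcal{Y} \;=\; G'(R)\mathcal{X} + {\rm lin}(\T_\K(G(R))) \;\subseteq\; G'(R)\mathcal{X} - \T_\K(G(R)) \;\subseteq\; \mathcal{Y},
$$
where the first equality is \eqref{Oct_31_1} (which the excerpt has already shown to be equivalent to regularity), and the last inclusion is trivial. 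Thus \eqref{RobinCQ2} holds, which by Corollary~2.98 of \cite{Perturbation} is the same as Robinson's CQ \eqref{RobinCQ}.

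There is essentially no obstacle here: the heavy lifting has already been done in deriving the explicit expression ${\rm lin}(\T_{\Omega_\alpha^2}(R)) = \Gamma_R$ and in recognizing that regularity coincides with constraint nondegeneracy. The only small care needed is to verify convexity of $\K$ so that the tangent cone is a convex cone and the linearity space is indeed a subspace contained in both $\T_\K(G(R))$ and $-\T_\K(G(R))$; this follows immediately from $\Omega_\alpha^2$ being a norm ball.
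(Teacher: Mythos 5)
Your argument is correct and is essentially the paper's own: the paper derives the lemma directly from the equivalence of regularity with constraint nondegeneracy \eqref{Oct_31_1} together with the reformulation \eqref{RobinCQ2} of Robinson's CQ, exactly as you do. Your explicit observation that ${\rm lin}(\T_\K(G(R)))\subseteq -\T_\K(G(R))$ is the (routine) step the paper leaves implicit, so there is nothing to add.
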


The following lemma gives a sufficient 
condition for a {second order stationary point} of (\ref{SDPLR}) to be a global optimal solution.
\begin{lem}\label{local}
Suppose $R\in \R^{n\times r}$ such that $G(R)\in \K$ is a regular rank deficient 
{second order stationary point} of (\ref{SDPLR}). Then $RR^\top$ is a global optimal solution of (\ref{SDP1}) and so $R$ is a global optimal solution of (\ref{SDPLR}).
\end{lem}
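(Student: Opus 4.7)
The plan is to certify optimality of $\hat X := RR^\top$ for SDP1 by producing a dual multiplier $y\in\R^{m-1}$ that fulfils the hypothesis of Proposition~\ref{KKTcvx}. Writing $S_0 := C-\A^*(y)$ and using the explicit description of $\N_{\Omega^1_\alpha}(\hat X)$ recalled just before that proposition, what is needed is the block identity
$$JS_0J \;=\; -U_1\,\tilde Z\,U_1^\top + U_3\,W\,U_3^\top,\qquad \tilde Z\succeq 0,\; W\succeq 0.$$
The multiplier $y$ and the block $\tilde Z\in\S^{r_1}_+$ will come out of the first-order KKT conditions at the regular point $R$; the rank-deficiency together with the second-order condition will then force $W\succeq 0$.

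For the first-order step, regularity combined with Lemma~\ref{RCQ} yields Robinson's CQ, so there exist $y\in\R^{m-1}$, $z\in\R^r$, and $Z\in\S^{r_1}_+$ with $S_0R-ez^\top=-U_1 Z V_1^\top$. Left multiplication by $e^\top$ (using $U_1^\top e=0$) determines $z=R^\top S_0 e/n$ and reduces the identity to $JS_0 R=-U_1 Z V_1^\top$. Right multiplication by $V_1$ and by $V_2$, combined with the SVD identities $RV_1=\sqrt{\alpha}\,U_1$, $RV_2=U_2\sqrt{\Lambda_2}$ and the invertibility of $\Lambda_2$, produces
$$JS_0 U_1 = -U_1\tilde Z,\qquad JS_0 U_2 = 0,\qquad \tilde Z := Z/\sqrt{\alpha}\in\S^{r_1}_+.$$
Since $JU_1=U_1$ and $JU_2=U_2$, the same identities hold with $JS_0J$ in place of $JS_0$; the symmetry of $JS_0J$ and orthonormality of the frame $(U_1,U_2,U_3)$ then give the desired decomposition with $W := U_3^\top JS_0J U_3$.

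For the second-order step, set $\rho := U_3^\top e\in\R^{n-r_1-r_2}$. Because $U_1^\top e = U_2^\top e = 0$, we have $e=U_3\rho$ and $\|\rho\|^2=n$, whence $JU_3 = U_3 P_\rho$ with $P_\rho := I-\rho\rho^\top/n$. This gives $W = P_\rho\bigl(U_3^\top S_0 U_3\bigr)P_\rho$, so $W\succeq 0$ is equivalent to the inequality $\beta^\top U_3^\top S_0 U_3\,\beta\ge 0$ for every $\beta\perp\rho$. To prove this, fix such a $\beta$ and, using rank-deficiency, pick a unit $v_3\in\mathrm{span}(V_3)$. Setting $u:=U_3\beta$ and $H:=uv_3^\top$, a short verification gives $\gR(H)=0$, $U_1^\top HV_1=0$, $\langle U_1 Z V_1^\top,H\rangle=0$, and $\langle CR,H\rangle=0$, so $H$ lies both in the critical cone of (\ref{Oct_25_1}) and in $\Gamma_R=\mathrm{lin}\bigl(\T_{\Omega^2_\alpha}(R)\bigr)$. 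By regularity one can choose $K\in\Gamma_R$ solving $\gR(K)=\bigl(-\tfrac12\A(uu^\top),\,0\bigr)$, and we consider the curve $R(t) := R + tH + t^2 K$. Then $R(t)^\top e\equiv 0$ and $\A(R(t)R(t)^\top)-b=O(t^3)$. For the spectral constraint, the combined effect of $HV_1=0$, the skew-symmetry of $U_1^\top KV_1$ (a consequence of $K\in\Gamma_R$), and the key identity $U_1^\top u=0$ makes every first- and second-order contribution to the degenerate top eigenvalue $\alpha$ of $R^\top R$ vanish, so $\|R(t)\|_2^2=\alpha+O(t^3)$. Robinson's stability then produces a truly feasible curve $\tilde R(t) = R(t)+o(t^2)$, and the Taylor expansion of $f$ along it, simplified by the KKT relation $\langle\A^*(y)R,K\rangle=\langle y,\gR(K)_{1:m-1}\rangle=-\tfrac12 u^\top\A^*(y)u$ together with $\langle ez^\top,K\rangle=0$ and $\langle U_1 Z V_1^\top,K\rangle=0$, gives
$$f(\tilde R(t))-f(R) \;=\; \tfrac{t^2}{2}\,u^\top S_0 u + o(t^2).$$
Second-order stationarity of $R$ therefore forces $u^\top S_0 u = \beta^\top U_3^\top S_0 U_3\,\beta \ge 0$, hence $W\succeq 0$. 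Proposition~\ref{KKTcvx} now certifies that $\hat X$ is optimal for SDP1, and consequently $R$ is optimal for (\ref{SDPLR}).

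The delicate piece in this plan is the spectral-constraint bookkeeping along $R(t)$: it amounts to a second-order eigenvalue perturbation at the degenerate top eigenvalue $\alpha$ of $R^\top R$, in which the orthogonality $U_1^\top u=0$ built into the choice $u\in\mathrm{span}(U_3)\cap e^\perp$ is precisely what annihilates the otherwise nonzero coupling $V_1^\top (R^\top H+H^\top R) V_3$ that would produce a second-order upward shift of $\sigma_{\max}$. An equivalent route would invoke the Bonnans--Shapiro second-order necessary condition directly with the sigma term $\sigma\bigl(U_1 Z V_1^\top,\,\T^{i,2}_{\Omega^2_\alpha}(R,H)\bigr)$ instead of the curve argument, but verifying that this sigma term vanishes on the chosen $H$ reduces to essentially the same perturbation analysis.
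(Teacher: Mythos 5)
Your proposal is correct and follows essentially the same route as the paper's proof: first-order KKT at the regular point yields the block structure of $J(C-\A^*(y))J$ in the $(U_1,U_2,U_3)$ frame, and second-order necessary conditions along rank-deficiency-enabled directions supported on $\mathrm{span}(U_3)\times\mathrm{span}(V_3)$ (your rank-one $H=uv_3^\top$ with $u\perp e$ versus the paper's $d_S=JU_3SV_3^\top$) force the $U_3$-block to be positive semidefinite, after which Proposition~\ref{KKTcvx} certifies optimality. The only presentational difference is that you phrase the second-order step via a nearly feasible curve corrected by metric regularity, whereas the paper verifies membership in $\T^{i,2}_\K$ and invokes the abstract condition (3.99) directly — an equivalence you explicitly acknowledge, and whose verification is the same eigenvalue-perturbation computation in both cases.
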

\begin{proof}
First, from definition of $G$, we have 
$$ G'(R)[H]=\(\gR(H),H\)=\Big(\A\Big(\frac{RH^\top+HR^\top}{2}\Big),H^\top e,H\Big),
\ G''(R)[H,H]=\big( \A(HH^\top),0,0\big).
$$
Since $R$ is a regular {second order stationary point} of (\ref{SDPLR}), by Lemma~\ref{RCQ}, it satisfies Robinson's constraints qualification. This means that it satisfies first and second order necessary KKT conditions (See \cite{Perturbation} (3.16) and  (3.99)), i.e. there exists $(y,z)\in \R^{m-1}\times \R^r$ such that
\begin{equation}\label{KKT1}
G(R)\in \K,\ \(C-\A^*(y)\)R-ez^\top\in -\N_{\Omega_\alpha^2}(R);
\end{equation}
$$\forall\ d,w\in \R^{n\times r}\ \mbox{s.t}
\ G'(R)d=\(\gR(d),d\)\in \T_\K\( G(R)\)\ {\rm and}$$ 
$$G'(R)w+G''(R)\(d,d\)\in \T^{i,2}_{\K}\( G(R),G'(R)d\),\notag
$$
\begin{equation}\label{KKT2}
{\rm if}\ \<CR,d\>=0,\ {\rm then}\ \<CR,w\>+\<Cd,d\>\geq 0.
\end{equation}
Here $\T_\K^{i,2}(G(R),G'(R)d)$ is the inner second order tangent set defined in (\ref{istan}) as follows:
$$ \T_\K^{i,2}(G(R),G'(R)d)=\left\{ v\in \R^{n\times r}:\ {\rm dist} \Big(G(R)+tG'(R)d+\frac{t^2}{2}v,\K\Big)=o(t^2),\ t\geq 0\right\}.$$

From (\ref{KKT1}), we know that $X=RR^\top$ is feasible for (\ref{SDP1}) and there exists $Z\in \S^{r_1}$ such that $Z\preceq 0$ and 

\begin{equation} \label{Oct_27_1}
\(C-\A^*(y)\)R-ez^\top=U_1 Z V_1^\top\footnote{If $r_1$=0, then $U_1ZV_1^\top$=0.}.
\end{equation}

Our goal next is to show that $$ J\( C-\A^*y\)J\in -\N_{\Omega_\alpha^1}\(X\).$$

Multiplying $J$ to the left-hand side of (\ref{Oct_27_1}) and noting that $J(U_1,U_2,R)=(U_1,U_2,R),$ we get 
\begin{equation}\label{Sep15_2}
J\(C-\A^*(y)\)JR=U_1ZV_1^\top. 
\end{equation} 
Thus we have 
\begin{eqnarray*} 
\begin{pmatrix}U_1^\top\\U_2^\top\\U_3^\top\end{pmatrix}J\(C-\A^*(y)\)J\begin{pmatrix}U_1&U_2&U_3\end{pmatrix}\begin{pmatrix}
\sqrt{\Lambda_1}&&\\&\sqrt{\Lambda_2}&\\&&{\bf 0}\end{pmatrix}=\begin{pmatrix}Z&0&0\\0&0&0\\0&0&0\end{pmatrix},
\end{eqnarray*}
which implies that
\begin{equation}\label{Sep15_1}
J\(C-\A^*(y)\)J=\begin{pmatrix}U_1&U_2&U_3\end{pmatrix}\begin{pmatrix} Z\sqrt{\alpha}^{-1}&0&0\\0&0&0\\0&0&W\end{pmatrix}\begin{pmatrix}U_1^\top\\U_2^\top\\U_3^\top\end{pmatrix},
\end{equation}
for some $W\in \S^{n-r_1-r_2}.$ For any $S\in \R^{(n-r_1-r_2)\times (r-r_1-r_2)}$, define $d_S=JU_3SV_3^\top\in \R^{n\times r}.$ Note that we can define $d_S$ because $R$ is rank deficient and so $r-r_1-r_2>0$. Then $d_S^\top e=0$, and $Rd_S^\top=RV_3S^\top U_3^\top J=0$ because $RV_3=0.$ So $\gR(d_S)=0.$ We also have $$R+td_S=J
\begin{pmatrix}
U_1&U_2&U_3
\end{pmatrix}
\begin{pmatrix}
\sqrt{\Lambda_1}&&\\&\sqrt{\Lambda_2}&\\&&tS
\end{pmatrix}
\begin{pmatrix}
V_1^\top\\V_2^\top\\V_3^\top
\end{pmatrix},$$ 
which means that for $t$ sufficiently small $\|R+td_S\|_2\leq \sqrt{\alpha}$ i.e. $R+td_S\in \Omega_\alpha^2.$ Therefore, 
$$G'(R)d_S\in \T_\K\(G(R)\).$$ 
From $\gR\(\Gamma_R\)=\R^{m-1}\times \R^r,$ we may choose $w_S\in \Gamma_R$ such that 
$$\gR\(w_S\)=\(-\A\big(d_Sd_S^\top\big),0\).$$ 
Thus $\A\( \frac{Rw_S^\top+w_S R^\top}{2}+d_Sd_S^\top\)=0,$ and $w_S^\top e = 0$.

Next we show that 
\begin{equation} \label{Sep27_1}
\|R+td_S+\mbox{$\frac{1}{2}$}t^2 w_S\|_2\leq \sqrt{\alpha}+o(t^2).
\end{equation}
For the case where
$\|R\|_2<\sqrt{\alpha},$ it is easy to see that  (\ref{Sep27_1}) holds true. 
Now suppose $\|R\|_2=\sqrt{\alpha}.$ Then we have 
\begin{align}
&\lambda_{\max}\Big(  \Big(R+td_S+\frac{t^2}{2}w_S\Big)\Big(R+td_S+\frac{t^2}{2}w_S\Big)^\top  \Big)\notag \\
&=\lambda_{\max}\Big(  RR^\top+tRd_S^\top+td_SR^\top
+t^2\Big(d_Sd_S^\top+\frac{w_SR^\top+Rw_S^\top}{2}\Big) \Big)+O(t^3)\notag \\
&=\lambda_{\max}\Big(  RR^\top+t^2\Big(d_Sd_S^\top+\frac{w_SR^\top+Rw_S^\top}{2}\Big) \Big)+O(t^3)\notag \\
&=\lambda_{\max}\big(RR^\top\big)+t^2\lambda_{\max}\Big( U_1^\top\Big(d_Sd_S^\top+\frac{w_SR^\top+Rw_S^\top}{2}\Big)U_1\Big)+o(t^2)=\alpha+o(t^2),\notag 
\end{align}
where the first equality comes from the Lipschitz continuity of $\lambda_{\max},$ the second equality comes from $Rd_S^\top=0$,
 the third equality comes from the directional derivative of $\lambda_{\max},$ the fourth equality comes from $U_1^\top d_S=0$ and $U_1^\top w_S R^\top U_1+U_1^\top R w_S^\top U_1=0,$ where the latter
holds because $w_S\in \Gamma_R.$ This completes the proof of (\ref{Sep27_1}). \\

Now for $t>0$, we have 
\begin{align}
&\dist\(R+td_S+\frac{1}{2}t^2 w_S,\Omega_\alpha^2\)=o(t^2)
\notag \\
&\(R+td_S+\frac{1}{2}t^2w_S\)^\top e=0 \notag \\
&\A\(\Big(R+td_S+\frac{1}{2}t^2w_S\Big)\Big(R+td_S+\frac{1}{2}t^2w_S\Big)^\top\)
-b\notag \\
&=t^2\A\( d_Sd_S^\top+\(Rw_S^\top+w_SR^\top\)/2\)+O(t^3)=O(t^3),\notag
\end{align}
where the first inequality comes from \eqref{Sep27_1} and the rest 
come from $\gR(w_S)=\( -\A\(d_Sd_S^\top\),0\).$ 
The above results together imply that
$$G'(R)w_S+G''(R)(d_S,d_S)\in \T^{i,2}_\K\(G(R),G'(R)d_S\).$$
 Since $\<CR,d_S\>=\<C,R d_S^\top\>=0$, from (\ref{KKT2}), we have 
$$ \<CR,w_S\>+\<Cd_S,d_S\>\geq 0.$$
From $\A\( d_Sd_S^\top+\(Rw_S^\top+w_SR^\top\)/2\)=0$, we have $\<\A^*(y),d_Sd_S^\top\>+\<\A^*(y)R,w_S^\top\>=0.$ 
Thus we have 
$$ \<\(C-\A^*(y)\)R,w_S\>+\inprod{\(C-\A^*(y)\)}{d_Sd_S^\top}\geq 0.$$ 

Since $Jw_S=w_S$, $JR=R$, $Jd_S=d_S$, it follows from the above inequality that 
$$ 
\<J\(C-\A^*(y)\)JR,w_S\>+\inprod{J\(C-\A^*(y)\)J}{d_Sd_S^\top}\geq 0.
$$ 
From (\ref{Sep15_2}) and $w_S\in \Gamma_R$, we have $\<J\(C-\A^*(y)\)JR,w_S\>=\<U_1ZV_1^\top,w_S\>= \inprod{Z}{U_1^\top w_S V_1}=0.$
 So 
 $$ 
 \<J\(C-\A^*(y)\)J,d_Sd_S^\top\>=\inprod{J\(C-\A^*(y)\)J}{U_3SS^\top U_3^\top}\geq 0,
 $$ 
 for any $S\in \R^{(n-r_1-r_2)\times(r-r_1-r_2)}.$ 
 This means that $W=U_3^\top J\(C-\A^*(y)\)J U_3\succeq 0.$ 
 Thus, from \eqref{Sep15_1},
  $$J\(C-\A^*(y)\)J \in -\N_{\Omega_\alpha^1}\(X\).$$  From Proposition~\ref{KKTcvx}, $X=RR^\top$ is an optimal solution for (\ref{SDP1}).
\end{proof}
\medskip
The following proposition is {inspired by Lemma 3 of \cite{Boumal1} and Theorem 2 in \cite{Boumal2}.} It tells that the rank deficient condition usually holds.
\begin{lem}\label{dcount}
Suppose $\beta:=\max\left\{ {\rm Tr(X)}:\ \A\(X\)=b,\ Xe=0,\ X\succeq 0\right\}<\infty$ and $r\geq \lceil\frac{\beta}{\alpha}\rceil+\sqrt{2m}.$ Then for all $C\in \S^n$ except for a zero measure set, any {first order stationary point} $R$ of (\ref{SDPLR}) that satisfies Robinson's constraints qualification is rank deficient.
\end{lem}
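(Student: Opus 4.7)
The plan is to follow the strategy of Boumal et al.~\cite{Boumal1,Boumal2}, extending it to the stratified setting caused by the spectral inequality $\|R\|_2\leq\sqrt{\alpha}$. The goal is to show, via a Sard-type argument, that the ``bad'' set
$$\mathcal{C}^*:=\{C\in\S^n:\exists\,R\in\R^{n\times r}\text{ full-rank regular first order stationary point}\}$$
has Lebesgue measure zero. First I would decompose $\mathcal{C}^*=\bigcup_{r_1}\mathcal{C}^*_{r_1}$ according to the number $r_1$ of singular values of $R$ equal to $\sqrt{\alpha}$. Any feasible $R$ satisfies $r_1\alpha\leq\Tr(RR^\top)\leq\beta$, so $r_1\leq\lceil\beta/\alpha\rceil$ and only finitely many strata arise. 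Crucially, combining $r\geq\lceil\beta/\alpha\rceil+\sqrt{2m}$ with this bound gives $r_2:=r-r_1\geq\sqrt{2m}$, hence $r_2(r_2+1)/2\geq m>m-1$; this numerical inequality will drive the whole argument.

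Second, I would analyze the KKT identity (\ref{KKT1}) at such an $R$, writing $M:=C-\A^*(y)$ and using the SVD (\ref{SVD}) with $r_1+r_2=r$ (so that $V_3$ is absent). Multiplying $MR=ez^\top+U_1ZV_1^\top$ on the left by $U_2^\top$ and using $U_2^\top e=0$ (from $R^\top e=0$) together with $U_2^\top U_1=0$ gives $U_2^\top MR=0$; right-multiplying by $V_1$ and by $V_2$ then isolates the two key identities $U_2^\top MU_1=0$ and $U_2^\top MU_2=0$. The latter consists of $r_2(r_2+1)/2$ scalar equations on $(C,y)$, and since the linear map $y\mapsto U_2^\top\A^*(y)U_2$ from $\R^{m-1}$ to $\S^{r_2}$ has image of dimension at most $m-1<r_2(r_2+1)/2$, the condition $U_2^\top CU_2\in\mathrm{Image}(y\mapsto U_2^\top\A^*(y)U_2)$ restricts $C$ to a subspace of positive codimension in $\S^n$ for each fixed $U_2$.

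Third, I would assemble all KKT tuples $(R,y,z,Z,C)$ in the stratum $r_1$ into a semi-algebraic parameter variety $\mathcal{K}_{r_1}$, parameterized by the SVD of $R$ (with care taken for the $O(r_1)$ ambiguity from the repeated singular value $\sqrt{\alpha}$), the multipliers $(y,z,Z)$ with $Z\preceq 0$, and a symmetric solution $C$ of the KKT equation. The $O(r)$ gauge action $R\mapsto RQ$ (with $z\mapsto Q^\top z$ and $V_i\mapsto Q^\top V_i$) leaves $C$ invariant and so contributes a fiber of dimension $r(r-1)/2$ to the projection $\mathcal{K}_{r_1}\to\S^n$. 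Combining this gauge reduction with the codimension $r_2(r_2+1)/2-(m-1)\geq 1$ coming from $U_2^\top MU_2=0$ should produce an image of dimension strictly less than $n(n+1)/2=\dim\S^n$. Since each $\mathcal{C}^*_{r_1}$ is a semi-algebraic set of strictly smaller dimension than $\S^n$, it has Lebesgue measure zero, and the finite union over $r_1\in\{0,1,\ldots,\lceil\beta/\alpha\rceil\}$ preserves this.

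The main obstacle will be making the dimension count in the third step sharp. A naive count treating all $nr$ KKT equations as independent yields $\dim\mathcal{K}_{r_1}\leq n(n+1)/2+r(r-1)/2$, which after modding out the gauge only produces the non-strict bound $\dim\mathcal{C}^*_{r_1}\leq n(n+1)/2$. The strict improvement must come from the over-determined relation $U_2^\top(C-\A^*(y))U_2=0$: one either shows directly that the fibers of the projection $\mathcal{K}_{r_1}\to\S^n$ strictly contain the $O(r)$-orbit at generic points, or one counts inside the smaller $(R,y)$-space (with $C$ solved from the KKT) and exploits that the linear map $y\mapsto U_2^\top\A^*(y)U_2$ is generically injective, which yields the extra codimension $r_2(r_2+1)/2-(m-1)$. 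Verifying this transversality uniformly as $R$ varies over the stratum, and properly handling the boundary strata $r_1\geq 1$ where the extra identity $U_2^\top MU_1=0$ also contributes, forms the technical core of the argument. Once this is in place, Sard's theorem applied to the smooth part of each stratum yields the conclusion.
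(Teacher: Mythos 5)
Your outline identifies the right starting point (the KKT identity \eqref{KKT1} at a full-rank regular stationary point, and the bound $r_1\leq\lceil\beta/\alpha\rceil-1$ forcing $r_2\geq\sqrt{2m}$), but the decisive step is missing, and you say so yourself: the strict dimension drop for the projection of your parameter variety onto $\S^n$ is exactly the ``technical core'' you defer. As sketched, the count does not close. The condition $U_2^\top(C-\A^*(y))U_2=0$ gives codimension roughly $r_2(r_2+1)/2-(m-1)=O(\sqrt{m})$ in $C$ for each \emph{fixed} $U_2$, but $U_2$ ranges over (a quotient of) $\St(n,r_2)$, whose dimension is of order $nr_2$ and is not absorbed by the $O(r)$ gauge action alone; a union of codimension-one subspaces of $\S^n$ indexed by a high-dimensional family of frames can easily have positive measure (e.g.\ $\bigcup_{\|u\|=1}\{C:\,u^\top Cu=0\}$). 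So one cannot simply add the gained codimension to the naive count; one has to show that the locus of admissible $C$ is independent of (or only weakly dependent on) the frame, and your proposal does not do this.

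The paper's proof closes this gap with a single observation that makes the whole stratified Sard machinery unnecessary: from \eqref{KKT1}, $\(C-\A^*(y)\)R=ez^\top-U_1ZV_1^\top$ has rank at most $1+r_1$, so Sylvester's rank inequality together with $\rr(R)=r\geq\lceil\beta/\alpha\rceil+\sqrt{2m}$ and $r_1\leq\lceil\beta/\alpha\rceil-1$ gives $\rr\(C-\A^*(y)\)\leq n+r_1+1-r\leq n-\sqrt{2m}$. Hence every bad $C$ lies in the \emph{fixed} set $\M_{\leq n-\sqrt{2m}}+\A^*\(\R^{m-1}\)$, whose dimension is $\frac{n(n+1)}{2}-\frac{\sqrt{2m}(\sqrt{2m}+1)}{2}+m-1<\frac{n(n+1)}{2}$; no dependence on $R$, $U_2$, or the stratum survives, so the measure-zero conclusion is immediate. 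The lesson is that your partial identities $U_2^\top MU_2=0$ and $U_2^\top MU_1=0$ should be upgraded to a rank bound on the dual slack matrix $M=C-\A^*(y)$ itself; that converts a transversality statement quantified over all stationary points into membership in one low-dimensional algebraic set. To repair your argument you would either need to supply this rank observation (at which point your strata and gauge bookkeeping become redundant) or genuinely carry out the fiber-dimension estimate you postponed, which I do not see how to do along the lines you describe.
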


\begin{proof}
Let $C\in \S^n$ be such that (\ref{SDPLR}) has a full rank {first order stationary point} $R\in \R^{n\times r}$ satisfying Robinson's constraint qualification. It necessarily satisfies the 
first order KKT condition (\ref{KKT1}), which means that 
$$ \(C-\A^*(y)\)R\in ez^\top-\left\{ U_1ZV_1^\top:\ Z\succeq 0\right\}\subset \M_{\leq 1+r_1},$$ where $\M_{\leq 1+r_1}=\left\{A\in \S^n:\ \rr\(A\)\leq 1+r_1\right\}.$ Thus
 we have $$ \rr\(R\)+\rr\(C-\A^*(y)\)-n\leq \rr\( \(C-\A^*(y)\)R\)\leq r_1+1,$$ {where the first inequality follows from Sylvester’s rank inequality (see 10.5 of \cite{mcook}).} Note that $r_1\leq \lceil\frac{\beta}{\alpha}\rceil-1$ (for otherwise because $r>\lceil\frac{\beta}{\alpha}\rceil$ and $R$ is full rank, we would get the contradiction that ${\rm Tr}\(RR^\top\) >\lceil\frac{\beta}{\alpha}\rceil\alpha\geq\beta$).

Since $R$ has full rank, we have $$ \rr\(C-\A^*(y)\)\leq n+r_1+1-r\leq n-\sqrt{2m}.$$
Then $C\in \M_{\leq n-\sqrt{2m}}+\A^*\(\R^{m-1}\)$ whose dimension is bounded by $\frac{n(n+1)}{2}-\frac{\sqrt{2m}\(\sqrt{2m}+1\)}{2}+m-1<\frac{n(n+1)}{2}.$ This means 
that
$C\in\S^n$ is contained in a set of measure zero.
\end{proof}

The following theorem directly follows from Lemma~\ref{local} and Lemma~\ref{dcount}. 
\begin{theo}\label{lisg}
Suppose $\beta:=\max\left\{ {\rm Tr}\(X\):\ \A\(X\)=b,\ Xe=0,\ X\succeq 0\right\}<\infty$ and $r\geq \lceil\frac{\beta}{\alpha}\rceil+\sqrt{2m}.$ Then for all $C\in \S^n$ except for a zero measure set, any regular {second order stationary point} $R$ of (\ref{SDPLR}) is a global minimizer of $(\ref{SDPLR}).$ Moreover, if we can get $\Theta\in \N_{\Omega_2^\alpha}(R)$ in the KKT system (\ref{KKT1}), we have $\gR^*(y,z)=CR+\Theta.$ Because $\gR$ is an onto linear mapping, $(y,z)=\(\gR\, \gR^*\)^{-1}\gR\(CR+\Theta\).$ Thus, we can recover the dual variable for problem (\ref{SDP1}).
\end{theo}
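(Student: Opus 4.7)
The plan is to obtain the theorem as a direct corollary by stacking Lemma~\ref{RCQ}, Lemma~\ref{dcount}, and Lemma~\ref{local}. First I would observe that a regular point automatically satisfies Robinson's constraint qualification by Lemma~\ref{RCQ}, so any regular second order stationary point of \eqref{SDPLR} is in particular a first order stationary point satisfying Robinson's CQ. The hypotheses of Lemma~\ref{dcount}, namely $\beta<\infty$ and $r\geq \lceil \beta/\alpha\rceil+\sqrt{2m}$, are identical to those imposed in the theorem, so Lemma~\ref{dcount} gives that outside a measure zero subset of $C\in\S^n$, every such $R$ must be rank deficient. Combining rank deficiency with regularity and second order stationarity, Lemma~\ref{local} then yields that $RR^\top$ is optimal for \eqref{SDP1}, and therefore $R$ is a global minimizer of \eqref{SDPLR}.

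For the ``moreover'' clause on dual recovery, I would unpack the first order KKT condition \eqref{KKT1}: it says that there exist $(y,z)\in\R^{m-1}\times\R^r$ and $\Theta\in\N_{\Omega_\alpha^2}(R)$ with $(C-\A^*(y))R-ez^\top=-\Theta$, which rearranges to $\gR^*(y,z)=\A^*(y)R+ez^\top=CR+\Theta$. Regularity (Definition~\ref{SRE}) gives $\gR(\Gamma_R)=\R^{m-1}\times\R^r$, so in particular $\gR$ is surjective; hence $\gR\,\gR^*$ is an invertible self-adjoint operator on $\R^{m-1}\times\R^r$. Applying $\gR$ to both sides of $\gR^*(y,z)=CR+\Theta$ gives $\gR\,\gR^*(y,z)=\gR(CR+\Theta)$, and inverting yields the claimed formula $(y,z)=(\gR\,\gR^*)^{-1}\gR(CR+\Theta)$. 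The multiplier $y$ then plays the role of the dual variable for the affine constraint $\A(X)=b$ in \eqref{SDP1}.

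The proof itself is short because all the substantial work has already been done inside Lemma~\ref{local} (the second order tangent set computations and the reduction to Proposition~\ref{KKTcvx}) and Lemma~\ref{dcount} (the dimension count on the set of ``bad'' cost matrices). The only real thing to verify is the chain ``regular $\Rightarrow$ Robinson's CQ $\Rightarrow$ rank deficient $\Rightarrow$ globally optimal'', each link of which is immediate from the earlier lemmas, so I do not anticipate any significant obstacle beyond bookkeeping.
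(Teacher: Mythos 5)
Your proposal is correct and follows essentially the same route as the paper, which states the theorem as a direct consequence of Lemma~\ref{local} and Lemma~\ref{dcount}: regularity gives Robinson's CQ via Lemma~\ref{RCQ}, Lemma~\ref{dcount} then forces rank deficiency outside a measure-zero set of $C$, and Lemma~\ref{local} delivers global optimality, while the dual-recovery formula is exactly the rearrangement of (\ref{KKT1}) together with surjectivity of $\gR$ already indicated in the theorem statement. No gaps.
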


\begin{rem} We note that the construction of the dual variables $(y,Z)$ from the primal variable
$R$ in 
Theorem \ref{lisg} is necessary since in our Riemannian based algorithm, 
the dual variables are not constructed explicitly. {We also note that Theorem~\ref{lisg} has the prerequisite that the output of an optimization algorithm is a regular point. Such a regularity assumption is commonly used in the optimization literature because without it, the 
 solution obtained by an algorithm may not even be a KKT solution.}
\end{rem}

\subsection{Comparison between Theorem~\ref{lisg} and Theorem 4 in \cite{Cifu}}

{
We should mention that the SDP upper bound of (\ref{SDP1}) can also be handled by introducing a slack variable $Y$ and writing problem (\ref{SDP1}) equivalently as follows:
\begin{equation}\label{SDPC}
\min\left\{ \<C,X\>:\ \A(X)=b,\ \<X,ee^\top\>=0,\ X+Y=\alpha I,\ X,Y\in \S^n_+\right\}.
\end{equation}
In this case, 
problem (\ref{SDPC}) is a multi-block SDP problem. By using the partial BM factorization in section 4 of \cite{Cifu}, we get the following problem:
\begin{equation}\label{SDPCLR}
\min_{R\in \R^{n\times r}}\left\{ \<C,RR^\top\>:\ \A(RR^\top)=b,\ \<RR^\top,ee^\top\>=0,\ RR^\top+Y=\alpha I,\ Y\in \S^n_+\right\}.
\end{equation}

Theorem 4 in \cite{Cifu} says that if
\begin{equation}\label{Crank}
r(r+1)/2>m+n(n+1)/2-r(Y)\(r(Y)+1\)/2,
\end{equation}
then for a generic $C\in \S^n$, any 2-critical point $\(R,Y\)$ of (\ref{SDPCLR}) satisfies that $\(RR^\top,Y\)$ is global optimal for (\ref{SDPC}) (see section 2 of \cite{Cifu} for the definition of a 2-critical point). Here $r(Y)$ is a bound on the smallest rank of $Y$ for some feasible solution $\(X,Y\)$ of (\ref{SDPC}). From the definition of $\beta$ and $\alpha I\succeq X,$ we have that $r(Y)\geq n-\beta/\alpha.$ This implies the following rank condition of $r$:
\begin{equation}\label{Crank1}
r(r+1)/2>m+n(n+1)/2-\( n-\beta/\alpha\)\(n-\beta/\alpha+1\)/2.
\end{equation}

We assume that $\beta/\alpha\geq 1$; for otherwise the constraint $\alpha I\succeq X$ can be removed. If we have $1\leq \beta/\alpha=o(n)\footnote{This is true for SDP relaxations of graph equipartition and $k$-means clustering in (see \cite{Kmeans} (20)). Actually, for an SDP problem where the trace of $X$ is fixed as a constant $\beta$, the SDP upper bound $\alpha I\succeq X$ implies that $\rr\(X\)\geq \beta/\alpha.$ Thus, we have to assume $\beta/\alpha=o(n)$ if we want the BM approach to be efficient.}$ and $m=o(n^2)\footnote{This is often the case when we apply the BM approach. Otherwise there are too many constraints, and the SDP relaxation is not guaranteed to be low rank anymore.}$, then (\ref{Crank1}) gives the following rank bound
\begin{equation}\label{Crank2}
r>\sqrt{2m+(2+o(1))n\beta /\alpha}.
\end{equation}
In comparison, our rank condition is 
\begin{equation}\label{Orank}
r\geq \lceil\frac{\beta}{\alpha}\rceil+\sqrt{2m}=\sqrt{2m+o(1)n\beta/\alpha},
\end{equation}
which is strictly better than the condition in (\ref{Crank2}), especially when $n$ is sufficiently large. In particular, in graph equipartition with $k$ parts, (\ref{Crank2}) implies that $r>\sqrt{(1+o(1))2kn},$ while (\ref{Orank}) implies that $r\geq (1+o(1))\sqrt{2n}.$ The latter is clearly better than the former bound when $k$ is large. Moreover, if one considers the SDP relaxation of a $k$-means clustering problem (see \cite{Kmeans} (20)), then (\ref{Crank2}) implies that $r>\sqrt{(1+o(1))(k-1)n},$ while our condition gives $r\geq \sqrt{2}+k-1,$ 
which is a constant that is independent of $n$. In contrast, the former lower bound derived from (\ref{Crank2})
can be large when $n$ is much larger than $k$, which is usually the case for 
a clustering problem. Apart from the rank condition, our BM factorization (\ref{SDPLR}) only involves one low-rank matrix variable $R,$ while (\ref{SDPCLR}) also involves a large high rank positive semidefinite matrix variable $Y$\footnote{Note that $RR^\top +Y=\alpha I.$ If $r$ is small, then $Y$ must by high rank.}, 
which would lead to
expensive computation when one has to handle such a high-dimensional matrix variable
in any algorithm for solving  (\ref{SDPCLR}).
Moreover, we are not aware of an existing algorithm 
that can handle the problem  (\ref{SDPCLR}) efficiently.
}

\section{An algebraic variety}

In this section, we consider the case where the mapping $\A(\cdot)=\dd(\cdot)$. The constraints of (\ref{SDPLR}) come from two parts, one of them is the following set,
\begin{equation}\label{BCalg}
\B_{n,r}=\left\{ R\in \R^{n\times r}:\ \dd\big(RR^\top\big)=e,\ R^\top e=0\right\}.
\end{equation}
We remind the reader that $\dd(RR^\top)=e$ is equivalent to $\|e_i^\top R\|_2^2=1$ for all
 $i\in [n].$ {We call a point $R\in \B_{n,r}$ smooth (nonsingular) point if the linear independent constraint qualification (LICQ) holds at $R$. Otherwise we say that $R$ is nonsmooth (singular). Strictly speaking, LICQ is a sufficient but not necessary condition for a point to be smooth. However, we identify smoothness and LICQ because LICQ is convenient for us to use in optimization.}
 
Observe that $\B_{n,r}$ is an algebraic variety defined as the common zeros of $n+r$ polynomials (see \cite{AlgV,agtan}). Its tangent space {at a smooth point} $R\in \B_{n,r}$ is given by
\begin{equation}\label{BCtan}
T_{R}\B_{n,r}:= \left\{ H\in \R^{n\times r}:\  H^\top e=0,\ \dd(HR^\top)=0\right\}.
\end{equation}
\subsection{Local geometric properties of $\B_{n,r}$}
From now on, we let $E^n:=\left\{ v\in \{-1,1\}^n:\ e^\top v=0\right\}.$
Note that $\B_{n,r}$ is not always a smooth manifold and we have the following decomposition result.

\begin{prop}\label{decomp}
$\B_{n,r}=\B_{n,r}^0\sqcup \B_{n,r}^1$, {where $\sqcup$ denotes the disjoint union of the sets} $\B_{n,r}^0:=\left\{ vb^\top:\ v\in E^n,\ b\in {\rm S}^{r-1}\right\}$ and 
$\B^1_{n,r}:=\left\{R\in \B_{n,r}:\ \rr\left(R\right)\geq 2\right\}.$ Moreover, $\B_{n,r}^0$ contains the singular points of $\B_{n,r}$ and $\B_{n,r}^1$ contains the smooth points of $\B_{n,r}.$ {Note that when $n$ is odd, $\B^0_{n,r}=\emptyset.$}
\end{prop}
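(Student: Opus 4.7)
The plan is to handle the set-theoretic decomposition and the smoothness claim separately, and throughout to work directly with LICQ on the explicit constraint functions $f_i(R) := \|e_i^\top R\|_2^2 - 1$ for $i\in[n]$ and $g_j(R) := e^\top R e_j$ for $j\in[r]$, whose gradients in $\R^{n\times r}$ are $\nabla f_i(R) = 2 e_i e_i^\top R$ and $\nabla g_j(R) = e e_j^\top$.

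For the decomposition, I would first observe that any $R\in \B_{n,r}$ has $\rr(R)\geq 1$, since $\dd(RR^\top)=e\neq 0$. Splitting by whether $\rr(R)=1$ or $\rr(R)\geq 2$ gives the disjoint union. It remains to identify the rank-one stratum with $\B_{n,r}^0$. Writing $R=vb^\top$ with $v\in\R^n$, $b\in\R^r$ both nonzero, the condition $\dd(RR^\top)=e$ reads $v_i^2 \|b\|_2^2 = 1$ for all $i$, forcing $|v_i|$ to be constant; normalising so that $b\in S^{r-1}$ forces $v\in\{-1,1\}^n$. The constraint $R^\top e = b (e^\top v) = 0$ with $b\neq 0$ then gives $e^\top v=0$, i.e.\ $v\in E^n$. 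Conversely every such $vb^\top$ lies in $\B_{n,r}$. The parity remark for odd $n$ is immediate since $e^\top v=0$ forces $n$ even.

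For the singularity of points in $\B_{n,r}^0$, write $R=vb^\top$ and suppose $\sum_i \alpha_i \nabla f_i(R) + \sum_j \beta_j \nabla g_j(R) = 0$, i.e.\ $\DD(\alpha)\,R + e\beta^\top = 0$. Row $i$ gives $\alpha_i v_i b^\top = -\beta^\top$, so every row of the left-hand side is proportional to $b^\top$. Picking $\lambda\in\R$ and setting $\alpha = \lambda v$, $\beta = -\lambda b$ (using $v_i^2=1$) yields a nontrivial solution, so the $n+r$ constraint gradients are linearly dependent; hence LICQ fails and $R$ is a singular point.

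For the smoothness of points in $\B_{n,r}^1$, assume $\rr(R)\geq 2$ and suppose $\DD(\alpha) R + e\beta^\top = 0$. Row $i$ now reads $\alpha_i (e_i^\top R) = -\beta^\top$. If some $\alpha_i=0$, then $\beta=0$, and then the same identity forces $\alpha_j (e_j^\top R) = 0$ for all $j$; since every row of $R$ has unit norm (hence is nonzero), this yields $\alpha=0$. If instead all $\alpha_i\neq 0$, then every row of $R$ is a scalar multiple of $\beta^\top$, so $\rr(R)\leq 1$, contradicting $\rr(R)\geq 2$. Either way $(\alpha,\beta)=0$, so LICQ holds and $R$ is smooth.

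The only mildly subtle step is the rank-$\geq 2$ case, where one must avoid the pitfall of simply asserting that the rows are linearly independent (they are not when $r<n$); the point is that two linearly independent rows suffice to rule out all rows being proportional to a common vector $\beta^\top$. Everything else is bookkeeping.
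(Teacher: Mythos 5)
Your proposal is correct and follows essentially the same route as the paper: the paper phrases LICQ as injectivity of the adjoint $\A_R^*(\lambda,\mu)=e\lambda^\top+\dd(\mu)R$, which is exactly your linear combination $\DD(\alpha)R+e\beta^\top$ of the constraint gradients, and its case split (some $\mu_i=0$ versus all $\mu_i\neq 0$, the latter forcing rank one) matches yours. The explicit kernel vector you exhibit at a rank-one point is the same one the paper uses, namely $(\lambda,\mu)=(-b,v)$ up to scaling.
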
 
\begin{proof}
The decomposition comes from the fact that any $R\in \B_{n,r}$ with rank equal to 1 has the form $R=vb^\top,$ for some $v\in E^n$ and $b\in {\rm S}^{r-1}$. For any $R\in \B_{n,r}$, define the linear mapping
 $\A_R:\R^{n\times r}:\ \rightarrow \R^r\times \R^n,$ such that for any $H\in \R^{n\times r}$, $\A_R(H)=\left(H^\top e,\dd(HR^\top)\right)$. Then for any $(\lambda,\mu)\in \R^r\times \R^n$, $\A_R^*\left( (\lambda,\mu)\right)=e\lambda^\top+\dd(\mu)R.$ It is easy to see that LICQ is equivalent to that $\A_R^*$ is an injection. For $R= vb^\top\in \B_{n,r}^0,$ we have that $\A_R^*\((-b,v)\)=0.$ So, $R$ is singular. For any $R\in \B_{n,r}^1$, we only have to prove that $\A_R^*$ is an injection. Suppose $\A_R^*\left((\lambda,\mu)\right)=e\lambda^\top+\dd(\mu)R=0$. If there exists $i\in [n]$ such that $\mu_i=0$, then $\lambda=0$ and $\dd(\mu)R=0.$  Because $\dd(RR^\top)=e$, we have $\mu=0.$ If for all $i\in [n]$, $\mu_i\neq 0$, then $R=-\dd(\mu)^{-1}ea^\top$, which is rank 1, and we get a contradiction. Therefore, $(\lambda,\mu)=0$ and $\A_R^*$ is injective. We have that $R$ is a smooth point of $\B_{n,r}$.
\end{proof}

{
Proposition~\ref{decomp} shows that the only singular points in $\B_{n,r}$ are rank-1 points. We should note that when using the Burer and Monteiro factorization, $R\in \R^{n\times r}$ is equivalent to $RQ$ for any orthogonal $Q\in \St(r,r).$ This is because $RR^\top=RQQ^\top R^\top.$ Thus, for any singular point $R=vb^\top$ such that $v\in E^n$ and $b\in {\rm S}^{r-1},$ there exists an orthogonal matrix $Q\in \St(r,r)$ such that $RQ=ve_1^\top.$ This implies that there are only finitely many classes of singular points. Moreover, the number of classes is $\frac{1}{2}\binom{n}{\frac{n}{2}}$ if $n$ is even and $0$ if $n$ is odd. For convenience, we only consider singular points of the form $R=ae_1^\top$ for some $a\in E^n.$

Now we move on to characterize the tangent cone and second order tangent set at a singular point of $\B_{n,r}$ in the following two propositions. We defer their proofs to Appendix C. 

\begin{prop}\label{pcone}
Suppose $r>1$ and $R = ae_1^\top\in \B_{n,r}$ is a singular point for some $a\in E^n$, then its tangent cone is given by:
\begin{equation}
\T^i_{\B_{n,r}}(R)=\T_{\B_{n,r}}(R)
= \widetilde{\T}:=\big\{ [0,H_1]:H_1\in \R^{n\times (r-1)},e^\top H_1=0,a^\top \dd\big(H_1H_1^\top\big)=0\big\}.
\label{sgtcone}
\end{equation}
\end{prop}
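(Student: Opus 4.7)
Since $\T^i_{\B_{n,r}}(R) \subseteq \T_{\B_{n,r}}(R)$ holds automatically, the plan is to prove the two inclusions $\T_{\B_{n,r}}(R) \subseteq \widetilde{\T}$ and $\widetilde{\T} \subseteq \T^i_{\B_{n,r}}(R)$, which together pin both cones down to $\widetilde{\T}$.

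For the necessity direction, I would take $h \in \T_{\B_{n,r}}(R)$ and pick sequences $t_k \downarrow 0$ and $R_k = R + t_k h + s_k \in \B_{n,r}$ with $\|s_k\| = o(t_k)$. The constraint $R_k^\top e = 0$ immediately forces $h^\top e = 0$. The row-norm constraint $\|(R_k)_{i,:}\|^2 = 1$, combined with $R_{i,:} = a_i e_1^\top$, yields
\begin{equation*}
2 a_i\bigl(t_k h_{i,1} + s_{k,i,1}\bigr) + \|t_k h_{i,:} + s_{k,i,:}\|^2 = 0.
\end{equation*}
Dividing by $t_k$ and letting $k \to \infty$, the $O(t_k)$ balance forces $h_{i,1} = 0$, so $h = [0, H_1]$ with $e^\top H_1 = 0$. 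Substituting back, the $O(t_k^2)$ balance determines $s_{k,i,1} = -\tfrac{1}{2} a_i t_k^2 \|(H_1)_{i,:}\|^2 + o(t_k^2)$. Summing over $i$ and using $e^\top s_{k,:,1} = 0$ (which follows from $e^\top R_k e_1 = 0$ together with $e^\top a = 0$ and $h_{:,1} = 0$) then extracts $a^\top \dd(H_1 H_1^\top) = 0$, placing $h$ in $\widetilde{\T}$.

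For the sufficiency direction, the plan is to build, for each $h = [0, H_1] \in \widetilde{\T}$, a curve in $\B_{n,r}$ matching $R + t h$ to order $o(t)$. The natural candidate is $R(t) := [A(t),\, t H_1]$ with $A_i(t) := a_i \sqrt{1 - t^2 \|(H_1)_{i,:}\|^2}$, which by construction has unit row norms and zero sums in columns $2$ through $r$. A Taylor expansion gives
\begin{equation*}
e^\top A(t) = -\tfrac{t^2}{2}\, a^\top \dd(H_1 H_1^\top) - \tfrac{t^4}{8} \sum_i a_i \|(H_1)_{i,:}\|^4 + O(t^6) = O(t^4),
\end{equation*}
where both leading orders vanish precisely by the defining conditions of $\widetilde{\T}$ together with $e^\top a = 0$. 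So $R(t)$ is feasible for $\B_{n,r}$ except for an $O(t^4)$ residual in the first column sum.

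The main obstacle is correcting this $O(t^4)$ violation back to zero without leaving the row-norm or remaining column-sum constraints, while moving $R(t)$ by only $o(t)$. My plan is a two-row perturbation: since $a \in E^n$ forces $n$ even with equally many $\pm 1$ entries, I pick indices $i^\pm$ with $a_{i^\pm} = \pm 1$ and modify only rows $i^\pm$ by an in-sphere correction whose tangential part is of size $O(t^3)$; the sphere constraint then produces an accompanying first-coordinate response of size $O(t^4)$ on these two rows, and a single scalar degree of freedom in the tangential perturbation can be tuned to cancel the $O(t^4)$ column-sum error exactly, while the antisymmetric split keeps columns $2,\ldots,r$ unchanged. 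In the degenerate sub-case where no admissible pair $i^\pm$ admits such an adjustment, one checks that $H_1$ must take the rank-one form $a c^\top$ for some $c \in \R^{r-1}$; for such $h$ the explicit rank-one curve $R(t) := a\,[\sqrt{1 - t^2 \|c\|^2},\; t c^\top]$ already lies in $\B_{n,r}$ and matches $R + t h$ up to $O(t^2)$. In either case $\dist(R + t h, \B_{n,r}) = O(t^2) = o(t)$, which yields $h \in \T^i_{\B_{n,r}}(R)$ and closes the argument.
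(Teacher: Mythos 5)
Your proposal is correct in outline, and the first half (the inclusion $\T_{\B_{n,r}}(R)\subset\widetilde{\T}$) is essentially the computation in the paper's Step 1: both arguments take a nearby point of $\B_{n,r}$, balance orders in the row-norm equation to kill the first column of the direction, and then use $e^\top R_k e_1=0$ at order $t_k^2$ to extract $a^\top\dd(H_1H_1^\top)=0$. Where you genuinely diverge is in the sufficiency direction. The paper builds the curve $\widehat{R}(t,W)$ with a full second-order correction $\tfrac{t^2}{2}W$ distributed over all rows, Taylor-expands $e^\top\widehat{R}(t,W)$, and invokes a surjective-differential version of the implicit function theorem to solve $F(t,W(t))=0$; you instead accept the uncorrected curve $[A(t),tH_1]$, observe that the only constraint violation is an $O(t^4)$ defect in the first column sum, and repair it by an antisymmetric $O(t^3)$ tangential perturbation of just two rows $i^{\pm}$ with $a_{i^{\pm}}=\pm1$, tuned by a single scalar. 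Your dichotomy (some pair has $(H_1)_{i^+,:}+(H_1)_{i^-,:}\neq 0$, versus $H_1=ac^\top$) is exactly the paper's Case 1/Case 2 split in disguise, since the linear response of the column sum to the perturbation is $-t\big((H_1)_{i^+,:}+(H_1)_{i^-,:}\big)\cdot\epsilon+O(\|\epsilon\|^2)$, and this vanishes for every admissible pair precisely when $H_1\in a\R^{1\times(r-1)}$. Your route is more elementary and localized (one scalar equation instead of an IFT on $\R^{n\times(r-1)}$); the paper's route is what generalizes cleanly to the second-order tangent sets in Proposition 3.4, where the same machinery is reused.

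The one step you should still write out is the exact cancellation: asserting that "a single scalar degree of freedom can be tuned to cancel the $O(t^4)$ error exactly" needs either the intermediate value theorem applied to $s\mapsto e^\top A_s(t)$ over $s\in[-Mt^3,Mt^3]$ (with the nonvanishing linear coefficient $t\,\big((H_1)_{i^+,:}+(H_1)_{i^-,:}\big)\cdot u$ guaranteeing a sign change for $M$ large and $t$ small), or a one-variable implicit function theorem after dividing out $t^4$. Either completion is routine, but without it the repair step is only a plausibility argument. You should also note explicitly that the chosen pair must satisfy $(H_1)_{i^+,:}+(H_1)_{i^-,:}\neq 0$ (not merely that both rows are nonzero), since otherwise the first-order response degenerates to $O(\|\epsilon\|^2)=O(t^6)$, which is too small to cancel an $O(t^4)$ defect.
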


\begin{prop}\label{sectan}
Suppose $r>1.$ Let $R=ae_1^\top$ be a singular point for some $a\in E^n.$ Then for any $H=[0,H_1]\in \T^i_{\B_{n,r}}(R)=\T_{\B_{n,r}}(R).$ 
\begin{itemize}
\item[(i)] If $H_1\notin a\R^{1\times (r-1)},$ then
\begin{align}
&\T^{i,2}_{\B_{n,r}}(R,H)=\T^2_{\B_{n,r}}(R,H) =\widetilde{\T}^2(R,H)
\notag \\
& :=\Big\{ \left[-a\circ\dd\big(H_1H_1^\top\big),W_1\right]:\ a^\top \dd\big(H_1W^\top_1\big)=0, \,
e^\top W_1=0, \,W_1  \in \R^{n\times (r-1)}\Big\}.\notag
\end{align}
\item[(ii)] If $H_1=a\lambda^\top$ for some $\lambda\in \R^{r-1},$ then
\begin{align}
&\T^{i,2}_{\B_{n,r}}(R,H)=\T^2_{\B_{n,r}}(R,H)=\widetilde{\T}^2(R,H)
\notag \\
& :=\Big\{ \left[-a\circ\dd\big(H_1H_1^\top\big),W_1\right]:\ a^\top \dd\big(W_1W^\top_1\big)=0, \, 
e^\top W_1=0, \,W_1  \in \R^{n\times (r-1)}\Big\}.\notag
\end{align}
\end{itemize}
\end{prop}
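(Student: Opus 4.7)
The plan is to prove the two nontrivial set inclusions $\T^2_{\B_{n,r}}(R,H)\subseteq\widetilde{\T}^2(R,H)$ (necessity) and $\widetilde{\T}^2(R,H)\subseteq\T^{i,2}_{\B_{n,r}}(R,H)$ (sufficiency); combined with the automatic inclusion $\T^{i,2}_{\B_{n,r}}(R,H)\subseteq\T^2_{\B_{n,r}}(R,H)$, this yields the triple equality stated in the proposition. Throughout I split every matrix $M\in\R^{n\times r}$ as $M=[M^{(0)},M^{(1)}]$ with $M^{(0)}\in\R^{n}$ the first column and $M^{(1)}\in\R^{n\times(r-1)}$ the remaining columns, mirroring the block forms $R=ae_1^\top$, $H=[0,H_1]$ and $w=[w_0,W_1]$.

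For the necessity direction, take $w\in\T^2_{\B_{n,r}}(R,H)$, so by definition there exist $t_k\downarrow 0$ and $y_k\in\B_{n,r}$ with $y_k=R+t_kH+\tfrac{1}{2}t_k^2 w+\rho_k$, where $\rho_k=o(t_k^2)$. From $e^\top y_k=0$ and $e^\top R=e^\top H=0$ I immediately deduce $e^\top w=0$; in particular $e^\top W_1=0$, while $e^\top w_0=0$ is automatic from the tangent-cone condition $a^\top\dd(H_1H_1^\top)=0$. Expanding the row-norm constraint $\|e_i^\top y_k\|^2=1$ to order $t_k^2$ and using $a_i^2=1$ then forces $(w_0)_i=-a_i[\dd(H_1H_1^\top)]_i$, so $w_0=-a\circ\dd(H_1H_1^\top)$.

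The heart of necessity is the next-order expansion. Solving the row-norm equation for the first-column residual $(\rho_k^{(0)})_i$, summing over $i$, and using $e^\top\rho_k^{(0)}=0$ and $e^\top\rho_k^{(1)}=0$ coming from $e^\top y_k=0$, the leading term in the resulting identity is proportional to $a^\top\dd(H_1W_1^\top)\cdot t_k^3$; in case (i) this coefficient must vanish and we obtain $a^\top\dd(H_1W_1^\top)=0$. In case (ii) the substitution $H_1=a\lambda^\top$ turns $a^\top\dd(H_1W_1^\top)$ into $\lambda^\top W_1^\top e$, which is already zero, so the analysis must be pushed to order $t_k^4$; the analogous cross-term $\sum_i a_i[\dd(H_1(\rho_k^{(1)})^\top)]_i$ likewise collapses to $\lambda^\top(\rho_k^{(1)})^\top e=0$, and together with $\sum_i a_i=0$ (which kills the uniform $\|e_i^\top H_1\|^4$ contribution) the remaining coefficient is exactly $a^\top\dd(W_1W_1^\top)$, giving the case-(ii) constraint.

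For sufficiency, given $w\in\widetilde{\T}^2(R,H)$ I would construct an explicit curve $y(t)\in\B_{n,r}$ with $y(t)=R+tH+\tfrac{1}{2}t^2w+o(t^2)$. Set $M(t):=R+tH+\tfrac{1}{2}t^2w$; by the choice of $w_0$, each row of $M(t)$ has norm $1+O(t^3)$ while $M(t)^\top e$ is identically zero. Row-normalizing yields $\tilde M(t)=M(t)+O(t^3)$ with exactly unit rows, and a direct computation using the algebraic condition $a^\top\dd(H_1W_1^\top)=0$ (respectively $a^\top\dd(W_1W_1^\top)=0$) shows that $\tilde M(t)^\top e=o(t^2)$. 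A rank-one correction $y_0(t):=\tilde M(t)-\tfrac{1}{n}e\big(\tilde M(t)^\top e\big)^\top$ then restores $e^\top y_0(t)=0$ exactly while only perturbing row norms by $o(t^2)$, and one final row-renormalization---or equivalently an implicit-function-theorem argument in a neighborhood of a smooth point of $\B_{n,r}^1$ for case (i) using Proposition~\ref{decomp}, and on the smooth rank-one stratum $\B_{n,r}^0$ for case (ii)---absorbs the remaining $o(t^2)$ residual. The main obstacle is precisely this orchestration: naive row-scaling destroys the column-sum condition and vice versa, and the extra algebraic constraints appearing in the proposition are exactly what lift each naive residual to an order high enough to be swallowed by the $o(t^2)$ remainder.
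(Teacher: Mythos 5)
Your necessity half ($\T^2_{\B_{n,r}}(R,H)\subseteq\widetilde{\T}^2(R,H)$) is essentially the paper's Step 1: project onto $\B_{n,r}$, exploit $e^\top y_k=0$ together with the unit-row constraints, and read off $w_0$, $e^\top W_1=0$ and the cross-conditions from the $t_k^2$, $t_k^3$ and (in case (ii)) $t_k^4$ coefficients; your disposal of the residual cross-term $\sum_i a_i[\dd(H_1(\rho_k^{(1)})^\top)]_i$ in case (ii) is exactly how the paper kills its $g(t_k)$ term. That direction is sound.

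The sufficiency half has a genuine gap. Your alternating scheme (row-normalize, subtract $\frac1n e(\tilde M(t)^\top e)^\top$, renormalize, \dots) is never shown to converge to a point of $\B_{n,r}$, and at leading order it does not: if $N\approx ae_1^\top$ has zero column sums and row-norm-squared defect vector $\epsilon$, one full cycle sends $\epsilon$ to approximately $\frac1n NN^\top\epsilon\approx\frac1n aa^\top\epsilon$, which is the orthogonal projection of $\epsilon$ onto ${\rm span}(a)$ (note $\|a\|^2=n$); this map has operator norm $1$, so the defect stalls rather than contracts. Stopping after finitely many cycles leaves a point whose constraint residual is small but which is not in $\B_{n,r}$, and converting a small residual into a small distance to $\B_{n,r}$ is precisely the error bound that fails to be automatic at a singular point --- it is essentially the statement being proved. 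Your fallback, an IFT argument "in a neighborhood of a smooth point of $\B_{n,r}^1$", does not apply either: $M(t)\to R=ae_1^\top$ as $t\to0$ and $R$ is singular, i.e.\ LICQ for the defining equations fails exactly there, which is why this proposition needs a separate proof at all.

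The paper's construction circumvents both problems: it parametrizes candidate curves as $\widehat R(t,X)$ with the first column chosen as $\dd(a)$ times the componentwise square root of $e$ minus the row norms of the remaining block, so the unit-row constraints hold \emph{exactly} by construction, and it leaves a free third-order perturbation $t^3X$ in the last $r-1$ columns. The single remaining constraint $e^\top\widehat R(t,X)=0$ is then reduced, after dividing out the leading powers of $t$ (this division is where $a^\top\dd(H_1W_1^\top)=0$, resp.\ $a^\top\dd(W_1W_1^\top)=0$, is consumed), to a map $F(t,X)$ to which the implicit function theorem applies; surjectivity of $\D_XF(0,\cdot)$ holds precisely when $H_1\notin a\R^{1\times(r-1)}$ (case (i)) or $W_1\notin a\R^{1\times(r-1)}$ (a sub-case of (ii)), and the doubly rank-one sub-case $H_1=a\lambda_1^\top$, $W_1=a\lambda_2^\top$ needs its own explicit curve with $X\equiv0$. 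Your sketch contains neither this reduction nor the sub-case split inside case (ii), so the inclusion $\widetilde{\T}^2(R,H)\subseteq\T^{i,2}_{\B_{n,r}}(R,H)$ is not established as written.
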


Proposition~\ref{pcone} and \ref{sectan} together imply that $\B_{n,r}$ is second order directionally differentiable at every singular point (see definition 3.32 of \cite{Perturbation}). With these geometric properties, we can move on to study operations on $\B_{n,r},$ which are crucial for designing algorithms. We will first discuss some important operations on the smooth part of $\B_{n,r}$ such as projection and retraction in the next two sections, while 
the discussion on how to handle the singular points will be given later in Appendix B 
since we rarely encounter singular points in practical computations.
}

\subsection{Projection onto tangent spaces {of smooth points on} $\B_{n,r}$}

For notational convenience, let $\P_{R}(C):=\P_{T_{R}\B_{n,r}}(C).$
\begin{theo}
For any smooth point $\barR\in \B_{n,r}$ and $C\in \R^{n\times r},$
\begin{equation}\label{BCproj}
\P_{\barR}(C)=J\left(C-\dd\left(\lambda_C^{\barR}\right)\barR\right),
\end{equation}
where 
\begin{equation}\label{lamb}
\lambda_C^{\barR}:=\Big(I+\barR\left( I-\barR^\top \barR/n\right)^{-1} \barR^\top/n\Big)\dd\big( JC\barR^\top\big).
\end{equation}
\end{theo}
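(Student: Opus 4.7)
The plan is to compute $\P_{\barR}(C)$ as the unique solution of the convex quadratic program $\min\{\tfrac12\|H-C\|_F^2:\ H^\top e=0,\ \dd(H\barR^\top)=0\}$, in which uniqueness follows from strict convexity of the objective and smoothness of $\barR$ (so that the feasible set is a genuine affine subspace). I would introduce Lagrange multipliers $\lambda\in\R^r$ for $H^\top e=0$ and $\mu\in\R^n$ for $\dd(H\barR^\top)=0$, and, after noting the adjoint identity $\inprod{\mu}{\dd(H\barR^\top)}=\inprod{\dd(\mu)\barR}{H}$, read off the stationarity condition
$$H \;=\; C+e\lambda^\top+\dd(\mu)\barR.$$

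Next, I would plug this $H$ into the two constraints. Using $\barR^\top e=0$, $\dd(\barR\barR^\top)=e$, and the elementary diagonal identities $\dd(e\lambda^\top\barR^\top)=\barR\lambda$ and $\dd(\dd(\mu)\barR\barR^\top)=\mu$, the two constraints reduce to the linear system
$$C^\top e + n\lambda + \barR^\top\mu = 0, \qquad \dd(C\barR^\top)+\barR\lambda+\mu=0.$$
Eliminating $\lambda=-(C^\top e+\barR^\top\mu)/n$ and using the small identity $\dd(C\barR^\top)-\barR C^\top e/n=\dd(JC\barR^\top)$ (which is immediate from $\dd(ee^\top C\barR^\top)=\barR C^\top e$) collapses the system to a single Schur-type equation
$$\bigl(I-\barR\barR^\top/n\bigr)\mu \;=\; -\dd(JC\barR^\top).$$

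To invert $I-\barR\barR^\top/n$ at a smooth $\barR$, I would argue as follows: since $\barR\barR^\top\succeq 0$ has trace $n$ and diagonal $e$, its largest eigenvalue equals $n$ only when $\barR\barR^\top=nvv^\top$, which forces $\rr(\barR)=1$ and is excluded by Proposition~\ref{decomp}. The same reasoning shows that $I-\barR^\top\barR/n$ is invertible, so the Sherman--Morrison--Woodbury identity
$$\bigl(I-\barR\barR^\top/n\bigr)^{-1} \;=\; I+\barR\bigl(I-\barR^\top\barR/n\bigr)^{-1}\barR^\top/n$$
identifies $\mu=-\lambda_C^{\barR}$ with $\lambda_C^{\barR}$ given by (\ref{lamb}). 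Back-substituting into $H=C+e\lambda^\top+\dd(\mu)\barR$ and using $ee^\top\dd(\lambda_C^{\barR})=e(\lambda_C^{\barR})^\top$ rearranges to $H=J\bigl(C-\dd(\lambda_C^{\barR})\barR\bigr)$, which is exactly (\ref{BCproj}).

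The only conceptually nontrivial ingredient is the invertibility of $I-\barR\barR^\top/n$, which is exactly where the smoothness hypothesis on $\barR$ enters through Proposition~\ref{decomp}; everything else is routine bookkeeping, with the main potential pitfall being the careful tracking of $\dd(\cdot)$ applied to products like $e\lambda^\top\barR^\top$ and $\dd(\mu)\barR\barR^\top$, and the small identity relating $\dd(C\barR^\top)$ to $\dd(JC\barR^\top)$ that casts the right-hand side into precisely the form to which Woodbury applies.
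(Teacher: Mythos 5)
Your proposal is correct and follows essentially the same route as the paper: set up the strongly convex QP, write the KKT stationarity condition with multipliers for the two linear constraints, reduce to the Schur-type equation $(I-\barR\barR^\top/n)\mu=-\dd(JC\barR^\top)$, invoke the trace/rank argument for invertibility at a smooth point, and apply Sherman--Morrison--Woodbury. The only cosmetic difference is that the paper eliminates the multiplier for $H^\top e=0$ by left-multiplying the stationarity equation by $J$, whereas you solve the $2\times 2$ linear system in $(\lambda,\mu)$ by hand; both eliminations yield the identical equation.
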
 
\begin{proof}
Suppose $\barR\in \B_{n,r}^1$, we have
\begin{equation}\label{BCproj1}
\P_{\barR}(C):=\arg\min_{H\in \R^{n\times r}}\left\{ \frac{1}{2}\|H-C\|^2:\ H^\top e=0,\ \dd(H\barR^\top)=0\right\}.
\end{equation}
Since, the feasible region in (\ref{BCproj1}) is convex and the objective function is strongly convex, it has a unique optimal solution, say $H.$ From LICQ property of $R$, $H$ also satisfies LICQ. Then $H$ must be a KKT solution, that is, there exists $\lambda_1\in \R^r$, $\lambda_2\in \R^n$ such that
\begin{align}
H-C+e\lambda_1^\top+\dd(\lambda_2)\barR=0\label{eq1}\\
H^\top e = 0\label{eq2}\\
\dd(H\barR^\top)=0\label{eq3}
\end{align}

Multiplying $J$ to the left hand side of (\ref{eq1}), and noting that $e^\top H=0$, we have 
$$ H=J(C-\dd(\lambda_2)\barR).$$ 
Plug this into (\ref{eq3}), we have 
\begin{equation}\label{eq4} 
\dd\big( J\dd(\lambda_2)\barR\barR^\top\big)=\dd\big(JC\barR^\top\big).
\end{equation} 
Apply the following formula to (\ref{eq4})
\begin{equation}\label{interesting}
\dd\left( A\dd(\lambda)B\right)=\big(A\circ B^\top\big) \lambda,
\end{equation}
we have 

\begin{equation}\label{eq5}
 \left(I-\barR\barR^\top/n\right)\lambda_2=\dd\big( JC\barR^\top\big)
\end{equation}
Since $\barR\barR^\top/n\succeq 0$ and ${\rm Tr}(\barR\barR^\top/n)=
{\rm Tr}(I/n)=1,$ we have $\|\barR\barR^\top/n\|_2\leq 1$ and the equality holds if and only if $\barR$ is rank 1. Because $\rr(\barR)\geq 2$, we must have $I-\barR\barR^\top/n\succ 0.$  From the Sherman-Morrison-Woodbury formula, we have 
\begin{equation}\label{lam}
 \lambda_2=\Big(I+\barR\left( I-\barR^\top \barR/n\right)^{-1}\barR^\top/n\Big)\dd\big( JC\barR^\top\big),
 \end{equation}
which is exactly $\lambda_C^{\barR}.$
\end{proof}

\subsection{Retraction around smooth points of $\B_{n,r}$}
\label{subsec-retraction}

For $\barR\in \B_{n,r}$ and $H\in T_{\barR}\B_{n,r}$, define 
\begin{equation}\label{BCretrac}
\Re_{\barR}(H):=\P_{\B_{n,r}}(\barR+H)\in \arg\min\left\{ \|R-(\barR+H)\|^2/2:\ R\in \B_{n,r}\right\}.
\end{equation}
Since $\B_{n,r}$ is compact, 
the optimization problem (\ref{BCretrac}) always has an optimal solution, but it may not be unique as $\B_{n,r}$ is non-convex. Now, we focus on the following optimization problem. Given $Y\in \R^{n\times r},$
\begin{equation}\label{BCopt}
\min\Big\{ \frac{1}{2}\|R-Y\|^2:\ R\in \B_{n,r}\Big\}.
\end{equation}

For $Y_1,Y_2\in \R^{n\times r}$, we say $Y_1\sim Y_2$ if there exists $b\in \R^r$ such that $Y_1=Y_2+eb^\top.$ It is easy to check that $\sim$ is an equivalence relation, which partitions $\R^{n\times r}$ into equivalence classes $[Y]:=\left\{Y_1\in \R^{n\times r}:\ Y_1\sim Y\right\}.$

\begin{lem}\label{equi}
For $Y\in \R^{n\times r}$, and $Y_1\in [Y],$ 
\begin{equation}\label{equi1}
\arg\min\left\{ \|R-Y\|^2/2:\ R\in \B_{n,r}\right\}=\arg\min\left\{ \|R-Y_1\|^2/2:\ R\in \B_{n,r}\right\}.
\end{equation}
\end{lem}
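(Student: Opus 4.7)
The plan is to exploit the defining constraint $R^\top e = 0$ of $\B_{n,r}$ to show that the two objective functions in \eqref{equi1} differ only by a constant (independent of $R$), so they must share the same minimizers.

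First I would write $Y_1 = Y + eb^\top$ for some $b \in \R^r$ using the definition of the equivalence relation $\sim$. Then for any $R \in \B_{n,r}$, I expand
$$\|R - Y_1\|^2 = \|R - Y - eb^\top\|^2 = \|R - Y\|^2 - 2\langle R - Y, eb^\top\rangle + \|eb^\top\|^2.$$

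Next I would observe that $\langle R, eb^\top\rangle = \Tr(b e^\top R) = b^\top (R^\top e) = 0$ since $R \in \B_{n,r}$ forces $R^\top e = 0$. Hence
$$\|R - Y_1\|^2 = \|R - Y\|^2 + 2\langle Y, eb^\top\rangle + \|eb^\top\|^2,$$
where the two added terms are independent of $R$. Consequently, minimizing $\|R - Y_1\|^2/2$ over $\B_{n,r}$ is equivalent to minimizing $\|R - Y\|^2/2$ over $\B_{n,r}$, and the two argmin sets coincide.

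There is no real obstacle here; the only subtlety is the compatibility of the shift $eb^\top$ with the constraint $R^\top e = 0$, which is exactly what makes the inner product term vanish. This lemma will be useful later since it allows us to always replace $Y$ in \eqref{BCopt} by its canonical representative $JY = Y - ee^\top Y/n \in [Y]$ (which satisfies $e^\top(JY) = 0$) before solving the retraction subproblem.
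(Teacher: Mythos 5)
Your proof is correct and follows exactly the same route as the paper's: write $Y_1 = Y + eb^\top$, use $R^\top e = 0$ to kill the cross term, and conclude the two objectives differ by a constant independent of $R$. The only difference is that you spell out the expansion explicitly, which the paper leaves implicit.
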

\begin{proof}
Since $Y_1\in [Y]$, $Y_1=Y+eb^\top$ for some $b\in \R^r$. Because for any $R\in \B_{n,r}$, $R^\top e=0$, the two objective functions in (\ref{equi1}) only differs by a constant. Thus, the optimal solutions set are the same.
\end{proof}

Now, we move on to show that problem (\ref{BCopt}) is related to the well known geometric median problem. This is the key point for solving (\ref{BCopt}) efficiently. For $Y\in \R^{n\times r}$ such that every row is non-zero, define the mapping 
$\O:\R^{n\times r}\rightarrow \R^{n\times r}$ such that $\O(Y):=Y\(\DD(YY^\top)\)^{-\frac{1}{2}},$ which is derived from $Y$ by normalizing each row of $Y$ to have unit $2$-norm. For any $Y\in \R^{n\times r},$ define $$\F(Y):=\{\O(Y_1):\, Y_1\in [Y]\}\,\cap\, \B_{n,r}.$$

\begin{lem}\label{easy}
For any $Y\in \R^{n\times r}$, either $\F(Y)=\emptyset$ or 
$ \F(Y)=\arg\min\left\{ \frac{1}{2}\|R-Y\|^2:\, R\in \B_{n,r}\right\}$ and $|\F(Y)|=1.$
\end{lem}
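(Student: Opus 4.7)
The plan is to show, under the hypothesis that $\F(Y)\neq\emptyset$, that every element of $\F(Y)$ is the \emph{unique} minimizer of $\tfrac12\|R-Y\|^2$ over $\B_{n,r}$; this single uniqueness statement then gives both $|\F(Y)|=1$ and equality with the argmin simultaneously.

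First I would pick any representative $Y_1\in[Y]$ with $\O(Y_1)\in\B_{n,r}$ and invoke Lemma~\ref{equi} to replace $Y$ by $Y_1$ in the projection problem (\ref{BCopt}) without changing the set of minimizers. The key move is then to \emph{relax} the affine constraint $R^\top e=0$ and minimize $\tfrac12\|R-Y_1\|^2$ over the larger set $S:=\{R\in\R^{n\times r}:\dd(RR^\top)=e\}\supseteq\B_{n,r}$. Over $S$ the problem decouples completely across rows, since $\tfrac12\|R-Y_1\|^2=\tfrac12\sum_{i=1}^n\|e_i^\top R-e_i^\top Y_1\|^2$ and the constraints $\|e_i^\top R\|=1$ are independent across $i$. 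Because $\O(Y_1)$ is defined, every row $e_i^\top Y_1$ is nonzero, so each row subproblem $\min\{\|u-e_i^\top Y_1\|^2:\|u\|=1\}$ has the unique minimizer $u=e_i^\top Y_1/\|e_i^\top Y_1\|$, namely the $i$-th row of $\O(Y_1)$. Hence $\O(Y_1)$ is the unique minimizer of $\tfrac12\|R-Y_1\|^2$ over $S$, and since by hypothesis $\O(Y_1)\in\B_{n,r}\subseteq S$, it remains the unique minimizer on the smaller set $\B_{n,r}$. Applying the same argument to any second representative $Y_2\in[Y]$ with $\O(Y_2)\in\B_{n,r}$ forces $\O(Y_2)=\O(Y_1)$, which concludes both claims.

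The only conceptual subtlety --- and the reason the relaxation-then-decouple trick works --- is the $eb^\top$ shift built into the equivalence $\sim$: it simultaneously leaves the objective on $\B_{n,r}$ unchanged (Lemma~\ref{equi}) and allows the simple row-normalization $\O(Y_1)$ to land inside the affine subspace $R^\top e=0$. Once this shift is absorbed, no convexity of $\B_{n,r}$ is required; the problem collapses to $n$ independent nearest-point-on-a-sphere problems, each uniquely solvable thanks to the non-vanishing of the rows of $Y_1$. I do not anticipate any genuine obstacle: the entire argument is one invocation of Lemma~\ref{equi} followed by elementary row-wise projection.
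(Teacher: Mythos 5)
Your proposal is correct and is essentially the paper's own argument: your relaxed set $S=\{R:\dd(RR^\top)=e\}$ is exactly the oblique manifold $\OB_{n,r}$, and the row-wise decoupling you describe is precisely the paper's observation that $\O(Y_1)=\P_{\OB_{n,r}}(Y_1)$ is the unique nearest point in $\OB_{n,r}\supseteq\B_{n,r}$, combined with Lemma~\ref{equi} to pass between representatives of $[Y]$. No gaps; nothing further needed.
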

\begin{proof}
Suppose $\F(Y)\neq \emptyset$. Then there exists $Y_1\in [Y]$ such that $\O(Y_1)\in \B_{n,r}.$ Because $\O(Y_1)=\P_{\OB_{n,r}}(Y_1)$ and $\B_{n,r}\subset \OB_{n,r},$ 
we have $\{\O(Y_1)\}=\arg\min\left\{ \frac{1}{2}\|R-Y_1\|^2 :\, R\in \B_{n,r}\right\}.$ From Lemma~\ref{equi}, $\{\O(Y_1)\}=\arg\min\left\{ \frac{1}{2}\|R-Y\|^2:\, R\in \B_{n,r}\right\}.$ From the arbitrariness of $Y_1$, we have $\F(Y)\subset \arg\min\left\{\frac{1}{2}\|R-Y\|^2:\, R\in \B_{n,r}\right\}$. Thus,
$$\O(Y_1)\in \F(Y)\subset \arg\min\Big\{\frac{1}{2}\|R-Y\|^2:\, R\in \B_{n,r}\Big\}=\{\O(Y_1)\},$$
and the proof is completed.
\end{proof}

From Lemma ~\ref{easy}, we know that if $\F(Y)\neq \emptyset$, then $\F(Y)= \P_{\B_{n,r}}(Y).$ The next proposition gives a sufficient and necessary condition for $\F(Y)\neq \emptyset.$

\begin{prop}\label{sl}
For any $Y\in \R^{n\times r}$, $\F(Y)\neq \emptyset$ if and only if the following geometric median problem,
\begin{equation}\label{geo}
\min\Big\{ f(b):=\sum_{i=1}^n\|y_i-b\|_2:\ b\in \R^r\Big\},
\end{equation}
has an optimal solution $b_0\notin \{y_i : i\in [n]\}.$ Here, $y_i^\top$ is the $i$th row of $Y.$
\end{prop}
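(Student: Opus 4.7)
My plan is to reduce the condition $\F(Y)\neq\emptyset$ to the first-order optimality condition for the convex function $f(b)=\sum_{i=1}^{n}\|y_i-b\|_2$, and then use convexity to translate between stationary points and global minima.

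The first step is unwinding the definition. An element of $\F(Y)$ has the form $\O(Y_1)$ with $Y_1\in[Y]$, meaning $Y_1=Y-eb^\top$ for some $b\in\R^r$ (using the relation $Y_1\sim Y$ with a relabeling of sign). The $i$-th row of $Y_1$ is $y_i-b$, and for $\O(Y_1)$ to be defined we need every row nonzero, i.e.\ $b\notin\{y_1,\ldots,y_n\}$. In that case the $i$-th row of $\O(Y_1)$ is $(y_i-b)/\|y_i-b\|_2$, which automatically lies on the unit sphere, so $\dd(\O(Y_1)\O(Y_1)^\top)=e$ is automatic. The remaining condition $\O(Y_1)^\top e=0$ becomes
\begin{equation}\label{prop:keyeq}
\sum_{i=1}^{n}\frac{y_i-b}{\|y_i-b\|_2}=0.
\end{equation}
Hence $\F(Y)\neq\emptyset$ if and only if there exists $b\in\R^r\setminus\{y_1,\ldots,y_n\}$ satisfying \eqref{prop:keyeq}.

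The second step is to recognize \eqref{prop:keyeq} as the vanishing gradient equation $\nabla f(b)=0$. Since $b\neq y_i$ for all $i$, each summand $\|y_i-b\|_2$ is smooth at $b$ with gradient $(b-y_i)/\|y_i-b\|_2$, so $\nabla f(b)=-\sum_i (y_i-b)/\|y_i-b\|_2$. Because $f$ is a sum of norms and therefore convex, a point $b$ where $f$ is differentiable and $\nabla f(b)=0$ is a global minimizer of $f$. This proves the $(\Leftarrow)$ direction: if the geometric median problem admits an optimal $b_0\notin\{y_i\}$, then $f$ is differentiable at $b_0$, its gradient at $b_0$ must vanish (even though $f$ has kinks elsewhere, at $b_0$ ordinary calculus applies), and the resulting identity \eqref{prop:keyeq} provides a witness $Y_1=Y-eb_0^\top$ for $\F(Y)\neq\emptyset$.

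For the $(\Rightarrow)$ direction, if $\F(Y)\neq\emptyset$, the above unwinding produces a $b\notin\{y_i\}$ with $\nabla f(b)=0$; again by convexity of $f$, such $b$ is a global minimizer of the geometric median problem, so an optimal solution lying outside $\{y_1,\ldots,y_n\}$ exists. The only technical subtlety I foresee — and the main thing to handle carefully — is the interface with the nonsmooth points $b\in\{y_i\}$: one must note that the \emph{optimality} of $b_0$ in the convex program is equivalent to $0\in\partial f(b_0)$, and that when $b_0\notin\{y_i\}$ this subdifferential reduces to the classical gradient \eqref{prop:keyeq}. Conversely a point satisfying \eqref{prop:keyeq} is optimal because it is a stationary point of a convex differentiable function (restricted to a neighborhood where $f$ is smooth), and convexity upgrades this local information to global optimality. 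This completes both directions.
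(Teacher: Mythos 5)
Your proposal is correct and follows essentially the same route as the paper: unwind the definition of $\F(Y)$ to the stationarity equation $\sum_i (y_i-b)/\|y_i-b\|_2=0$ with $b\notin\{y_i\}$, identify this with $\nabla f(b)=0$ for the convex geometric median objective, and use the equivalence between a vanishing (sub)gradient and global optimality. The paper's proof is just a terser version of exactly this argument.
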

\begin{proof}
Note that (\ref{geo}) is a convex optimization problem, it has an optimal solution 
$b_0\notin \{y_i :i\in [n]\}$ if and only if $$ \nabla f(b)\Big|_{b=b_0}=\sum_{i=1}^n\frac{b_0-y_i}{\|b_0-y_i\|}=0.$$ The latter condition is equivalent to $\F(Y)\neq \emptyset.$
\end{proof}

From Proposition~\ref{sl} and Lemma~\ref{easy}, we may solve (\ref{BCopt}) by solving (\ref{geo}). Thus, a nonconvex problem is reduced to a well studied convex problem. There are many algorithms to solve (\ref{geo}) with guaranteed convergence, one simple method is the Weiszfeld algorithm (\cite{Weiszf}), which performs the following iteration:
\begin{equation}\label{Weiszfeld}
b_{i+1}=\left( \sum_{j=1}^n\frac{y_j}{\|b_i-y_j\|}\right)\Big/\left( \sum_{j=1}^n\frac{1}{\|b_i-y_j\|}\right).
\end{equation}
However, the condition $b_0\notin \{y_i : i\in [n]\}$ in Proposition~\ref{sl} may not always hold. Consider $Y\in \R^{3\times 2}$, whose rows are not collinear (not on the same line), it is well known that the geometric median of $y_1,y_2,y_3$ belongs to $\{y_i:\ i\in [3]\}$ if and only if the triangle $y_1y_2y_3$ has an angle $\geq 120^\circ.$ This means that the condition $b_0\notin \{y_i :i\in [n]\}$ may not hold with probability 1.  
Moreover, the following example show that one cannot find a neighbourhood of  $\B_{n,r}$ such that every $Y$ in it satisfies $\F(Y)\neq \emptyset.$

\begin{exmp}\label{exm}
Consider $\R^{4\times 2}.$ For any $\epsilon>0$, define $Y_\epsilon\in \R^{4\times 2}$ such that $$Y_\epsilon:=\begin{pmatrix} 0&0&0&-\frac{\epsilon}{2}\\1&1-\frac{\epsilon}{2}&-1&-1+\frac{\epsilon}{2}
\end{pmatrix}^\top.$$ We have $\dist\(Y_\epsilon,\B_{n,r}\)<\epsilon$. Because the first three rows of $Y_\epsilon$ are on the same line, the only geometric median of its rows vector is $(0,1-\frac{\epsilon}{2})^\top$, so $\F\(Y_\epsilon\)= \emptyset.$ However, as $\epsilon\rightarrow 0$, $Y_\epsilon$ can be arbitrarily close to $\B_{4,2}.$
\end{exmp}

The following theorem is a sufficient condition for the geometric median method to be useful.

\begin{theo}\label{analysis}
Suppose $Y\in \R^{n,r}$ satisfies $\|Y\|_2<(1-\beta)\sqrt{n}$ for some $0<\beta<1,$ and $\dist\(Y,\B_{n,r}\)<\frac{\beta \sqrt{n}}{\sqrt{n}+1}<1.$ Then $\F(Y)\neq \emptyset.$
\end{theo}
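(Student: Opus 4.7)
The plan is to invoke Proposition~\ref{sl}, which reduces $\F(Y)\neq\emptyset$ to finding a geometric median $b^*$ of the rows $y_1,\ldots,y_n$ with $b^*\notin\{y_1,\ldots,y_n\}$. Writing $F(b):=\sum_i(y_i-b)/\|y_i-b\|$ (the negative gradient of $f(b):=\sum_i\|y_i-b\|_2$), it suffices to produce $b^*$ with $F(b^*)=0$ and $b^*\neq y_i$ for every $i$; by convexity of $f$, any such $b^*$ is automatically a global minimizer.

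The first step is to combine the two hypotheses to constrain the geometry of the nearest point in $\B_{n,r}$. Pick $R\in\B_{n,r}$ realizing $\delta:=\|Y-R\|_F=\dist(Y,\B_{n,r})$, let $r_i^\top$ denote the $i$-th row of $R$, and set $\epsilon_i:=y_i-r_i$. If $R$ were a singular (rank-one) point of $\B_{n,r}$, then by Proposition~\ref{decomp} $\|R\|_2=\sqrt n$, and the reverse triangle inequality would give
\[
\|Y\|_2\geq\|R\|_2-\|Y-R\|_2\geq\sqrt n-\delta>\sqrt n-\frac{\beta\sqrt n}{\sqrt n+1}\geq(1-\beta)\sqrt n,
\]
contradicting $\|Y\|_2<(1-\beta)\sqrt n$ (the last inequality is the elementary $1-\beta/(\sqrt n+1)\geq 1-\beta$ for $\beta\in(0,1)$). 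Hence $R$ is a smooth point, so $\|r_i\|=1$, $\sum_i r_i=0$, $\|R\|_2^2<n$, and the $r\times r$ matrix $\Sigma:=nI-R^\top R$ is strictly positive definite with $\lambda_{\min}(\Sigma)=n-\|R\|_2^2$.

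At the nominal pair $(Y,b)=(R,0)$ one has $F(0)=\sum_i r_i=0$ and the derivative $DF(0)=-\Sigma$ is invertible. For the actual $Y$, Taylor expanding $y_i/\|y_i\|$ around $r_i$ and applying Cauchy--Schwarz gives $\|F(0)\|\leq C_1\sum_i\|\epsilon_i\|\leq C_1\sqrt n\,\delta$; and the bound $\|R\|_2\leq\|Y\|_2+\delta<(1-\beta)\sqrt n+\beta\sqrt n/(\sqrt n+1)$ from the first step yields an explicit positive lower bound on $\lambda_{\min}(\Sigma)$, hence an upper bound on $\|\Sigma^{-1}\|_2$. My plan is to use these two quantitative inputs to set up a contraction for the Newton iteration $b\mapsto b+\Sigma^{-1}F(b)$ on a ball $B(0,\rho)$ with $\rho$ strictly smaller than $\min_i\|y_i\|\geq 1-\delta$, verifying self-mapping via the $F(0)$ estimate and contractivity via a Lipschitz estimate on $DF$ that depends on $\lambda_{\min}(\Sigma)$ and $\min_i\|y_i\|$; the resulting fixed point $b^*\in B(0,\rho)$ satisfies $b^*\neq y_i$ and $F(b^*)=0$, and Proposition~\ref{sl} then gives $\F(Y)\neq\emptyset$. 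The main obstacle is making the Newton--contraction estimates match the precise constants $(1-\beta)$ and $\beta\sqrt n/(\sqrt n+1)$ in the hypotheses, so that the Newton ball $B(0,\rho)$ fits strictly inside $\R^r\setminus\{y_1,\ldots,y_n\}$ and the iteration is a genuine contraction; tracking the explicit dependence on $\beta$ and $n$ through every estimate is the technical heart of the argument.
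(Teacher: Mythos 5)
Your reduction to Proposition~\ref{sl} is legitimate, and your first step (ruling out a rank-one nearest point $R$ via $\|Y\|_2<(1-\beta)\sqrt n$ and the reverse triangle inequality, hence $\Sigma=nI-R^\top R\succ0$) coincides with the opening of the paper's argument. But the core of your proof is not a proof: the Newton--contraction construction of the geometric median is announced, and you yourself flag that ``tracking the explicit dependence on $\beta$ and $n$ through every estimate is the technical heart of the argument'' without carrying it out. That heart is genuinely in doubt. The hypotheses only control $\sum_i\|\epsilon_i\|^2=\delta^2$ with $\delta<\beta\sqrt n/(\sqrt n+1)$, so a single row can have $\|\epsilon_i\|$ close to $\delta$ and hence $\|y_i\|$ as small as roughly $1/(\sqrt n+1)$; your Lipschitz constant for $DF$ scales like $\sum_i\|y_i-b\|^{-2}$ and your constant $C_1$ in $\|F(0)\|\le C_1\sqrt n\,\delta$ scales like $1/\min_i\|y_i\|$, so the Kantorovich-type smallness condition $\|\Sigma^{-1}\|\cdot L\cdot\|\Sigma^{-1}F(0)\|$ being less than a universal constant is not at all evident for the stated range of $\beta$ and $n$, and the Newton ball must in addition fit inside $\{b:\|b\|<\min_i\|y_i\|\}$. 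Until those estimates are closed, the theorem is not proved.

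The paper takes a different and much shorter route that sidesteps the fixed-point construction entirely: it never needs to \emph{build} the median because the projection $R=\P_{\B_{n,r}}(Y)$ already exists by compactness. Having shown $\rr(R)\ge2$, it writes the KKT system of the projection problem, $R-Y+e\mu^\top+\dd(\lambda)R=0$, solves explicitly $\lambda=\bigl(I-RR^\top/n\bigr)^{-1}\dd\bigl(J(Y-R)R^\top\bigr)$, and bounds $\|\lambda\|_2\le\alpha\sqrt n/(\beta-\alpha)<1$ using $\bigl\|\bigl(I-RR^\top/n\bigr)^{-1}\bigr\|_2<\bigl(1-(1-\beta+\alpha)^2\bigr)^{-1}$ and $\|\dd(J(Y-R)R^\top)\|_2<(2-\beta+\alpha)\alpha\sqrt n$. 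Then $I+\dd(\lambda)\succ0$ and $(I+\dd(\lambda))R=Y-e\mu^\top$ force $R=\O(Y-e\mu^\top)$ with $Y-e\mu^\top\in[Y]$, i.e.\ $\F(Y)\neq\emptyset$ directly from the definition of $\F$ --- only a first-order (multiplier) bound is needed, never a second-order Lipschitz estimate on $F$. I would recommend either switching to this KKT argument or, if you want to keep the median route, at minimum replacing the Newton scheme by an argument that exploits the exact stationarity of $R$ rather than perturbing off the nominal configuration $(R,0)$.
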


\begin{proof}
Let $R$ be an optimal solution of (\ref{BCopt}). Then $\|R-Y\|_F=\dist\(Y,\B_{n,r}\).$ 
Let $\|R-Y\|_F=\alpha \sqrt{n}.$ Then we have $\alpha<\frac{\beta}{\sqrt{n}+1}$ and hence
$$
 \beta-\alpha>\alpha\sqrt{n}.
 $$ 
 Moreover $\|R\|_2\leq \|Y\|_2+\|Y-R\|_F<\(1-\beta+\alpha\)\sqrt{n}.$ 
Since any rank-one solution 
 of \eqref{BCopt} must have spectral norm equal to $\sqrt{n}$, the last inequality 
  implies that $\rr\(R\)\geq 2$ 
 and $$\Big\|\(I-RR^\top/n\)^{-1}\Big\|_2\leq \frac{1}{1-\|RR^\top/n\|_2}<\frac{1}{1-\(1-\beta+\alpha\)^2}. $$

By Proposition~\ref{decomp}, $R$ is smooth. So it is a KKT solution of (\ref{BCopt}). There exist $\mu\in \R^r$, $\lambda\in \R^n$ such that the following system holds:
\begin{align}
R-Y+e\mu^\top+\dd(\lambda)R=0 \label{Sep8_1}\\
R^\top e=0 \label{Sep8_2}\\
\dd(RR^\top)=e \label{Sep8_3}
\end{align}
Multiplying $J$ to the left hand side of (\ref{Sep8_1}) and using (\ref{Sep8_2}), we have $$R=J\(Y-\dd(\lambda)R\).$$ Then
$$\dd(RR^\top)=\dd\big( J\(Y-\dd(\lambda)R\)R^\top\big).$$ 
Using (\ref{Sep8_3}) and (\ref{interesting}), we have 
$$ \(I-RR^\top/n\)\lambda =\dd\big(JYR^\top\big)-e.$$
Thus we have that
$$ \lambda =\(I-RR^\top/n\)^{-1}\big(\dd(JYR^\top)-e\big)
=\(I-RR^\top/n\)^{-1}\dd\big(J(Y-R)R^\top\big).
$$ 
Hence
\begin{align}
&\|\lambda\|_2\leq \Big\|\(I-RR^\top/n\)^{-1}\Big\|_2\,
\big\| \dd\big(J(Y-R)R^\top\big)\big\|_2\leq \frac{\big\| \dd\(J(Y-R)R^\top\)\big\|_2}{1-\(1-\beta+\alpha\)^2}.\label{Sep9_1}
\end{align}

Note that we also have
\begin{align}
&\Big\| \dd\big(J(Y-R)R^\top\big)\Big\|_2\leq \big\| \dd\big((Y-R)R^\top\big)\big\|_2
+\big\| R(Y-R)^\top e/n\big\|_2\notag \\[5pt]
&\leq \|Y-R\|_F+\|R\|_2\|Y-R\|_2\|e\|_2/n<\(2-\beta+\alpha\)\alpha\sqrt{n}\label{Sep9_2},
\end{align}
where we used {Cauchy-Schwarz} inequality and $\dd\(RR^\top\)=e$  in the second inequality of (\ref{Sep9_2}). Plug (\ref{Sep9_2}) into (\ref{Sep9_1}), we have 
\begin{equation*}\label{Sep9_3}
\|\lambda\|_2\leq \frac{\alpha\sqrt{n}}{\beta-\alpha}<1.
\end{equation*}
Now, from (\ref{Sep8_1}), we have $\(I+\dd\(\lambda\)\)R=Y-e\mu^\top.$ Since $\|\lambda\|_2<1$, we have $\(I+\dd\(\lambda\)\)\succ 0$. Hence $R=\O(Y-e\mu^\top)$ and so $\F(Y)\neq \emptyset.$
\end{proof}

\medskip
\begin{rem}\label{remretrac}
One may get the impression that the result in Theorem~\ref{analysis} contradicts Example~\ref{exm}. Intuitively speaking, Theorem~\ref{analysis} requires that a point's distance to $\B_{n,r}$ is smaller than its distance to a rank-1 matrix up to some constant, which is not equivalent to saying that the matrix is sufficiently close to $\B_{n,r}.$
Theorem~\ref{analysis} is useful for designing algorithms for
solving the $k-$equipartition problem GEP1. Consider the problem in (\ref{BCretrac}). In practice, $R$ is usually far from being rank-1. Moreover, for $k\geq 3$, the $k-$equipartition problem has the constraints $\|R\|_2\leq \sqrt{\frac{n}{k-1}}.$ If we choose $\beta\in \big(0,\frac{\sqrt{n}+1}{\sqrt{n}+2}\big(1-\frac{1}{\sqrt{k-1}}\big)\big)$ in Theorem~\ref{analysis}
and $H$ to be sufficiently small such that $\dist\(R+H,\B_{n,r}\)\leq \|H\|_F<\frac{\beta\sqrt{n}}{\sqrt{n}+1},$ then we have 
$$
\|Y\|_2=\|R+H\|_2\leq \|R\|_2+\|H\|_2=\sqrt{\frac{n}{k-1}}+\frac{\beta\sqrt{n}}{\sqrt{n}+1}<(1-\beta)\sqrt{n}.
$$ 
This means that, if $\rr\(R\)\geq 2$, we can first compute the retraction 
$\Re_{R}(H)$ by solving (\ref{geo}), with $Y=R+H$. If $\F(Y)\neq \emptyset$, then we have computed the retraction successfully, otherwise, {we can perform} a backtracking step:
$H\leftarrow H/\sigma$ for some $\sigma>1$. Theorem~\ref{analysis} tells us that after a finite number of backtracking steps, $\F(Y)\neq \emptyset$ and the retraction computation
will be successful. Note that the condition $\rr\(R\)\geq 2$ is usually satisfied in our experiments. Moreover, for $k\geq 3$, any feasible $R$ must satisfy $\rr(R) \geq 2$ 
since we have
 $\|R\|_2\leq \sqrt{\frac{n}{k-1}}<\sqrt{n}$. The last inequality rules out 
the possibility for $R$ to have rank one since in that case its spectral norm must be 
equal to $\sqrt{n}$.

Up to now, we are able to compute the retraction of a point only if it is ``smooth". It is an interesting mathematical problem to compute the retraction in the general case. Suppose $Y\in \R^{n\times r}=vb^\top$ such that $b\in {\rm S}^{r-1}$ is a rank-1 matrix. Without loss of generality, assume that
the entries of $v$ are arranged in an ascending order: $v_1\leq v_2\leq \ldots,\leq v_n$. If $n$ is an even number, then $\P_{\B_{n,r}}\(Y\)= wb^\top,$ where $w_1=\ldots=w_{n/2}=-1$, $w_{n/2+1}=\ldots=w_{n}=1.$ This can be proved by first assuming $v_1< v_2< \ldots<v_n$, in which case we can apply Proposition~\ref{sl}. Then use the continuity of $\P_{\B_{n,r}}\(Y\)$ to prove the case for $v_1\leq v_2\leq \ldots\leq v_n.$ 
\end{rem}

\section{Algorithms}
In this section, we consider algorithms to solve \eqref{BCsmo}, with $\A(\cdot)=\dd(\cdot)$ and $\alpha=\frac{n}{k-1}.$ We will discuss the algorithms for minimum bisection and graph multi-equipartition seperately.

\subsection{Minimum bisection}
{
In this subsection, we consider the case $k=2$ and hence $\alpha =n$. In this case, 
since ${\rm diag}(RR^\top) = e$ implies that $\| R \|_F = \sqrt{n}$, the constraint 
$\|R\|_2\leq \sqrt{\alpha}$ is redundant and it can be removed. 
We rewrite the minimum bisection problems as follows:
\begin{equation}\label{BCSDP}
\min\Big\{ \<C,X\>:\ \dd\(X\)=e,\ Xe=0,\ \<X,ee^\top\>=0,\ X\in \S^n_+\Big\},
\end{equation}
\begin{equation}\label{BCsmo}
\min\Big\{ f(R):=\frac{1}{2}\< C,RR^\top\>:\ R\in \B_{n,r}\Big\}.
\end{equation}

From section 3, we know that $\B_{n,r}$ can be viewed as an affine variety with finitely many classes of singular points. In particular, for any $\delta\in (0,1),$ $\B_{n,r}$ can be separated as $\B_{n,r}^{\delta-}$ and $\B_{n,r}^{\delta+}$ such that 
\begin{equation}\label{BC-}
\B_{n,r}^{\delta-}:=\left\{R\in \B_{n,r}:\ \|R\|_2< \sqrt{1-\delta}\sqrt{n}\right\},
\end{equation}
\begin{equation}\label{BC+}
\B_{n,r}^{\delta+}:=\Big\{R\in \B_{n,r}:\ \|R\|_2\geq \sqrt{1-\delta}\sqrt{n}\Big\}.
\end{equation}
For any $\delta\in (0,1),$ all singular points are contained in $\B_{n,r}^{\delta+}.$ If there exists some constant $\delta\in (0,1)$ such that all iterations are in $\B_{n,r}^{\delta-},$ then we can use any kind of Riemannian optimization algorithm to solve (\ref{BCsmo}). Actually, this is often the case in practice. Only in very rare situation  will one encounter a singular point. For example, if the optimal solution of (\ref{BCsmo}) is rank-1, then the iterations will approach a singular point. Note that our escaping strategy  in Appendix B only works for an exact singular point. Thus, we will use a rounding procedure that will round any ``nearly singular point" to a singular point and then apply Theorem~\ref{esctheo} to handle it. For $\delta<\frac{1}{2},$ define the
 function $\Ron:\B_{n,r}^{\delta+}\rightarrow \R^{n\times r}$ such that for any $R\in \B_{n,r}^{\delta+},$ $\Ron(R):=\sgn(u)e_1^\top,$ where $u\in \R^n$ is the singular vector (with 
 its first nonzero component being positive)
 that corresponds to the largest singular value of $R$ and $\sgn$ is the sign function. Note that since $\|R\|_F^2=n$ for $R\in \B_{n,r},$ so when $\delta<\frac{1}{2},$ 
 the leading singular value has multiplicity one and $u$ must be unique.
 Hence $\Ron(.)$ is well-defined. We now state our algorithm. 

\begin{description}
\item [Algorithm 1:] Choose $R_0\in \B_{n,r}$ and $\delta_0\in (0,\frac{1}{2}).$ Set $k=0.$
\item[Step 1.] Set $R_k$ as the initial point, $f_k:=f(R_k).$ Use any Riemannian optimization method to solve (\ref{BCsmo}) such that every iteration $R\in \B_{n,r}$ satisfies $f(R)\leq f_k$. If for some iteration, we obtain a point $\widehat{R}_k\in \B_{n,r}^{\delta_k+},$ goto Step 2.
\item[Step 2.]
{\bf Case 1.} $\Ron\big(\widehat{R}_k\big)\notin \B_{n,r},$ set $R=\widehat{R}_k,$ goto Step 3.\\
{\bf Case 2.} $\Ron\big(\widehat{R}_k\big)\in \B_{n,r}$ and is optimal, stop.\\
{\bf Case 3.} $\Ron\big(\widehat{R}_k\big)\in \B_{n,r}$ and is non-optimal: Use the escaping strategy in Appendix B to find another smooth point $R^+\in \B_{n,r}$ such that $f(R^+)<f(\Ron(\widehat{R}_k))-{\epsilon\big(\Ron(\widehat{R}_k)\big)},$ where 
{$\epsilon\big(\Ron(\widehat{R}_k)\big)>0$} is only related to 
{$\Ron(\widehat{R}_k)$}. If $f(R^+)<f(\widehat{R}_k)$, set $R\leftarrow R^+;$ Otherwise, set $R\leftarrow \widehat{R}_k.$\footnote{In this case, $R$ is also a smooth point. This is because otherwise $f(R^+)<f(\Ron(\widehat{R}_k))=f(\widehat{R}_k)$ and this case won't happen.} Goto Step 3.\\
\item[Step 3.] Set $\delta_{k+1}\leftarrow \delta_k/2$, $R_{k+1}\leftarrow R$,\ $k\leftarrow k+1,$ goto Step 1.\\
\end{description}

In Step 1, many Riemannian optimization methods with linesearch can satisfy $f(R)\leq f_k$ for any iteration $R.$ In practice, we will use a Riemannian gradient descent method with Barzilai-Borwein (BB) step and nonmonotone linesearch (see \cite{BB,adaptive,Intromani,RieBB,BB2}) in Step 1 of Algorithm 1. For $R\in \B_{n,r}$, the Riemannian gradient of a smooth function $f$ is given by $\g f(R):=\P_R\(\nabla f(R)\).$ 
\begin{description}
\item[Riemannian gradient BB method:] 
Given $R_0\in \B_{n,r},\alpha_0$, integer $M\geq 0$, $\gamma\in (0,1)$, $\beta>0,$ $\sigma>0$, $\epsilon>0$. Set $k=0.$
\item{[Step 1]} Compute the  Riemannian gradient $g_k=\g f(R_k)$. If $\|g_k\|=0$, stop.
\item{[Step 2]} If $\alpha_k\leq \epsilon$ or $\alpha_k\geq 1/\epsilon,$ then set $\alpha_k=\beta$
\item{[Step 3]} Set $\tau=1/\alpha_k.$
\item{[Step 4]} (nonmonotone line search)
\\
If $f\big({\rm Proj}_{\B_{n,r}}(R_k-\tau g_k)\big)
\leq \max_{0\leq j\leq \min\{k,M\}}(f_{k-j})-\gamma \tau \inprod{g_k}{g_k}$, set $\tau_k=\tau$, $R_{k+1}={\rm Proj}_{\B_{n,r}}(R_k-\tau_k g_k),$ and goto Step 6;
Otherwise, set $\tau \leftarrow\sigma \tau$ and repeat Step 4.
\item{[Step 5]} Let 
 $y_k=g_k-\P_{R_k}(g_{k-1}).$ Set 
$\alpha_{k+1}=|\inprod{g_k}{y_k}|/(\tau_k \inprod{g_k}{g_k})$ or
  $\inprod{y_k}{y_k}/(\tau_k \inprod{g_k}{g_k}),$ and goto Step 1. 
 \end{description}

\medskip
In our code, we set $\gamma=10^{-4}, \epsilon=10^{-10}, \sigma=0.5$. For the parameter $\delta$, we set
\begin{equation}\label{del}
\beta=\begin{cases}
1&\|g_k\|_F>1,\\
\|g_k\|_F^{-1}\ & 10^{-5}\leq \|g_k\|_F\leq 1\\
10^5\ &\ \|g_k\|_F<10^{-5}
\end{cases}.
\end{equation}
Given a tolerance
${\rm tol}>0$,
we stop our algorithm when ${\|\g f(R_k)\|_F}/({1+\|R_k\|_F})<{\rm tol}$. 

Note that although we use nonmonotone linesearch in the above algorithm, the condition $f(R)\leq f_k$ is still satisfied in Algorithm 1. The next theorem shows that, our algorithm terminates in finite iterations\footnote{Here finite iteration means that the outer loop of algorithm 1 is finite. We consider the whole Riemannian optimization method in step 1 as part of one outer iteration.}.

\begin{theo}\label{Riealg}
For any initial point $R_0\in \B_{n,r}$ and any $\delta_0\in (0,1),$ Algorithm 1 will terminate in finite iterations.
\end{theo}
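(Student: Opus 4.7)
The plan is a proof by contradiction that exploits the descent of $f(R_k)$ together with the fact that $\B_{n,r}^0$, where the trouble can occur, is a finite union of compact orbits on each of which $f$ is constant.

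First I would verify that $\{f(R_k)\}$ is non-increasing. Indeed, the Riemannian method in Step 1 keeps every iterate below $f_k=f(R_k)$, so $f(\widehat R_k)\leq f(R_k)$; in Step 2 either $R_{k+1}=\widehat R_k$ (Case 1) or $R_{k+1}$ is chosen to satisfy $f(R_{k+1})\leq f(\widehat R_k)$ (Case 3 only accepts $R^+$ when $f(R^+)<f(\widehat R_k)$). Since $\B_{n,r}$ is compact and $f$ is continuous, $\{f(R_k)\}$ is bounded below and converges to some $f^\star\in\R$. Now assume for contradiction that the outer loop runs forever. Then $\delta_k=\delta_0\cdot 2^{-k}\downarrow 0$ and each $\widehat R_k\in\B_{n,r}^{\delta_k+}$ satisfies $\|\widehat R_k\|_2^2\geq(1-\delta_k)n$. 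Because $\|\widehat R_k\|_F^2=n$, the Frobenius mass concentrates on the top singular value, and a convergent subsequence $\widehat R_{k_j}\to R^\star\in\B_{n,r}$ must satisfy $\|R^\star\|_2=\|R^\star\|_F=\sqrt n$, so $\rr(R^\star)=1$ and $R^\star\in\B_{n,r}^0$. If $n$ is odd, Proposition~\ref{decomp} forces $\B_{n,r}^0=\emptyset$; then by compactness $\sup_{R\in\B_{n,r}}\|R\|_2<\sqrt n$, so $\B_{n,r}^{\delta_k+}=\emptyset$ once $k$ is large, Step 1 never reaches Step 2, and the outer loop is already finite, contradicting the assumption. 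Hence $n$ is even and $R^\star=ab^\top$ for some $a\in E^n$, $b\in{\rm S}^{r-1}$, with $f(R^\star)=\tfrac12\<C,aa^\top\>$.

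Next I would rule out Case 1 along the tail of the subsequence and extract a uniform drop from Case 3. The leading singular value $\sqrt n$ of $R^\star$ is simple (since $\|R^\star\|_2=\|R^\star\|_F$), and $a$ has every entry equal to $\pm 1$, bounded away from $0$; hence for $j$ large the leading left singular vector $u_j$ of $\widehat R_{k_j}$ is arbitrarily close to $\pm a/\sqrt n$, the sign normalization in $\Ron$ pins down a definite choice, and $\sgn(u_j)\in E^n$. Thus $\Ron(\widehat R_{k_j})=\sgn(u_j)e_1^\top\in\B_{n,r}^0\subset\B_{n,r}$, excluding Case 1. Moreover $f$ is constant on the orbit $\{\sgn(u_j)c^\top:c\in{\rm S}^{r-1}\}$, so $f(\Ron(\widehat R_{k_j}))=\tfrac12\<C,\sgn(u_j)\sgn(u_j)^\top\>=f(R^\star)$. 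If Case 2 ever holds we directly contradict non-termination; otherwise Case 3 applies for all large $j$ and produces $R^+$ with $f(R^+)<f(R^\star)-\epsilon(\Ron(\widehat R_{k_j}))$. Because $\Ron(\widehat R_{k_j})$ takes values in the finite set $\{a'e_1^\top:a'\in E^n\}$, the decrease constants $\epsilon(\Ron(\widehat R_{k_j}))$ are bounded below by some $\epsilon^\star>0$. Continuity of $f$ gives $f(\widehat R_{k_j})\to f(R^\star)$, so $f(R^+)<f(\widehat R_{k_j})$ for $j$ large and the algorithm accepts $R_{k_j+1}=R^+$. The sandwich $f^\star\leq f(R_{k_j+1})\leq f(\widehat R_{k_j})$ combined with $f(\widehat R_{k_j})\to f(R^\star)$ and $f(R_{k_j+1})\to f^\star$ forces $f^\star=f(R^\star)$; then $f(R_{k_j+1})<f^\star-\epsilon^\star$ contradicts $f(R_k)\downarrow f^\star$.

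The main obstacle is that the decrease guaranteed in Case 3 is measured against $f(\Ron(\widehat R_k))$ rather than against the incumbent $f(R_k)$, so a direct telescoping of descent is not possible. The bridge relies on two structural features of $\B_{n,r}^0$: $f$ is constant along each singular orbit, so the continuous limit $\lim_j f(\widehat R_{k_j})$ coincides with the target value $f(\Ron(\widehat R_{k_j}))=f(R^\star)$; and the rounded singular points lie in a finite set, so the associated constants $\epsilon(\cdot)$ admit a uniform positive lower bound. A minor technical fact I would also use is that $\delta_k<\tfrac12$ guarantees simplicity of the leading singular value of $\widehat R_k$, which legitimizes the continuity of $u_j\mapsto\sgn(u_j)$ exploited above.
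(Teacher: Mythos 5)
Your proof is correct and follows essentially the same route as the paper's: argue by contradiction that $\delta_k\to 0$ forces the rounded iterates into the finite set of singular points, then play the guaranteed Case-3 decrease $\epsilon(\cdot)$ against the monotonicity of $f(R_k)$ and the convergence of $f(\widehat R_{k_j})$ to the objective value at the singular point. The only cosmetic differences are that you re-derive the conclusions of the paper's Lemma on the rounding procedure (that $\Ron(\widehat R_k)\in\B_{n,r}$ for small $\delta_k$ and that $f(\widehat R_k)$ approaches $f$ of the rounded singular point) via a compactness/continuity argument on the leading singular vector, and that you use a uniform lower bound $\epsilon^\star$ over the finite set $E^n$ where the paper instead pigeonholes a single singular point visited infinitely often.
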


\begin{proof}
Assume by contradiction that Algorithm 1 doesn't terminate. Then we have that $\delta_k\rightarrow 0.$ From Lemma~\ref{round}, for $k$ sufficiently large, $\Ron\big(\widehat{R}_k\big)\in \B_{n,r}.$ Thus Case 1 and Case 2 in Step 2 will not be triggered when $k$ is large enough. Since there are finite many singular points inside $E^n e_1^\top.$ One of them will be visited infinitely many times, that is, there exists $a\in E^n$ and $\{i_k\}_{k\in \mathbb{N}}\subset \mathbb{N}$ such that $\Ron\big(\widehat{R}_{i_k}\big)=ae_1^\top$ for any $k\in \mathbb{N}.$ Also, $ae_1^\top$ is non-optimal. From (\ref{RRclose}) in Lemma~\ref{round}, we have that $f\big( \widehat{R}_{i_k}\big)\rightarrow f\(ae_1^\top\).$ Note that from the condition $f(R)\leq f_k$ in Step 1 and Case 3 in Step 2, we have that $f_{k+1}\leq f_k.$ Therefore, for $k$ sufficiently large, $f_k\leq f(ae_1^\top)-\epsilon(ae_1^\top).$ This contradicts that $f\big( \widehat{R}_{i_k}\big)\rightarrow f\(ae_1^\top\).$
\end{proof}
}

\subsection{Graph multi-equipartition}
In this section, we consider the case where $k\geq 3$. In this case, the spectral upper bound in \eqref{SDPLR} already implies that any feasible point $R$ is a smooth point of 
$\B_{n,r}$, then we simply treat $\B_{n,r}$ as a manifold. We use an augmented Lagrangian method on $\B_{n,r}$ to solve the problem SDPLR1 in \eqref{SDPLR}. Note that recently the convergence of ALM on manifold has been studied in \cite{maniALM,WZW,DingALM}.  We reformulate problem (\ref{SDPLR}) as follows:
\begin{equation}\label{SDPLR1}
\min\left\{ \frac{1}{2}\inprod{L}{RR^\top}
+\delta_{\S_+^r}\(Y\):\ R^\top R+Y=\alpha I,\ R\in \B_{n,r}\right\}.
\end{equation}
The Lagrangian function is:
\begin{equation}\label{lgf}
\L\(R,Y,Z\):=\frac{1}{2}\inprod{L}{RR^\top}
-\inprod{Z}{R^\top R+Y-\alpha I}
+\delta_{\S^r_+}\(Y\)+\delta_{\B_{n,r}}\(R\).
\end{equation}

Let $\Pi_{\S^r_-}(\cdot)$ be the projection operator onto $\S^r_-$, the cone of
$r\times r$ symmetric negative semidefinite matrices.
The reduced augmented Lagrangian function is:
\begin{equation}\label{Augf}
\L_\beta\(R,Z\)=\frac{1}{2}\inprod{L}{RR^\top}
+\frac{1}{2\beta}\( \| \Pi_{\S_-^r}\( Z-\beta\(R^\top R-\alpha I\)\)\|^2-\|Z\|^2\).
\end{equation} 
$$ \nabla_R \L_{\beta}\(R,Z\)=LR-2R\,\Pi_{\S_-^r}\(Z-\beta\(R^\top R-\alpha I\)\).$$
The template of the augmented Lagrangian method is as follows:
\begin{description}
\item[Algorithm 2:] Choose $R_0\in \B_{n,r},$ $Z_0\in \S^r,$ $\beta_0>0.$ Set $k=0.$
\item[Step 1.] $R_k\in \arg\min\left\{ \L_{\beta_k}\(R,Z_{k-1}\):\ R\in \B_{n,r}\right\}.$
\item[Step 2.] $Z_k=\Pi_{\S_-^r}\( Z_{k-1}-\beta_k\(R_k^\top R_k-\alpha I\)\).$ 
\item[Step 3.] Choose $\beta_{k+1}$, set $k\leftarrow k+1.$ Goto Step 1.\\
\end{description}

\begin{prop}
Any saddle point of the Lagrangian function (\ref{lgf}) satisfies the first order KKT condition of (\ref{SDPLR}).
\end{prop}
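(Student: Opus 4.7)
The statement is essentially a standard saddle-point $\Rightarrow$ KKT verification, and I would approach it by unpacking the saddle-point inequality
$$
\L(R^*,Y^*,Z)\;\leq\;\L(R^*,Y^*,Z^*)\;\leq\;\L(R,Y,Z^*)
$$
separately in each of the three slots $Z$, $Y$, $R$, and read off a different piece of the KKT system from each. First, holding $R=R^*$, $Y=Y^*$ and varying $Z\in\S^r$, the only $Z$-dependence in $\L$ is the linear term $-\inprod{Z}{R^{*\top}R^*+Y^*-\alpha I}$; since this is unbounded in $Z$ unless the coefficient vanishes, the left inequality forces the primal equality
$R^{*\top}R^*+Y^*=\alpha I$.

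Next, fixing $R=R^*$ and $Z=Z^*$, the right inequality yields
$$
Y^*\;\in\;\arg\min_{Y\in\S^r}\bigl\{\delta_{\S^r_+}(Y)-\inprod{Z^*}{Y}\bigr\}.
$$
For this minimum to be finite we must have $Z^*\in-\S^r_+$ (dual feasibility $Z^*\preceq 0$); automatic membership gives $Y^*\succeq 0$, and optimality at $Y^*$ combined with $Y^*\succeq 0$, $Z^*\preceq 0$ gives the complementary slackness $\inprod{Y^*}{Z^*}=0$. Together with step one, this already produces primal feasibility for (\ref{SDPLR1}), and hence also for (\ref{SDPLR}) in the graph equipartition case (observing that $R^\top R\preceq \alpha I$ is equivalent to $\|R\|_2\leq\sqrt\alpha$).

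Finally, fixing $Y=Y^*$ and $Z=Z^*$, the right inequality gives
$$
R^*\;\in\;\arg\min_{R\in\R^{n\times r}}\Bigl\{\tfrac{1}{2}\inprod{L}{RR^\top}-\inprod{Z^*}{R^\top R}+\delta_{\B_{n,r}}(R)\Bigr\},
$$
which in particular forces $R^*\in\B_{n,r}$ and yields the stationarity condition
$LR^*-2R^*Z^*\in -\widehat{N}_{\B_{n,r}}(R^*)$, where $\widehat{N}_{\B_{n,r}}(R^*)$ is the (limiting) normal cone to $\B_{n,r}$ at $R^*$ (the polar of $\T_{\B_{n,r}}(R^*)$). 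Together with the feasibility and complementary slackness derived above, this is exactly the first-order KKT system of (\ref{SDPLR}).

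The only step that is not entirely routine is the last one, because $\B_{n,r}$ is not a smooth manifold (Proposition \ref{decomp}) and hence $\delta_{\B_{n,r}}$ is not a convex indicator, so one has to be careful about which subdifferential is used. However, this is not really an obstacle: the minimization characterization $R^*\in\arg\min_{R}\{\cdots+\delta_{\B_{n,r}}(R)\}$ implies $0\in \nabla_R[\tfrac12\inprod{L}{RR^\top}-\inprod{Z^*}{R^\top R}]_{R=R^*}+\widehat N_{\B_{n,r}}(R^*)$ by the very definition of the Fr\'echet/limiting normal cone, which is the geometrically correct notion at both smooth points (Proposition \ref{decomp} identifies it with the standard normal space) and singular points (where Propositions \ref{pcone}--\ref{sectan} give explicit descriptions of $\T_{\B_{n,r}}$). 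So the proof collapses to three short arguments, one per variable.
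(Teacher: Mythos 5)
Your first two steps (varying $Z$ to get $R^{*\top}R^*+Y^*=\alpha I$, and varying $Y$ to get $Y^*\succeq 0$, $Z^*\preceq 0$, $\inprod{Y^*}{Z^*}=0$) match the paper exactly, and your stationarity condition in $R$ is the same as the paper's $\P_{R}\(LR-2RZ\)=0$ once one identifies the normal cone of $\B_{n,r}$ at a smooth point with $\{\dd(\lambda)R+e\mu^\top\}$. (The worry about singular points is moot here: at a saddle point $R^\top R=\alpha I-Y\preceq \alpha I$ with $\alpha=\frac{n}{k-1}<n$ for $k\geq 3$, so $\|R\|_2<\sqrt{n}$ and $R$ cannot be rank one.)

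The genuine gap is your final sentence, where you assert that feasibility, the complementarity $\inprod{Y^*}{Z^*}=0$, and $LR^*-2R^*Z^*\in-\widehat{N}_{\B_{n,r}}(R^*)$ ``is exactly the first-order KKT system of (\ref{SDPLR})''. What you have derived at that point is the KKT system of the slack-variable reformulation (\ref{SDPLR1}), not of (\ref{SDPLR}). The paper's first-order KKT condition for (\ref{SDPLR}) is (\ref{KKT1}): $\(C-\A^*(y)\)R-ez^\top\in-\N_{\Omega_\alpha^2}(R)$, where $\N_{\Omega_\alpha^2}(R)=\{U_1SV_1^\top:\ S\in\S^{r_1}_+\}$ is the normal cone of the spectral-norm ball, supported on the leading singular subspace of $R$. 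To pass from your conditions to (\ref{KKT1}) you must show that the term $2R^*Z^*$ actually lies in $-\N_{\Omega_\alpha^2}(R^*)$, and this does not follow from $Z^*\preceq 0$ and $\inprod{Z^*}{R^{*\top}R^*-\alpha I}=0$ alone without an argument: one needs to observe that two negative semidefinite matrices with zero inner product satisfy $Z^*(R^{*\top}R^*-\alpha I)=0$, hence are simultaneously diagonalizable, hence $Z^*=V_1Z_1V_1^\top$ for some $Z_1\preceq 0$ with $V_1$ the right singular vectors of $R^*$ at singular value $\sqrt{\alpha}$, whence $R^*Z^*=U_1\sqrt{\alpha}Z_1V_1^\top\in-\N_{\Omega_\alpha^2}(R^*)$. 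This eigenspace-alignment step is the technical heart of the paper's proof and is entirely absent from your argument; without it the multiplier structure required by (\ref{KKT1}) (and used downstream in Lemma \ref{local} and Theorem \ref{lisg}) has not been established.
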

\begin{proof}
Suppose $\(R,Y,Z\)$ is a saddle point of (\ref{lgf}). We have $$R\in \arg\min_{R'}\left\{ \L\(R',Y,Z\)\right\},\  Y\in \arg\min_{Y'}\left\{ \L\(R,Y',Z\)\right\},\ Z\in \arg\max_{Z'}\left\{ \L\(R,Y,Z'\)\right\},$$
which implies that
\begin{align}
&\P_{R}\(LR-2RZ\)=0\label{Sep15_3}\\
&Z\in \N_{\S^r_+}\(Y\)\label{Sep15_4}\\
&R^\top R+Y-\alpha I=0. \label{Sep20_1}
\end{align}
The condition (\ref{Sep15_4}) implies that $Y\in \S_+^r,$ $Z\in \S_-^r$ and $\<Z,Y\>=0.$ From (\ref{Sep20_1}), $RR^\top -\alpha I\preceq 0$ and so $R\in \Omega_\alpha^2.$ Since $\<Z,Y\>=0$, we have that $\<Z,R^\top R-\alpha I\>=0.$ 
Because $Z,\ R^\top R-\alpha I\preceq 0$, the last complementarity 
condition implies that $Z (R^\top R-\alpha I) = 0$ and hence 
$Z$ and $R^\top R-\alpha I$ are simultaneously diagonalizable.
By considering the SVD of $R$ as in \eqref{SVD}, 
we have $Z=V_1 Z_1 V_1^\top$ for some $Z_1\preceq 0.$ So $RZ = U_1 \sqrt{\alpha}Z_1 V_1^\top\in -\N_{\Omega_\alpha^2}\(R\).$ 
The condition (\ref{Sep15_3}) implies $R\in \B_{n,r}$ and 
$$ J\(LR-2RZ-\dd\(\lambda_{LR-2RZ}^R\)R\)=0,$$ 
where $\lambda_{LR-2RZ}^R$ is defined as in (\ref{lamb}).
Thus,
$$ \(L-\dd\(\lambda_{LR-2RZ}^R\)\)R-e\mu^\top =2RZ\in -\N_{\Omega_\alpha^2}\(R\),$$ 
which implies (\ref{KKT1}), 
where 
$\mu = R^\top \(L-\dd\(\lambda_{LR-2RZ}^R\)\)e/n$.
Therefore, $R,\lambda_{LR-2RZ},\mu$ satisfies the first order KKT condition of (\ref{SDPLR}).
\end{proof}

\medskip
We terminate the augmented Lagrangian method based on the relative dual feasibility,
$$ 
\max\Bigg\{ \frac{\|\P_{R}\(LR-2RZ\)\|_F}{1+\|R\|_F+\|Z\|_F},\ \frac{\|Y-\P_{\S_+^r}\(Y+Z\)\|_F}{1+\|Y\|_F+\|Z\|_F}\Bigg\},
$$ 
and 
the relative primal feasibility, $\|RR^\top +Y-\alpha I\|_F/(1+\norm{Y}_F + \norm{R}_F).$ 
We stop the algorithm if  both the primal and dual feasibility are less than some given tolerance ${\rm tol}$. \\

\section{Numerical experiment}
In this section, we conduct numerical experiments to verify the efficiency of our methods. Note that our goal is not to show that the Riemannian optimization methods in section 4 are efficient since they are already {well studied the literature}. Our purpose is to show that by using the geometric structure of the algebraic variety $\B_{n,r}$, we are able to solve the graph equipartition SDP problem (\ref{SDPLR}) much more efficiently than other methods that do not exploit the
geometric structure of $\B_{n,r}$ or only partially exploiting geometric structure like oblique manifold. All the experiments were run in {\sc Matlab} R2020b on a MacBook Pro with 1.4 GHz Quad-Core Intel Core i5 processor and  8GB RAM.

\subsection{A rank adaptive strategy}
In our implementation, {we choose the parameter $r$} adaptively to {further increase computational efficiency.} We first set $r=k-1+\lceil\sqrt{2(n+1)}\rceil,$ which is the theoretical upper bound. Suppose $R\in \R^{n\times r}$ is an iteration point with singular values $\sigma_1\geq \sigma_2\ldots\geq \sigma_r>0.$ We consider the ratio of them $\big\{\frac{\sigma_i}{\sigma_{i+1}}:\ i\in [r-1]\big\}.$ 
If there exists $i\in [r-1]$ such that $\frac{\sigma_{i}}{\sigma_{i+1}}>10$, then we choose $j= \arg\max\big\{ \frac{\sigma_i}{\sigma_{i+1}}:\ i\in [r-1]\big\}$ and drop
the singular values $\sigma_{j+1},\sigma_{j+2},\ldots,\sigma_{r}$ and their corresponding singular vectors to save storage. We perform such rank reduction check 
in every 10 steps. Note that such a rank adaptive strategy is 
adopted from \cite{adaptive}.

\subsection{Experiments on minimum bisection SDPs}
First, we consider the SDPs coming from minimum bisection problems. 
In \cite{maniALM}, Boumal et al. used an ALM on Riemannian manifold\footnote{Source codes from https://github.com/losangle/Optimization-on-manifolds-with-extra-constraints.} called RALM to solve the minimum bisection SDP problems. Their manifold is $\OB_{n,r}$ and they penalized the extra constraint $R^\top e=0.$ We also compare with the well-known BM-method\footnote{Source codes from https://sburer.github.io/projects.html.}(see \cite{BM1,BM2}) called SDPLR, which directly apply an ALM on the 
factorized model  \eqref{SLR} without using the underlying manifold structure. We name our method AVBB, since we use gradient descent with BB step on an algebraic variety. For RALM, the authors stop the algorithm when the distance of two consecutive iterations is less than $10^{-10}.$ For SDPLR, the authors stop their algorithm when the primal residue is less than some tolerance, which we choose to be $10^{-6}$. For our method, we choose $\delta_0=0.02$ in Algorithm 1. We stop the algorithm in step 1 if the relative Riemannian gradient norm is less than ${\rm tol}=10^{-6}.$ Once we get the solution $R$, define 
\begin{equation}\label{dslack}
S_{\lambda}:=L-\A^*\( \lambda\).
\end{equation}
We check the KKT residues of computed solution $X = RR^\top$ 
for the SDP problem (\ref{SDP1}) as follows:
\begin{align}
&{\rm primal\ residue:}\ {\rm Rp} = \frac{\| \A\(RR^\top\)-b\|_2}{1+\|b\|_2},\quad {\rm dual\ residue:}\ {\rm Rd} =  \frac{\max\Big\{0,-\lambda_{\min}\big( JS_{\lambda_{LR}^R}J\big)\Big\}}{1+\|L\|_F},\notag \\
&{\rm\ complementarity:}\ {\rm Rc} = \frac{\big|\inprod{RR^\top}{S_{\lambda_{LR}^R}}\big|}{1+\|L\|_F},
\end{align}
where $\lambda_{LR}^R$ is defined as in (\ref{lamb}).

Because RALM and SDPLR do not use rank adaptive strategy. We first use our method AVBB to solve the SDP problem and check the KKT residues of the output
 $\hat{R}\in \R^{n\times \tilde{r}}.$ If we are able to solve SDP problem successfully, we choose $r=\tilde{r}+5$ for SDPLR and RALM. Note that by doing this, we already give the 
 other solvers some advantage since $r=\tilde{r}+5$ is usually much smaller than the theoretical upper bound. Also, we choose $r$ to be slightly large than $\tilde{r}$ so that they are less likely to reach a spurious local minimum. For the initialization, we  generate a random matrix, $\texttt{randn(n,r)}$, and compute its retraction onto $\B_{n,r}$ to get an initial point $R_0$ for our algorithm.  For RALM and SDPLR, we use their own initialization. When computing the dual residue, we use PROPACKmod in  \cite{TohYun} that is 
 modified from PROCPACK in \cite{propack}
 to compute the smallest eigenvalue, since it can make use of sparsity and low-rank property.

The graph data set we use in experiments comes from some small dense graphs from \cite{thetalib} and large social networks from \cite{Musae}.

\begin{center}
\begin{footnotesize}
\begin{longtable}{|c|c|cccc|l|}
\caption{Comparison of AVBB, RALM and SDPLR for minimum bisection SDP.}
\label{BC1}
\\
\hline
problem & algorithm & Rp & Rd & Rc& obj & time \\ \hline
\endhead
brock200-1 & AVBB & 1.97e-16 & 1.28e-10 & 8.62e-16 & 3.9857926e+03 & 2.95e-01 \\
 n=200& RALM& 1.28e-06 & 5.46e-10 &1.51e-06 &3.9857915e+03 &1.64e+01 \\
 m=5067 & SDPLR&1.32e-07&4.75e-10&4.93e-07&3.9857923e+03&3.58e+00\\
 \hline
 brock200-4 & AVBB & 1.72e-16  &  1.31e-10 & 1.02e-15  & 5.6562152e+03 &  1.56e-01 \\
 n=200& RALM& 1.21e-07 & 2.17e-09 &8.35e-07 &5.6562144e+03&1.90e+01 \\
 m=6812 & SDPLR& 1.32e-07 & 6.44e-11 &8.56e-07&5.6562143e+03 &7.52e+00\\
 \hline

brock400-1 & AVBB &  1.95e-16  &  4.84e-10 & 1.72e-15  & 1.6933377e+04 &  3.43e-01 \\
 n=400& RALM& 1.50e-07 & 1.10e-08 & 3.72e-07 &1.6933376e+04&2.55e+01 \\
 m=20078 & SDPLR& 9.51e-08 &3.40e-10 & 2.97e-07&1.6933377e+04  &3.04e+01\\
 \hline

c-fat200-1 & AVBB &  1.57e-16  &  1.09e-10 & 1.18e-15  & 1.7825896e+04 & 1.07e-01 \\
 n=200& RALM& 1.09e-06 & 7.39e-12 & 3.24e-07 & 1.7825895e+04 & 7.62e+00 \\
 m=18367 & SDPLR&1.31e-07 & 9.08e-11 & 1.05e-07 & 1.7825896e+04 & 3.06e+00\\
 \hline

hamming-6-4 & AVBB &  1.72e-16  & 1.56e-09 & 5.78e-16  & 1.0240000e+03 &  6.98e-02 \\
 n=64& RALM& 1.81e-16 & 2.97e-10 & 7.15e-16 & 1.0240000e+03 & 3.41e-01 \\
 m=1313 & SDPLR&1.08e-07 & 1.47e-06 & 2.40e-07 & 1.0240000e+03 & 3.20e-01\\
 \hline

hamming-7-5-6 & AVBB &  2.16e-16  &  4.23e-09 & 1.12e-15  & 1.5360000e+03 &  5.51e-02\\
 n=128& RALM& 2.14e-16 & 1.26e-10 & 9.78e-16 & 1.5360000e+03 & 1.02e+00\\
 m=1793 & SDPLR&1.50e-07 & 2.24e-07 & 1.09e-07 & 1.5360000e+03 & 3.44e-01\\
 \hline

hamming-8-3-4 & AVBB & 1.83e-16  &  2.16e-10 & 1.36e-15  & 1.4336000e+04 &  5.06e-02\\
 n=256& RALM& 2.41e-13 & 1.48e-10 & 2.92e-15 & 1.4336000e+04 & 2.08e+00\\
 m=16129 & SDPLR&3.18e-08 & 9.79e-08 & 1.80e-07 & 1.4336000e+04 & 4.20e-01\\
 \hline

 hamming-8-4& AVBB &   2.01e-16  &  9.02e-11 & 8.05e-16  & 7.4240000e+03 &  6.57e-02\\
 n=256& RALM& 3.35e-16 & 1.67e-11 & 1.15e-15 & 7.4240000e+03 & 1.40e+00\\
 m=11777 & SDPLR&5.76e-08 & 1.95e-07 & 2.22e-08 & 7.4240000e+03 & 4.50e-01\\
 \hline

 hamming-9-5-6& AVBB &   2.22e-16  &  1.81e-10 & 2.42e-15  & 5.0176000e+04 &  1.37e-01\\
 n=512& RALM& 2.69e-14 & 2.87e-11 & 3.12e-15 & 5.0176000e+04 & 5.41e+00\\
 m=53761 & SDPLR&2.63e-08 & 8.32e-08 & 4.17e-08 & 5.0176000e+04 & 7.59e-01\\
 \hline

 hamming-9-8& AVBB &  0.00e+00  &  -0.00e+00 & 0.00e+00  & 0.0000000e+00 &  3.56e-01\\
 n=512& RALM&2.22e-16 & 4.01e-19 & 1.40e-15 & 1.1220542e-14 & 3.24e+00\\
 m=2305 & SDPLR&3.32e-08 & 0.00e+00 & 1.11e-10 & 1.1938166e-08 & 6.37e-01\\
 \hline

hamming-10-2& AVBB &0.00e+00  &  -0.00e+00 & 0.00e+00  & 0.0000000e+00 &  6.32e-01\\
 n=1024& RALM&2.22e-16 & 0.00e+00 & 3.04e-15 & 1.8270875e-14 & 5.61e+00\\
 m=23041 & SDPLR&1.01e-08 & 5.07e-14 & 4.43e-11 & 3.2290904e-08 & 1.26e+00\\
 \hline

hamming-11-2& AVBB & 0.00e+00  &  -0.00e+00 & 0.00e+00  & 0.0000000e+00 &  6.95e-01\\
 n=2048& RALM&2.27e-16 & 7.55e-18 & 1.56e-14 & 2.7414197e-12 & 2.28e+01\\
 m=56321 & SDPLR&2.96e-08 & 1.56e-18 & 3.83e-11 & 4.8081223e-08 & 5.05e+00\\
 \hline

hamming6-2& AVBB &1.28e-16  &  3.03e-09 & 5.47e-17  & 6.4000000e+01 &  4.40e-02\\
 n=64& RALM&4.47e-16 & 1.31e-08 & 2.33e-16 & 6.4000000e+01 & 1.67e+00\\
 m=193 & SDPLR&1.85e-07 & 8.43e-08 & 2.39e-07 & 6.4000019e+01 & 4.33e-01\\
 \hline

hamming8-2-G& AVBB &1.96e-16  &  4.60e-09 & 3.91e-16  & 2.5600000e+02 &  6.54e-02\\
 n=256& RALM&1.84e-16 & 2.00e-10 & 4.27e-16 & 2.5600000e+02 & 2.28e+00\\
 m=1024 & SDPLR&9.34e-08 & 1.02e-06 & 1.07e-08 & 2.5600000e+02 & 5.53e-01\\
 \hline

MANN-a27& AVBB & 2.36e-16  &  3.63e-09 & 2.49e-16  & 1.2219459e+02 &  2.95e-01\\
 n=378& RALM&2.22e-16 & 1.73e-10 & 2.24e-16 & 1.2219459e+02 & 2.36e+01\\
 m=703 & SDPLR&8.08e-08 & 7.13e-07 & 1.13e-07 & 1.2219460e+02 & 7.51e-01\\
 \hline

johnson8-4-4 & AVBB &  1.86e-16  &  1.18e-09 & 3.92e-16  & 2.8000000e+02 &  2.86e-02 \\
 n=70& RALM& 1.73e-16 & 8.38e-10 & 4.85e-16 & 2.8000000e+02 & 6.10e-01 \\
 m=561 & SDPLR&1.12e-07 & 1.55e-06 & 5.82e-08 & 2.8000000e+02 & 4.20e-01\\
 \hline

johnson16-2-4 & AVBB & 1.98e-16  &  1.34e-11 & 4.06e-16  & 9.6000000e+02 &  4.86e-02 \\
 n=120& RALM& 1.85e-16 & 1.01e-10 & 8.23e-16 & 9.6000000e+02 & 1.09e+00 \\
 m=1681 & SDPLR&6.41e-08 & 7.84e-07 & 1.62e-07 & 9.5999998e+02 & 3.38e-01\\
 \hline

keller4 & AVBB &  1.88e-16  &  1.42e-09 & 4.34e-16  & 3.2898566e+03 &  1.12e-01 \\
 n=171& RALM& 7.57e-06 & 1.56e-11 & 1.08e-05 & 3.2898481e+03 & 4.66e+00\\
 m=5101 & SDPLR&1.42e-07 & -0.00e+00 & 1.47e-06 & 3.2898555e+03 & 7.43e+00\\
 \hline

keller5-G & AVBB & 1.97e-16  &  4.69e-10 & 2.17e-15  & 4.7829110e+04 &  5.65e-01 \\
 n=776& RALM& 1.76e-07 & 4.30e-10 & 3.51e-07 & 4.7829108e+04 & 3.16e+01\\
 m=74710 & SDPLR&6.92e-08 & 3.79e-11 & 2.20e-07 & 4.7829109e+04 & 4.45e+01\\
 \hline

 p-hat300-1& AVBB & 1.75e-16  &  4.13e-11 & 1.49e-15  & 3.2077515e+04 &  1.71e-01 \\
 n=300& RALM& 7.60e-07 & 1.00e-09 & 1.10e-06 & 3.2077511e+04 & 1.66e+01\\
 m=33918 & SDPLR&1.09e-07 & 6.84e-11 & 4.14e-07 & 3.2077514e+04 & 2.56e+01\\
 \hline

san200-07-1& AVBB & 1.84e-16  &  1.04e-09 & 1.19e-15  & 4.8782920e+03 &  1.84e-01 \\
 n=200& RALM& 1.99e-07 & 3.22e-09 & 8.08e-07 & 4.8782913e+03 & 1.65e+01\\
 m=5971 & SDPLR&1.31e-07 & 2.78e-10 & 6.54e-07 & 4.8782914e+03 & 6.26e+00\\
 \hline

musae-PTBR& AVBB &1.67e-16  &  1.21e-10 & 1.09e-14  & 1.0504381e+04 &  4.64e+00\\
 n=1912& RALM& 5.45e-08 & 4.26e-06 & 5.48e-06 & 1.0504640e+04 & 6.85e+01\\
 m=31299 & SDPLR&5.98e-07 & 1.09e-10 & 1.84e-05 & 1.0504329e+04 & 4.70e+02\\
 \hline

musae-chameleon& AVBB &1.42e-16  &  1.28e-10 & 3.18e-15  & 4.7543522e+02 &  2.40e+01\\
 n=2277& RALM& 3.19e-09 & 2.04e-05 & 1.03e-07 & 4.7833683e+02 & 8.82e+01\\
 m=36101 & SDPLR&4.79e-08 & 4.77e-19 & 4.92e-07 & 4.7543371e+02 & 4.76e+02\\
 \hline

musae-RU& AVBB &1.59e-16  &  5.82e-11 & 9.31e-16  & 1.1774237e+04 &  1.49e+01\\
 n=4385& RALM& - & - & - & - & -\\
 m=37304 & SDPLR&1.97e-05 & 9.37e-09 & 5.47e-05 & 1.1774072e+04 & 4.77e+02\\
 \hline

musae-ES& AVBB &1.67e-16  &  1.55e-11 & 6.02e-16  & 1.5038307e+04 &  6.45e+00\\
 n=4648& RALM& - & - & - & - & -\\
 m=59482 & SDPLR&7.30e-05 & 7.19e-07 & 9.63e-05 & 1.5038055e+04 & 5.02e+02\\
 \hline

musae-squirrel& AVBB & 1.66e-16  &  3.65e-10 & 8.89e-15  & 1.0907331e+04 &  3.77e+01\\
 n=5201& RALM& - & - & - & - & -\\
 m=217073 & SDPLR&3.32e-04 & 1.78e-07 & 9.43e-04 & 1.0899471e+04 & 5.06e+02\\
 \hline

musae-FR& AVBB &1.55e-16  &  2.56e-05 & 1.62e-15  & 3.6960466e+04 &  2.04e+01\\
 n=6549& RALM& - & - & - & - & -\\
 m=112666 & SDPLR&4.01e-02 & 4.41e-05 & 2.24e-01 & 3.4091560e+04 & 5.01e+02\\
 \hline

musae-ENGB& AVBB &1.71e-16  &  7.65e-10 & 1.06e-15  & 8.9985089e+03 &  1.04e+01\\
 n=7126& RALM& - & - & - & - & -\\
 m=35324 & SDPLR&9.42e-03 & 4.88e-06 & 8.45e-02 & 8.8164546e+03 & 4.96e+02\\
 \hline

musae-DE& AVBB &1.53e-16  &  4.69e-11 & 4.17e-16  & 4.9503018e+04 &  2.88e+01\\
 n=9498& RALM& - & - & - & - & -\\
 m=153138 & SDPLR&- & - & -& - &-\\
 \hline

musae-crocodile & AVBB & 1.83e-16  &  9.02e-12 & 1.70e-15  & 1.3265035e+04 &  3.32e+01 \\
 n=11631& RALM& - & - & - & - & -\\
 m=180020 & SDPLR&- & - & - & -& -\\
 \hline

musae-facebook & AVBB & 1.96e-16  &  1.20e-09 & 3.17e-15  & 6.0495334e+03 &  1.12e+02 \\
 n=22470& RALM& - & - & - & - & -\\
 m=171002 & SDPLR&- & - & - & -& -\\
 \hline
 
\end{longtable}
\end{footnotesize}
\end{center}

From Table 1, we can see that if the graph size is moderate, all the three methods can find optimal solution for the SDP \eqref{SDP1} successfully
since its corresponding KKT residues (Rp,Rd,Rc) are all smaller than the required tolerance.
However, when the graph size is too large, such as
{\tt musae-DE}, {\tt musae-crocodile} and {\tt musae-facebook}, only AVBB can solve these problems. 
In particular, we are able to solve the largest instance {\tt musae-facebook} with 
$n=22470$ and $m=171002$ in about two minutes.

Both RALM and SDPLR will terminate if there is little progress for many iterations.
In the tables, ``-" means that they terminate prematurely and return a solution that is 
far from the optimal solution. Among all the problems, AVBB is clearly
 faster and more accurate than RALM and SDPLR. Note that for 
 {\tt hamming-9-8}, {\tt hamming-10-2} and {\tt hamming-11-2}, the optimal solution is a singular point and AVBB finds them successfully.

\subsection{Experiments on graph equipartition SDPs}

In this section, we test on graph equipartition SDP problems with more than 2 partitions. Since RALM and SDPLR cannot handle the extra SDP upper bound constraint
$X\preceq \alpha I$, we do not test them in the experiments. 
Instead, we compare our method with the interior point method (which we denote as IPM) in section 6 of \cite{TTT} that can handle the SDP upper bound constraint directly. 
{Note that applying the IPM in \cite{TTT} to handle the semidefinite upper bound constraint directly is much more efficient than applying an IPM to the reformulated problem of converting the bound constraint to an affine constraint with an additional slack SDP variable.}

We call our method AVALM where ``AV" stands for ``algebraic variety''. We use a Riemannian gradient method with BB step and nonmonotone linesearch to solve the ALM subproblem to the accuracy of $10^{-6}$ in the relative norm of 
 the Riemannian gradient. We also set the maximum number of iterations for solving the subproblem to be 200. We set the initial penalty parameter $\beta=0.1$ and set 
$$ 
\beta_{k+1}=\begin{cases} \max\{\beta_k/1.2,0.1\}&{\rm pfeas < dfeas/1000}\\ \min\{1.2\beta_k,10\} &{\rm pfeas \geq  \max\{dfeas/1000,10\cdot tol\}}.\end{cases}
$$ 

\noindent We stop our ALM algorithm if both the primal feasibility (pfeas) and dual feasibility (dfeas)
 are less than $10^{-6}.$
We also stop our algorithm if there is no progress for many iterations. After we get a solution $(R,Z)$, let ${\bf S}=S_{\lambda_{LR-2RZ}}$ as in (\ref{dslack}), then we check the following relative KKT residues for the original convex SDP problem (\ref{SDP1}):
\begin{align}
&{\rm primal\ residue:}\ {\rm Rp} = \max\left\{ \frac{\| \A\(RR^\top\)-b\|_2}{1+\|b\|_2},\|R\|_2-\sqrt{\alpha}\right\},\notag \\ 
&{\rm dual\ residue:}\ {\rm Rd}=\frac{\max\left\{\lambda_{\max}\(U_1^\top {\bf S} U_1\),\ \max\left\{-\lambda_{\min}\(J\({\bf S}-U_1U_1^\top {\bf S} U_1U_1^\top\)J\),0\right\}\right\}}{1+\|L\|_F}, \notag \\
&{\rm complementarity:}\  {\rm Rc} =   \frac{\Big|  \< {\bf S}-U_1U_1^\top {\bf S} U_1U_1^\top,RR^\top\>\Big|}{1+\|L\|_F},
\end{align}
where $U_1$ is defined in (\ref{SVD}). If $\max\{{\rm Rp},{\rm Rd},{\rm Rc}\}$ is small, then we have
obtained an accurate approximate optimal solution to the SDP problem \eqref{SDP1}.

Since the interior point method IPM cannot handle graph equipartition SDP problems
arising from 
large graphs, we only compare AVALM with IPM on small datasets in \cite{thetalib},
where we set the number of partitions $k=5.$ From 
Table \ref{TEQP1}, we see that both AVALM and IPM can solve all of these problems accurately. However, AVALM is much more efficient than IPM, and for some instances AVALM can be 20-100 times faster.

\begin{center}
\begin{footnotesize}
\begin{longtable}{|c|c|cccc|l|}
\caption{Comparison of AVALM with IPM for graph equipartition SDP with $k=5$.}
\label{TEQP1}
\\
\hline
problem & algorithm & Rp & Rd & Rc& obj & time \\ \hline
\endhead
brock200-1 & AVALM & 6.22e-06 & 9.53e-09 & 2.12e-08 & 3.9860693e+03 & 4.54e-01 \\
 n=200,m=5067& IPM& 1.21e-07 & 2.37e-07 & 2.14e-06 & 3.9860704e+03 & 3.90e+00
 \\
 \hline
 brock200-4 & AVALM & 4.15e-06 & 4.47e-08 & 9.17e-11 & 5.6564739e+03 & 4.40e-01 \\
 n=200,m=6812& IPM& 1.34e-07 & 5.56e-09 & 1.37e-06 & 5.6564750e+03 & 3.88e+00 \\
 \hline

brock400-1 & AVALM &  2.15e-16 & 1.12e-09 & 2.15e-15 & 1.6933377e+04 & 3.55e-01 \\
 n=400,m=20078& IPM& 1.93e-07 & 8.08e-10 & 6.99e-06 & 1.6933383e+04 & 7.52e+00 \\
 \hline

c-fat200-1 & AVALM &  1.47e-06 & 8.34e-16 & 5.45e-18 & 1.7842168e+04 & 5.48e-01 \\
 n=200,m=18367& IPM& 7.51e-08 & 1.22e-10 & 1.92e-06 & 1.7842169e+04 & 1.96e+00 \\
 \hline

hamming-6-4 & AVALM &  1.63e-16 & 1.01e-09 & 3.73e-16 & 1.0240000e+03 & 1.94e-02 \\
 n=64,m=1313& IPM& 2.41e-09 & 1.63e-08 & 4.39e-08 & 1.0240002e+03 & 2.77e-01 \\
  \hline

hamming-7-5-6 & AVALM &  2.23e-16 & 2.99e-09 & 9.81e-16 & 1.5360000e+03 & 3.11e-02\\
 n=128,m=1793& IPM& 6.72e-09 & 1.12e-10 & 3.73e-07 & 1.5360001e+03 & 6.52e-01\\
  \hline

hamming-8-3-4 & AVALM &  2.01e-16 & 6.37e-10 & 1.55e-15 & 1.4336000e+04 & 5.10e-02\\
 n=256,m=16129& IPM& 2.99e-08 & 4.08e-11 & 8.95e-08 & 1.4336000e+04 & 1.31e+00\\
 \hline

 hamming-8-4& AVALM&  2.05e-16 & 6.60e-11 & 8.56e-16 & 7.4240000e+03 & 5.39e-02\\
 n=256& IPM& 1.74e-07 & 1.66e-10 & 1.61e-07 & 7.4240001e+03 & 1.32e+00\\
  \hline

 hamming-9-5-6& AVALM &2.55e-16 & 1.70e-10 & 2.23e-15 & 5.0176000e+04 & 1.12e-01\\
 n=512,m=53761& IPM& 8.70e-09 & -0.00e+00 & 2.15e-06 & 5.0176005e+04 & 3.74e+00\\
  \hline

 hamming-9-8& AVALM & 2.38e-08 & 5.54e-09 & 1.41e-09 & 7.6800000e+02 & 3.08e-01\\
 n=512,m=2305& IPM&3.50e-07 & 1.00e-13 & 5.65e-08 & 7.6800002e+02 & 5.78e+00\\
 \hline

hamming-10-2& AVALM &7.25e-09 & 3.93e-10 & 8.14e-11 & 6.9120000e+03 & 4.19e-01\\
 n=1024,m=23041& IPM&5.19e-08 & 0.00e+00 & 3.72e-07 & 6.9120002e+03 & 2.69e+01\\
  \hline

hamming-11-2& AVALM &2.12e-16 & 3.04e-09 & 1.37e-08 & 1.5360000e+04 & 8.34e-01\\
 n=2048,m=56321& IPM&4.05e-07 & 1.00e-12 & 2.87e-07 & 1.5360001e+04 & 2.10e+02\\
 \hline

hamming6-2& AVALM &4.75e-06 & 4.50e-08 & 6.57e-08 & 6.4711622e+01 & 2.12e-01\\
 n=64,m=193& IPM&5.12e-08 & 9.33e-09 & 9.55e-08 & 6.4711622e+01 & 6.46e-01\\
  \hline

hamming8-2-G& AVALM &1.89e-16 & 1.16e-08 & 4.04e-16 & 2.5600000e+02 & 5.53e-02\\
 n=256,m=1024& IPM&1.63e-08 & 2.02e-10 & 2.76e-07 & 2.5600002e+02 & 1.27e+00\\
  \hline

MANN-a27& AVALM &2.30e-16 & 2.58e-08 & 3.89e-16 & 1.2219459e+02 & 3.97e-01\\
 n=378,m=703& IPM&4.14e-08 & -0.00e+00 & 9.37e-07 & 1.2219464e+02 & 3.05e+00\\
  \hline

johnson8-4-4 & AVALM & 1.85e-16 & 1.16e-09 & 9.12e-09 & 2.8000000e+02 & 2.13e-02 \\
 n=70,m=561& IPM& 4.01e-09 & 9.61e-10 & 1.12e-08 & 2.8000001e+02 & 2.05e-01\\
 \hline
 johnson16-2-4 & AVALM & 2.21e-16 & 5.68e-10 & 9.10e-16 & 9.6000000e+02 & 2.68e-02 \\
 n=120,m=1681& IPM& 1.73e-08 & 2.92e-18 & 1.14e-06 & 9.6000017e+02 & 4.28e-01 \\
 \hline

keller4 & AVALM &  5.62e-06 & 2.77e-13 & 1.39e-08 & 3.2960645e+03 & 1.82e-01 \\
 n=171,m=5101& IPM& 1.03e-07 & 1.39e-09 & 1.49e-06 & 3.2960652e+03 & 1.91e+00\\
 \hline

keller5-G & AVALM &  1.94e-16 & 5.93e-10 & 2.26e-15 & 4.7829110e+04 & 9.28e-01 \\
 n=776,m=74710& IPM& 3.37e-03 & 1.04e-13 & 5.75e-08 & 4.7855470e+04 & 3.56e+01 \\
 \hline

p-hat300-1 & AVALM &  4.07e-06 & 4.19e-10 & 5.46e-10 & 3.2077517e+04 & 2.83e-01\\
 n=300,m=33918& IPM&3.43e-07 & 2.84e-09 & 3.57e-06 & 3.2077523e+04 & 5.83e+00\\
  \hline

san200-07-1 & AVALM & 2.06e-06 & 8.32e-07 & 3.42e-09 & 4.8786431e+03 & 4.26e-01\\
 n=200,m=5971& IPM& 8.72e-09 & 8.93e-09 & 3.42e-06 & 4.8786445e+03 & 4.61e+00\\
  \hline
 
\end{longtable}
\end{footnotesize}
\end{center}


We also test AVALM alone on
graph equipartition SDPs arising large social network graphs 
in \cite{Musae}, where we choose $k=3,5,10$ respectively.
Table \ref{TEQP3}
clearly shows that our algorithm can solve all of these large scale SDP problems  
extremely efficiently and accurately 
with certified global optimality for the SDP problem \eqref{SDP1} 
based on ${\rm Rp}$, ${\rm Rd}$ 
and ${\rm Rc}$.
For the largest instance {\tt musae-facebook} with $n=22470$ and $m=171002$, we are able
to solve the SDP problem 
in under 5 minutes when $k=3$ or $5$, and in 
about 11 minutes when $k=10$.

\begin{center}
\begin{footnotesize}
\begin{longtable}{|c|c|cccc|c|} 
\caption{Test results of AVALM on large graph equipartition SDP.}
\label{TEQP3}
\\
\hline
problem & k & Rp & Rd & Rc& obj & time \\ \hline
\endhead
musae-PTBR& 3 &1.26e-05 & 1.08e-09 & 3.88e-10 & 1.1097575e+04 & 2.84e+00\\
 n=1912& 5& 3.53e-08 & 2.30e-10 & 3.67e-08 & 1.3089330e+04 & 6.41e+00\\
 m=31299 & 10&2.09e-07 & 1.23e-08 & 1.20e-06 & 1.5773687e+04 & 1.25e+01\\
 \hline

musae-chameleon& 3 &9.60e-08 & 9.21e-10 & 2.61e-08 & 1.2425855e+03 & 3.94e+00\\
 n=2277& 5& 2.07e-06 & 1.21e-09 & 5.51e-09 & 2.2463854e+03 & 3.77e+00\\
 m=36101 & 10&3.17e-07 & 2.08e-07 & 7.58e-07 & 4.2365961e+03 & 2.85e+01\\
 \hline

musae-RU& 3 &1.69e-16 & 5.20e-11 & 2.36e-09 & 1.2051705e+04 & 7.31e+00\\
 n=4385& 5&1.18e-07 & 2.63e-08 & 8.56e-09 & 1.3456477e+04 & 1.07e+01\\
 m=37304 & 10&1.75e-07 & 9.25e-10 & 1.41e-09 & 1.5615746e+04 & 1.43e+01\\
 \hline

musae-ES& 3 &6.99e-06 & 4.86e-10 & 2.68e-09 & 1.5267811e+04 & 4.89e+00\\
 n=4648& 5& 8.00e-08 & 1.04e-09 & 3.05e-08 & 1.8335808e+04 & 5.57e+00\\
 m=59482 & 10&1.25e-07 & 1.31e-08 & 6.58e-08 & 2.3154327e+04 & 3.18e+01\\
 \hline

musae-squirrel& 3 &1.61e-07 & 4.92e-08 & 1.63e-09 & 1.6163193e+04 & 2.71e+01\\
 n=5201& 5& 3.27e-06 & 2.50e-09 & 5.44e-07 & 2.8359632e+04 & 5.00e+01\\
 m=217073 & 10&4.14e-06 & 8.00e-08 & 2.39e-07 & 5.8591875e+04 & 2.54e+02\\
 \hline

musae-FR& 3 &4.52e-08 & 9.30e-09 & 8.35e-08 & 3.7199157e+04 & 1.38e+01\\
 n=6549& 5& 8.64e-07 & 9.50e-11 & 4.35e-10 & 4.1936390e+04 & 9.01e+00\\
 m=112666 & 10&7.28e-07 & 7.67e-10 & 2.52e-09 & 5.0585680e+04 & 3.55e+01\\
 \hline

musae-ENGB& 3 &1.90e-16 & 2.35e-09 & 7.57e-16 & 8.9985089e+03 & 8.38e+00\\
 n=7126& 5& 3.56e-06 & 4.56e-08 & 3.70e-07 & 9.3864146e+03 & 1.67e+01\\
 m=35324 & 10&1.04e-04 & 3.05e-08 & 1.70e-06 & 1.0792772e+04 & 2.85e+01\\
 \hline

musae-DE& 3 &1.95e-16 & 5.19e-10 & 1.99e-06 & 5.1971122e+04 & 2.26e+01\\
 n=9498& 5& 6.75e-07 & 1.06e-10 & 1.88e-09 & 6.0468739e+04 & 1.73e+01\\
 m=153138 & 10&1.58e-07 & 5.76e-10 & 3.24e-07 & 7.1324948e+04 & 1.06e+02\\
 \hline

musae-crocodile& 3 &5.28e-07 & 4.78e-11 & 5.25e-10 & 1.3265057e+04 & 3.11e+01\\
 n=11631& 5& 2.73e-07 & 3.69e-10 & 1.04e-07 & 1.6310440e+04 & 5.30e+01\\
 m=180020 & 10&3.56e-05 & 9.59e-10 & 2.30e-06 & 2.8584887e+04 & 2.32e+02\\
 \hline

musae-facebook& 3 &1.82e-16 & 8.27e-09 & 1.59e-15 & 6.0495349e+03 & 1.19e+02\\
 n=22470& 5& 2.50e-06 & 5.50e-09 & 1.89e-08 & 6.7587115e+03 & 2.42e+02\\
 m=171002 & 10&1.98e-16 & 1.62e-09 & 2.04e-09 & 9.6414771e+03 & 6.53e+02\\
 \hline

\end{longtable}
\end{footnotesize}
\end{center}

\section{Conclusion}
In this paper, we study the SDP relaxation of a graph equipartition problem. We study the optimality condition of the low-rank factorization model  of the SDP problem, which contains an 
additional SDP upper bound as compared to the standard linear SDP problem. We prove the equivalence between the SDP problem and its factorized version. In addition, 
we prove that under the constraint nondegeneracy condition, any rank-deficient {second order stationary point} of the factorized problem is a global optimal solution. To solve the SDP problem, we study the properties of a special algebraic variety $\B_{n,r}$, which comes from adding an extra coupling affine constraint to the oblique manifold $\OB_{n,r}.$ We find a closed form solution of the projection mapping onto $\B_{n,r}$, and more importantly, we prove that the retraction is equivalent to a geometric median problem under some condition. We also study the local geometric properties of singular points on $\B_{n,r}.$ With these properties, we are able to use a gradient descent method with BB step and nonmonotone line search and an augmented Lagrangian method on the algebraic variety $\B_{n,r}$ to solve the factorized SDP problem efficiently. The results from our numerical experiments verify the high efficiency of our method. {Our work shows that by making use of the geometric property of the feasible region, one can design algorithms that are significantly faster than other algorithms that do not rely on the geometric property like SDPLR or only partially rely on its geometric property like RALM.}

\section*{Acknowledgments}

{We thank the editors and reviewers 
for their valuable suggestions and comments, which have helped
to improve the quality of this paper.}


\newpage
\appendix

{
\section{Useful auxiliary results}
\subsection{A corollary of the implicit function theorem}
\begin{lem}\label{implicit}
Let $F:\R^{n}\times \R^{m}\rightarrow \R^{k}$ be a continuously differentiable mapping in a neighbourhood of $\( x_0,y_0\)\in \R^{n}\times \R^{m}.$ Suppose $F(x_0,y_0)=0$ and $\D_y F(x_0,y_0):\R^m\rightarrow \R^k$ is a surjective linear mapping. Then there exists $\delta>0$ and a continuously differentiable mapping $y:B_\delta(x_0)\rightarrow \R^k$ such that $y(x_0)=y_0$ and $F(x,y(x))=0.$ Here, $B_\delta(x_0):=\left\{ x\in \R^n:\ \dist(x,x_0)<\delta\right\}.$
\end{lem}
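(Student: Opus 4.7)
The plan is to reduce the statement to the classical implicit function theorem by selecting a suitable $k$-dimensional subspace of $\R^m$ on which $\D_y F(x_0,y_0)$ restricts to an isomorphism. The only obstacle to applying the standard result directly is that surjectivity of $\D_y F(x_0,y_0):\R^m\rightarrow \R^k$ allows $m>k$, so the partial Jacobian is in general not square; once we cut $\R^m$ down to a subspace of the correct dimension, the classical theorem applies verbatim.

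In detail, first I would use surjectivity to choose indices $i_1<\cdots <i_k$ such that the columns of the Jacobian matrix of $F$ in the $y$-variables at $(x_0,y_0)$ indexed by $i_1,\ldots,i_k$ are linearly independent. Equivalently, write $\R^m=V\oplus W$, where $V=\mathrm{span}\{e_{i_1},\ldots,e_{i_k}\}$ and $W$ is spanned by the remaining standard basis vectors, and observe that $\D_y F(x_0,y_0)\big|_V:V\rightarrow \R^k$ is a bijection. Correspondingly decompose any $y\in \R^m$ as $y=u+w$ with $u\in V$, $w\in W$, and write $y_0=u_0+w_0$.

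Next I would define the auxiliary mapping $\widetilde{F}:\R^n\times V\rightarrow \R^k$ by $\widetilde{F}(x,u):=F(x,u+w_0)$. Then $\widetilde{F}$ is continuously differentiable near $(x_0,u_0)$, $\widetilde{F}(x_0,u_0)=F(x_0,y_0)=0$, and $\D_u \widetilde{F}(x_0,u_0)=\D_y F(x_0,y_0)\big|_V$ is invertible as a linear map $V\rightarrow \R^k$. Identifying $V\cong \R^k$, the classical implicit function theorem yields $\delta>0$ and a continuously differentiable map $u:B_\delta(x_0)\rightarrow V$ with $u(x_0)=u_0$ and $\widetilde{F}(x,u(x))=0$ for all $x\in B_\delta(x_0)$.

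Finally, define $y(x):=u(x)+w_0$. Then $y$ is continuously differentiable on $B_\delta(x_0)$, $y(x_0)=u_0+w_0=y_0$, and
\begin{equation*}
F(x,y(x))=F(x,u(x)+w_0)=\widetilde{F}(x,u(x))=0,
\end{equation*}
which is the desired conclusion. No step beyond the choice of the complementary splitting $\R^m=V\oplus W$ is substantive; the remaining work is a direct invocation of the standard implicit function theorem, so I do not anticipate any genuine obstacle.
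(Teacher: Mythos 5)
Your proof is correct. It takes the dual route to the paper's: the paper makes the system square by \emph{augmenting the equations}, choosing a linear map $\A:\R^m\rightarrow\R^{m-k}$ so that $\triangle y\mapsto(\D_yF(x_0,y_0)[\triangle y],\A(\triangle y))$ is bijective and then applying the classical implicit function theorem to $\widehat F(x,y)=(F(x,y),\A(y)-\A(y_0))$, whereas you make it square by \emph{restricting the unknowns}, splitting $\R^m=V\oplus W$ along coordinate axes so that $\D_yF(x_0,y_0)|_V$ is invertible, freezing the $W$-component at $w_0$, and solving only for the $V$-component. The two constructions produce the same solution when one takes $\A$ in the paper's proof to be the coordinate projection onto $W$ (the augmented equations then force $w(x)\equiv w_0$), so neither is more general in substance; yours is marginally more economical in that verifying invertibility reduces to picking $k$ linearly independent columns of a rank-$k$ matrix, while the paper's keeps all $m$ components of $y$ as unknowns and allows an arbitrary complement of $\ker\D_yF(x_0,y_0)$ rather than a coordinate-aligned one. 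Your version also absorbs the case $m=k$ (where $W=\{0\}$) without treating it separately, which the paper does as a preliminary case. One cosmetic note: the codomain of $y$ in the lemma statement should read $\R^m$ rather than $\R^k$; your construction correctly produces a map into $\R^m$.
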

\begin{proof}
Since $\D_y F(x_0,y_0)$ is surjective, we have that $m\geq k.$ If $m=k,$ then Lemma~\ref{implicit} directly follows from the implicit function theorem and the mapping $y$ is unique. If $m>k$, we may find a linear mapping $\A:\R^m\rightarrow \R^{m-k}$ such that the linear operator $\triangle y\rightarrow \(\D_y F(x_0,y_0)[\triangle y],\A(\triangle y)\)$ is a bijective mapping from $\R^m$ to $\R^m.$ Thus, we can define a new mapping $\widehat{F}:\R^{n}\times \R^{m}\rightarrow \R^m$ such that $\widehat{F}(x,y):=\(F(x,y),\A(y)-\A(y_0)\)$ and apply the implicit function theorem to $\widehat{F}.$  
\end{proof}

\subsection{Properties of the rounding procedure}
\begin{lem}\label{round}
Suppose $n\geq 2$ and $\delta<\frac{1}{2}.$ Then for any $R\in \B_{n,r}^{\delta+},$
\begin{equation}\label{RRclose}
\left\| \Ron\(R\)\Ron\(R\)^\top-RR^\top\right\|\leq 2\sqrt{\delta}n.
\end{equation}
Moreover, if $\delta<\frac{1}{n^2},$ then $\Ron(R)\in \B_{n,r}.$
\end{lem}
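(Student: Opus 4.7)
My plan is to treat both parts via an SVD analysis of $R$. Let $R = \sum_{i=1}^r \sigma_i u_i v_i^\top$ be the SVD with $\sigma_1 \geq \sigma_2 \geq \cdots \geq 0$. Because $R \in \B_{n,r}$, the identity $\dd(RR^\top) = e$ forces $\|R\|_F^2 = n$, so $\sum_i \sigma_i^2 = n$; combined with $\sigma_1^2 = \|R\|_2^2 \geq (1-\delta)n$, this yields the slack bound $\sum_{i\geq 2} \sigma_i^2 \leq \delta n$. Since $\delta < 1/2$ gives $\sigma_1^2 > n/2 > \sigma_2^2$, the leading singular value is simple and the singular vector $u_1$ picked out by the paper's definition of $\Ron$ is unambiguous. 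Let $w := \sigma_1 u_1 \in \R^n$ and $s := \sgn(u_1) \in \R^n$, so that $\Ron(R) = s e_1^\top$ and $\Ron(R)\Ron(R)^\top = s s^\top$. Two elementary observations will drive everything else: (a) since $(RR^\top)_{ii} = \sum_k \sigma_k^2 (u_k)_i^2 = 1$ and every summand is nonnegative, $|w_i| = \sigma_1 |(u_1)_i| \leq 1$ for every $i$; and (b) $\sum_i (1 - w_i^2) = n - \sigma_1^2 \leq \delta n$.

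For part~(1), I will expand the Frobenius norm as
$$\|ss^\top - RR^\top\|_F^2 = \|s\|_2^4 - 2\|R^\top s\|_2^2 + \|RR^\top\|_F^2.$$
The easy bounds $\|s\|_2^4 \leq n^2$ and $\|RR^\top\|_F^2 = \sum_i \sigma_i^4 \leq \sigma_1^4 + \big(\sum_{i\geq 2}\sigma_i^2\big)^2 \leq \sigma_1^4 + \delta^2 n^2$ combine with the less obvious lower bound
$\|R^\top s\|_2^2 = \sum_i \sigma_i^2 (u_i^\top s)^2 \geq \sigma_1^2 (u_1^\top s)^2 = \big(\sum_i |w_i|\big)^2 \geq \sigma_1^4$,
where the last step uses (a) via $\sum_i |w_i| \geq \sum_i w_i^2 = \sigma_1^2$. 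The two $\sigma_1^4$ contributions cancel, leaving $\|ss^\top - RR^\top\|_F^2 \leq n^2 - \sigma_1^4 + \delta^2 n^2 \leq (2\delta - \delta^2)n^2 + \delta^2 n^2 = 2\delta n^2$, so $\|ss^\top - RR^\top\|_F \leq \sqrt{2\delta}\, n \leq 2\sqrt{\delta}\, n$, which is the bound \eqref{RRclose} (indeed slightly sharper than stated).

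For part~(2) I will strengthen to $\delta < 1/n^2$. Observation (b) then gives $\sum_i (1 - w_i^2) < 1/n$, and since every summand is nonnegative, each satisfies $1 - w_i^2 < 1/n \leq 1/2$, forcing $w_i \neq 0$ and $s_i \in \{-1, +1\}$; hence $\dd(ss^\top) = e$ holds automatically. The only remaining condition is $\Ron(R)^\top e = (\sum_i s_i)\, e_1 = 0$. From $R^\top e = 0$ I obtain $\sigma_i (u_i^\top e) = 0$ for every $i$ (by orthogonality of the $v_i$), so $\sigma_1 > 0$ forces $e^\top u_1 = 0$ and hence $\sum_i w_i = 0$. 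Writing $w_i = s_i |w_i|$ gives
$$\Big|\sum_i s_i\Big| = \Big|\sum_i s_i - \sum_i w_i\Big| = \Big|\sum_i s_i\(1 - |w_i|\)\Big| \leq \sum_i (1 - |w_i|) \leq \sum_i (1 - w_i^2) < \frac{1}{n} \leq 1,$$
where I used the elementary inequality $1 - |w_i| \leq 1 - w_i^2$ valid for $|w_i| \leq 1$. Since $\sum_i s_i$ is an integer of absolute value strictly less than $1$, it must equal $0$.

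The main obstacle in part~(1) is engineering the cancellation of $\sigma_1^4$ so that only the small slack $n - \sigma_1^2 \leq \delta n$ survives; this is what forces the use of $(\sum_i |w_i|)^2 \geq \sigma_1^4$ in place of cruder Cauchy--Schwarz estimates and makes the identity $|w_i|\le 1$ from (a) do real work. In part~(2) the subtle step is converting the analytic estimate $|\sum_i s_i| < 1/n$ into an exact equality by invoking integrality of $\sum_i s_i$; without the integer rounding argument one would only know $\sum_i s_i$ is small, not zero.
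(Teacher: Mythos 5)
Your proof is correct, and for the bound \eqref{RRclose} it takes a genuinely different route from the paper. The paper first bounds the rank-one approximation error $\|usv^\top-R\|^2=n-\|R\|_2^2\leq \delta n$, then controls $\|\sgn(u)-us\|$ by comparing with the projection of $usv^\top$ onto the oblique manifold $\OB_{n,r}\supset \B_{n,r}$, and finishes with a triangle inequality on $\sgn(u)\sgn(u)^\top-us^2u^\top$; you instead expand $\|ss^\top-RR^\top\|_F^2$ exactly and exploit the entrywise bound $|w_i|\leq 1$ coming from $\dd(RR^\top)=e$ to force the cancellation of $\sigma_1^4$. Your route buys two things: a slightly sharper constant ($\sqrt{2\delta}\,n$ versus $2\sqrt{\delta}\,n$), and it works directly with the full matrix $RR^\top$ rather than its rank-one part, thereby avoiding the step in the paper where $RR^\top$ is replaced by $us^2u^\top$ without accounting for the residual $\sum_{i\geq 2}\sigma_i^2u_iu_i^\top$ (that residual is of the right order $\delta n$, but your computation never needs to address it). For the second part both arguments are in the same spirit --- derive $R^\top e=0\Rightarrow u_1^\top e=0$ and convert a strict analytic bound $|e^\top\sgn(u)|<1$ into exact vanishing via integrality --- though you reach the bound through $\sum_i(1-|w_i|)\leq\sum_i(1-w_i^2)\leq\delta n$ rather than through Cauchy--Schwarz against $\|\sgn(u)-us\|$.
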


\begin{proof}
Let $s\in \R$ be the unique largest singular value of $R.$ Its corresponding singular vectors are $u\in \R^n$ and  $v\in \R^r.$ Then we have that 
\begin{equation}\label{May_7_1}
\| usv^\top-R\|^2=\|R\|^2-s^2=n-\|R\|_2^2\leq \delta n.
\end{equation}
Thus,
\begin{align}\label{May_7_2}
&\|\sgn(u)-us\|=\|\sgn(u)v^\top-usv^\top\|\leq \dist\(usv^\top,\OB_{n,r}\)\notag \\
&\leq \dist\(usv^\top,\B_{n,r}\)\leq \| usv^\top-R\|\leq \sqrt{\delta}\sqrt{n},
\end{align}
where the first inequality comes from the retraction of oblique manifold, the second inequality comes from $\B_{n,r}\subset \OB_{n,r},$ the third inequality comes from $R\in \B_{n,r}.$  We have that,
\begin{align}\label{May_7_3}
&\left\| \Ron\(R\)\Ron\(R\)^\top-RR^\top\right\|=\left\| \sgn(u)\sgn(u)^\top-us^2u^\top\right\|\notag \\
&\leq \left\| \sgn(u)\sgn(u)^\top-\sgn(u)su^\top\right\|+\left\|\sgn(u)su^\top-us^2u^\top\right\|\notag \\
&=\left\| \sgn(u)\right\| \left\|\( \sgn(u)^\top-su^\top\)\right\|+\left\| \(\sgn(u)-us\)\right\| \left\|su^\top\right\|\notag \\
&\leq 2\sqrt{n}.\sqrt{\delta}\sqrt{n}=2\sqrt{\delta}n.
\end{align}

Now suppose that $\delta<\frac{1}{n^2}.$ From (\ref{May_7_1}), $\| usv^\top-R\|^2<1.$ Since every row of $R$ has unit length, we have that every entry of $u$ is nonzero. Thus, $\sgn(u)\in \{-1,1\}^n$ and $\Ron(R)=\sgn(u)e_1^\top\in \OB_{n,r}.$ From (\ref{May_7_2}), we have that
\begin{equation}\label{May_7_4}
\left|e^\top \sgn(u)\right|=\left|e^\top\( \sgn(u)-us\)\right|\leq \sqrt{n}\|\sgn(u)-us\|\leq n\sqrt{\delta}<1,
\end{equation}
where the first equality comes from that because $R^\top e=0$ and $u$ is an singular vector of $R,$ $e^\top u=0.$ Because $\left|e^\top \sgn(u)\right|$ is an integer, we have that $\left|e^\top \sgn(u)\right|=0$ and so $\sgn(u)\in E^n.$ Therefore, $\Ron(R)=\sgn(u)e_1^\top\in \B_{n,r}.$
\end{proof}

\section{Escaping from a non-optimal singular point of $\B_{n,r}$}

In this section, we consider problem (\ref{SDP1}) with $\A=\dd(.)$ and its low rank version (\ref{GEPLR}) without the spectral upper bound,
i.e., the problems \eqref{BCSDP} and \eqref{BCsmo} in 
subsection 4.1.

%
%

Consider some singular point $R=ae_1^\top,$ where $a\in E^n.$ For any $[0,H]\in \T_{\B_{n,r}}(R)$ and any $[-a\circ\dd\(HH^\top\),W]\in \T^2_{\B_{n,r}}\(R,[0,H]\).$ Define 
\begin{equation}\label{May_2_15}
\widehat{R}(t,H,W)=\left[ a-\frac{t^2}{2}a\circ\dd\big(HH^\top\big),\ tH+\frac{t^2}{2}W\right].
\end{equation}
From the property of the tangent cone and second order tangent set, we have that 
\begin{equation}\label{May_2_16}
\P_{\B_{n,r}} \big(\widehat{R}(t,H,W)\big)=\widehat{R}(t,H,W)+o(t^2).
\end{equation}

Substituting $\widehat{R}(t,H,W)$ into $f,$ we have that
\begin{equation}\label{May_2_17}
f\big(\widehat{R}(t,H,W)\big)=\frac{a^\top Ca}{2}-\frac{t^2a^\top C\(a\circ\dd\(HH^\top\)\)}{2}+\frac{t^2\<CH,H\>}{2}+O(t^3).
\end{equation}
(\ref{May_2_16}) implies that $W$ has little influence on the function value and we can simply choose $W=0.$ Combining
(\ref{May_2_16}) and (\ref{May_2_17}), we have that
\begin{equation}\label{May_2_18}
f\(\P_{\B_{n,r}} \big(\widehat{R}(t,H,0)\big)\)=f(R)+t^2F(H)+o(t^2),
\end{equation}
where 
\begin{equation}\label{May_11_1}
F(H):=-\frac{a^\top C\(a\circ\dd\(HH^\top\)\)}{2}+\frac{\<CH,H\>}{2}=\frac{1}{2}\< \(C-\DD\big(Caa^\top\big)\)H,H\>.
\end{equation}

From (\ref{May_2_18}), we know that in order to reduce the function value along a certain direction as fast as possible, we should choose $H\in \T_{\B_{n,r}}(R)$ such that the coefficient of $t^2$ in (\ref{May_2_18}) is as small as possible. This introduces the following problem:

\begin{equation}\label{singopt}
\min\Bigg\{ F(H):\ e^\top H=0,\ a^\top\dd\big(HH^\top\big)=0,\ \|H\|^2=n,\ H\in \R^{n\times (r-1)} \Bigg\}.
\end{equation}
The constraint ensures that $[0,H]\in \T_{\B_{n,r}}(R).$ Also, we additionally fix the norm of $H$ to be $n$ because we only care about the direction rather than the length of the vector. The following lemma says that any feasible solution with a negative value provides us with a descent direction that can escape from a singular point.

\begin{lem}\label{escsing}
Suppose $r>1,$ $R=ae_1^\top\in \R^{n\times r}$ is a singular point of $\B_{n,r}$ for some $a\in E^n$ and $f$ is defined as in (\ref{BCsmo}). If problem (\ref{singopt}) has some feasible solution $H\in \R^{n\times (r-1)}$ such that its objective function value is negative. Then there exists some $\alpha<0$ such that 
\begin{equation}\label{decrease}
f\(\P_{\B_{n,r}} \big(\widehat{R}(t,H,0)\big)\)=f(R)+\alpha t^2+o(t^2).
\end{equation}
Moreover, $\P_{\B_{n,r}} \big(\widehat{R}(t,H,0)\big)$ is a smooth point of $\B_{n,r}$ for any nonzero $t$ that is sufficiently small. 
\end{lem}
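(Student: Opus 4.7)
The plan is to extract the first assertion directly from equation~(\ref{May_2_18}) and then argue separately that the projection cannot land on a singular point of $\B_{n,r}$ for small nonzero $t$. Setting $\alpha := F(H)$, which is negative by hypothesis, the desired expansion
$f\bigl(\P_{\B_{n,r}}(\widehat{R}(t,H,0))\bigr) = f(R) + \alpha t^2 + o(t^2)$
is exactly (\ref{May_2_18}), so only the smoothness claim requires additional work.

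A useful preliminary observation is that $F$ vanishes on the rank-one directions $a\R^{1\times(r-1)}$. Indeed, since $a_i^2=1$ for every $i$, a direct computation gives $a^\top\DD(Caa^\top)a = \sum_i a_i^2(Caa^\top)_{ii} = {\rm tr}(Caa^\top) = a^\top Ca$, so substituting $H=a\lambda^\top$ into (\ref{May_11_1}) yields $F(a\lambda^\top) = \tfrac{1}{2}\|\lambda\|^2\bigl(a^\top Ca - a^\top Ca\bigr)=0$. Since $F(H)<0$ by hypothesis, $H$ is not of the form $a\lambda^\top$, and therefore at least one column of $H$ is not parallel to $a$.

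For the smoothness claim I argue by contradiction. Suppose there is a sequence $t_k\to 0$, $t_k\ne 0$, along which $\P_{\B_{n,r}}(\widehat{R}(t_k,H,0))$ is singular. By Proposition~\ref{decomp} it equals $v(t_k)b(t_k)^\top$ with $v(t_k)\in E^n$ and $b(t_k)\in{\rm S}^{r-1}$; (\ref{May_2_16}) forces convergence to $ae_1^\top$, and since $E^n$ is discrete, $v(t_k)\in\{a,-a\}$ eventually. Passing to a subsequence we take $v(t_k)=a$, so $b(t_k)\to e_1$, whence $b_1(t_k)\to 1$ and the unit-norm condition forces $b_j(t_k)=O(t_k)$ for $j\ge 2$. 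Matching the $j$-th column of the projection against the $j$-th column of $\widehat{R}(t_k,H,0)+o(t_k^2)$ yields $a\,b_j(t_k) = t_k h_{j-1} + o(t_k^2)$, hence $h_{j-1} = (b_j(t_k)/t_k)\,a + o(1)$. Since $b_j(t_k)/t_k$ is bounded, passing to a further convergent subsequence forces $h_{j-1}$ to be a scalar multiple of $a$ for every $j$, i.e.\ $H=a\mu^\top$, contradicting the previous paragraph. The main obstacle is precisely this contradiction step: because $\P_{\B_{n,r}}$ is neither single-valued nor continuous near singular points, one cannot invoke continuity of rank directly, and the argument must instead exploit the explicit rank-one description of singular points from Proposition~\ref{decomp} to extract information from the $o(t^2)$ remainder by matching columns of order $t$ and $t^2$.
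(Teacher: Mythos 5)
Your proposal is correct, and the first half (reading off \eqref{decrease} from \eqref{May_2_18} with $\alpha=F(H)$) is exactly what the paper does. For the smoothness claim, however, you take a genuinely different route. The paper splits the singular points into two classes: those of the form $ab^\top$ with the \emph{same} sign vector $a$, which all satisfy $f(ab^\top)=\tfrac12 a^\top Ca=f(R)$ and are therefore excluded by the strict decrease just established; and those with a different sign vector $\hat a\in E^n$, which satisfy $\|R-\hat a b^\top\|_F^2=2n-2(a^\top\hat a)(e_1^\top b)\ge 2$ and are excluded because $\P_{\B_{n,r}}(\widehat R(t,H,0))=R+O(t)$. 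You instead first observe that $F$ vanishes identically on the rank-one cone $a\R^{1\times(r-1)}$ (a correct computation, using $a_i^2=1$ to get $a^\top\DD(Caa^\top)a=a^\top Ca$), so $F(H)<0$ forces $H\notin a\R^{1\times(r-1)}$; you then rule out singular projections by matching the order-$t$ columns of a hypothetical rank-one limit $v(t_k)b(t_k)^\top$ against $\widehat R(t_k,H,0)+o(t_k^2)$, which would force every column of $H$ to be parallel to $a$. Both arguments are sound. The paper's is shorter because it recycles the function-value decrease and only needs the crude $O(t)$ estimate for the second class of singular points; yours is more structural and isolates the fact that $F\equiv 0$ on $a\R^{1\times(r-1)}$, which is the same fact the paper later exploits (as $\|a^\top H\|^2<n\|H\|^2$) in the proof of Proposition~\ref{singgm}, so your preliminary observation would in fact streamline that later proof as well. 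One cosmetic remark: the reduction to $v(t_k)=a$ is by the symmetry $vb^\top=(-v)(-b)^\top$ rather than by passing to a subsequence, but this does not affect the argument.
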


\begin{proof}
For any feasible solution $H$ of problem (\ref{singopt}), the constraints of (\ref{singopt}) and (\ref{sgtcone}) implies that $[0,H]\in \T_{\B_{n,r}}(R).$ Therefore, from (\ref{May_2_18}) and (\ref{May_11_1}), we have that
\begin{equation}\label{May_2_20}
f\(\P_{\B_{n,r}} \big(\widehat{R}(t,H,0)\big)\)=f(R)+F(H) t^2+o(t^2).
\end{equation}
Thus, if $F(H)<0$, then (\ref{decrease}) holds by defining $\alpha:=F(H).$

Now we move on to prove the smoothness of $\P_{\B_{n,r}} \big(\widehat{R}(t,H,0)\big)$ on the condition that $F(H)<0.$ First, since $F(H)<0,$ from (\ref{decrease}), we have that there exists $\epsilon>0$ such that $f\big(\P_{\B_{n,r}} \big(\widehat{R}(t,H,0)\big)\big)<f(R)$ for any $0<|t|<\epsilon.$ Note that for any singular point $R_1$ of $\B_{n,r},$ if $R_1=ab^\top$ for some $b\in {\rm S}^{r-1},$ then we have that $f(R_1)=f(R).$ This means that 
$\P_{\B_{n,r}} \big(\widehat{R}(t,H,0)\big)$ can not be $R_1$ for any $0<|t|<\epsilon.$ On the other hand, if 
$R_1\notin a({\rm S^{r-1}})^\top$, then we have that $R_1=\hat{a}b^\top$ for some $\hat{a}\in E^n$ and $b\in {\rm S}^{r-1}$ such that $|a^\top \hat{a}|\leq n-1.$ Hence
\begin{align}
&\| R-R_1\|_F^2=\|R\|^2+\|R_1\|^2-2\<R,R_1\>=2n-2\<ae_1^\top,\hat{a}b^\top\>=2n-2a^\top \hat{a}e_1^\top b\notag \\
&\geq 2n-2|a^\top \hat{a}|\geq 2n-2(n-1)=2.\label{May_2_21}
\end{align}
From (\ref{May_2_15}) and (\ref{May_2_16}), we have that $\P_{\B_{n,r}} \big(\widehat{R}(t,H,0)\big)=R+O(t).$ This together with (\ref{May_2_21}) implies that there exists $\epsilon_1>0$ such that $\P_{\B_{n,r}} \big(\widehat{R}(t,H,0)\big)$ can not be $R_1$ for any $|t|<\epsilon_1.$ Thus, $\P_{\B_{n,r}} \big(\widehat{R}(t,H,0)\big)$ is smooth for any $0<|t|<\min\left\{\epsilon_1,\epsilon_2\right\}.$
\end{proof}

Lemma~\ref{escsing} tells us that we can find a descent direction at a singular point by solving (\ref{singopt}). With Lemma~\ref{escsing}, we make the following definition.

\begin{defi}\label{escdefi}
Suppose $r>1,$ $R=ae_1^\top\in \R^{n\times r}$ is a singular point of $\B_{n,r}$ for some $a\in E^n$ and $f$ is defined as in (\ref{BCsmo}). $H\in \R^{n\times (r-1)}$ is called an escaping direction of $(\ref{BCsmo})$ at $R=ae_1^\top$ if it is a feasible solution of (\ref{singopt}) with a negative objective value.
\end{defi}

Note that (\ref{singopt}) is a non-convex problem which may have spurious local minima. Moreover, if there does not exist a feasible solution with a negative objective function value for (\ref{singopt}), then what can we tell about $R$? In order to handle these two problems, we consider the following SDP relaxation of (\ref{singopt}):
\begin{equation}\label{singSDP}
\min\Bigg\{ \<\(C-\DD\big(Caa^\top\big)\),X\>: 
\begin{array}{l}
Xe=0,\ \<X,ee^\top\>=0,\ \< \dd(a),X\>=0,\ 
 \\
\<I,X\>=n, X\in \S^n_+
\end{array}
\Bigg\}. 
\end{equation}
Note that we add the redundant constraint $Xe=0$ similarly as in problem (\ref{SDP1}). Let $J=PP^\top,$ where $P\in \St(n,n-1).$ For any feasible solution $X$ of (\ref{singSDP}), we have that $X=JXJ=PP^\top XPP^\top.$ By introducing $Y:=P^\top XP,$ problem (\ref{singSDP}) is equivalent to the following SDP problem with a smaller size.
\begin{equation}\label{singSDP1}
\min\left\{ \<C_P,Y\>:\ \< P^\top\dd(a)P,Y\>=0,\ \<I,Y\>=n,\ Y\in \S^{n-1}_+\right\},
\end{equation}
where $C_P:=P^\top\(C-\DD\(Caa^\top\)\)P\in \S^{n-1}.$ The Lagrangian dual problem of (\ref{singSDP1}) is as follows:
\begin{equation}\label{singSDPd}
\max\left\{ ny_1:\ C_P-y_1I-y_2 P^\top\dd(a)P\in \S^{n-1}_+,\ y_1,y_2\in \R \right\}.
\end{equation}
Note that we have 
\begin{equation}\label{May_4_1}
\<P^\top \dd\(a\)P,I\>=\<\dd(a),J\>=\<\dd\(a\),\dd\(J\)\>=\frac{(n-1)}{n}\<a,e\>=0.
\end{equation}
(\ref{May_4_1}) implies that $Y=\frac{nI}{n-1}$ is a strict feasible solution of (\ref{singSDP1}). Also, we may choose $y_1$ to be sufficiently negative to make $\(y_1,y_2\)$ strictly feasible for (\ref{singSDPd}). The Slater's conditions for primal and dual problem imply that the duality gap is zero and the optimal solution sets are non-empty for (\ref{singSDP1}) and (\ref{singSDPd}). Thus, problem (\ref{singSDP1}) has a KKT solution. It is easy to check that $P^\top aa^\top P$ is a feasible solution of $(\ref{singSDP1})$ with optimal value $0.$ Thus, the optimal value of (\ref{singSDP1}) is non-positive. The following theorem tells that we can check the global optimality of a singular point and escape from a non-optimal singular point of (\ref{BCsmo}) by solving the SDP problem (\ref{singSDP1}). 

\begin{theo}\label{esctheo}
Suppose $r>1,$ $R=ae_1^\top\in \R^{n\times r}$ is a singular point of $\B_{n,r}$ for some $a\in E^n.$ Let $\(Y,\lambda_1,\lambda_2\)$ be a KKT solution of (\ref{singSDP1}).
\begin{itemize}
\item [(i)] If $\<C_P,Y\>=0,$ then $aa^\top$ is a global optimal solution of (\ref{BCSDP}).
\item [(ii)] If $\<C_P,Y\><0,$ and $Y=HH^\top$ for some $H\in \R^{(n-1)\times (r-1)},$ then $PH$ is an escaping direction of (\ref{BCsmo}) at $R=ae_1^\top.$
\end{itemize}
\end{theo}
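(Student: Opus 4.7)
For part (i), the plan is to verify that $aa^\top$ satisfies a sufficient optimality condition for the convex SDP (\ref{BCSDP}) by constructing a suitable dual variable from the KKT data of (\ref{singSDP1}). First I will check feasibility: since $a\in E^n\subset\{-1,1\}^n$, we have $\dd(aa^\top)=a\circ a=e$, $aa^\top e=a(e^\top a)=0$, and $\<aa^\top,ee^\top\>=(e^\top a)^2=0$, so $aa^\top$ is feasible for (\ref{BCSDP}). For optimality I will invoke the natural analogue of Proposition~\ref{KKTcvx} for (\ref{BCSDP}) (with $\Omega_\alpha^1$ replaced by $\S^n_+$; the proof is identical since it only uses convexity of the feasible set). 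This sufficient condition amounts to finding $y\in\R^n$ with $J(C-\dd(y))J\succeq 0$ and $\<J(C-\dd(y))J,aa^\top\>=0$. Using $Ja=a$ together with PSDness, the complementarity condition forces $J(C-\dd(y))Ja=0$, which combined with $Je=0$ is equivalent to $(C-\dd(y))a\in\mathrm{span}(e)$. Exploiting $a\circ a=e$, this pins $y$ down to the parametric form $y=(Ca)\circ a-\gamma a$ for some $\gamma\in\R$, giving $\dd(y)=\DD(Caa^\top)-\gamma\dd(a)$. Since $J=PP^\top$ and $P^\top e=0$, the PSD requirement then collapses to $C_P+\gamma P^\top\dd(a)P\succeq 0$. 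By the Slater-based strong-duality argument recorded just before the theorem statement, the dual (\ref{singSDPd}) of (\ref{singSDP1}) attains its optimum, and $n\lambda_1=\<C_P,Y\>=0$ forces $\lambda_1=0$; dual feasibility then gives $C_P-\lambda_2 P^\top\dd(a)P\succeq 0$. Setting $\gamma=-\lambda_2$ supplies the required $y$, completing the argument for (i).

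For part (ii), the plan is direct verification that $PH\in\R^{n\times(r-1)}$ is a feasible point of (\ref{singopt}) with negative objective. The three constraints check out as follows: $e^\top(PH)=(P^\top e)^\top H=0$; $a^\top\dd\bigl(PH(PH)^\top\bigr)=\<\dd(a),PYP^\top\>=\<P^\top\dd(a)P,Y\>$, which vanishes by the first affine constraint of (\ref{singSDP1}); and $\|PH\|_F^2=\<I,PYP^\top\>=\<P^\top P,Y\>=\<I,Y\>=n$ by the trace constraint. The objective then evaluates to $F(PH)=\tfrac12\<P^\top(C-\DD(Caa^\top))P,HH^\top\>=\tfrac12\<C_P,Y\>$, which is negative by assumption. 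Hence by Definition~\ref{escdefi}, $PH$ is an escaping direction of (\ref{BCsmo}) at $R=ae_1^\top$.

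The main technical point is the conjugation used in (i): translating the reduced $(n-1)\times(n-1)$ dual PSD inequality from (\ref{singSDPd}) back to a full $n\times n$ PSD condition via the identity $JSJ=P(P^\top SP)P^\top$, and exploiting $P^\top e=0$ so that the extra dual variables associated with the constraints $Xe=0$ and $\<X,ee^\top\>=0$ drop out entirely. Once this dictionary between the full and reduced SDPs is in place, part (i) follows by elementary linear algebra, and part (ii) is a routine computation that needed no additional machinery.
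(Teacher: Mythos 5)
Your proposal is correct and follows essentially the same route as the paper: part (i) hinges on strong duality for the reduced SDP (\ref{singSDP1}) and on conjugating the dual slack $C_P-\lambda_1 I-\lambda_2 P^\top\dd(a)P\succeq 0$ back to the full space via $J=PP^\top$ to produce the dual certificate $\mu=\dd(Caa^\top)-\lambda_1 e-\lambda_2 a$ (your $y$ with $\lambda_1=0$ is the same object), and part (ii) is the same direct feasibility-plus-negative-value verification. The only cosmetic difference is that the paper phrases (i) by observing that $P^\top aa^\top P$ is itself optimal for (\ref{singSDP1}) and pairing it with the given multipliers, whereas you construct the certificate for (\ref{BCSDP}) directly; both arguments are sound.
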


\begin{proof}
We first prove (i). $\<C_P,Y\>=0$ implies that the optimal value of (\ref{singSDP1}) is zero. Because $P^\top aa^\top P$ is also a feasible solution of $(\ref{singSDP1})$ with optimal value $0,$ $\(P^\top aa^\top P,\lambda_1,\lambda_2\)$ is a KKT solution of (\ref{singSDP1}). We have the following KKT conditions hold for $\(P^\top aa^\top P,\lambda_1,\lambda_2\):$
\begin{align}
& \< P^\top \dd(a)P,P^\top aa^\top P\>=0,\ \<I,P^\top aa^\top P\>=n,\ P^\top aa^\top P\in \S^{n-1}_+.\label{KKTp}\\
& C_P-y_1 I-y_2 P^\top\dd(a)P\in \S^{n-1}_+.\label{KKTd}\\
& \< C_P-y_1 I-y_2 P^\top\dd(a)P,P^\top aa^\top P\>=0.\label{KKTc}
\end{align}
Note that (\ref{KKTp}) holds for any $a\in E^n.$ Because $C_P=P^\top\(C-\DD\(Caa^\top\)\)P,$ (\ref{KKTd}) implies that:
\begin{equation}\label{May_5_0.5}
P\(P^\top\(C-\DD\big(Caa^\top\big)\)P-y_1I-y_2P^\top \dd(a)P\)P^\top\in \S^n_+.
\end{equation}
After using $PP^\top=J$ and $J^2=J$ in (\ref{May_5_0.5}), we get
\begin{equation}\label{May_5_1}
J\(C-\DD\big(Caa^\top\big)-y_1I-y_2 \dd(a)\)J\in \S^n_+,
\end{equation}
and (\ref{KKTc}) implies that
\begin{equation}\label{May_5_2}
\< J\(C-\DD\big(Caa^\top\big)-y_1I-y_2 \dd(a)\)J, aa^\top \>=0.
\end{equation}
Define $\mu:=\dd\(Caa^\top\)-y_1e-y_2 a.$ Combining (\ref{May_5_1}), (\ref{May_5_2}) with feasibility of $aa^\top$, we have that:
\begin{align}
& \dd\big(aa^\top\big)=e,\ aa^\top e=0,\ \<aa^\top,ee^\top\>=0,\ aa^\top\in \S^n_+.\\
& J\( C-\dd(\mu)\)J\in \S^n_+.\\
& \< J\(C-\dd(\mu)\)J, aa^\top \>=0.
\end{align}
The above equations are exactly the KKT condition of (\ref{BCSDP}). Therefore, $aa^\top$ is an optimal solution of (\ref{BCSDP}).

Now we move on to prove (ii). Because $Y$ is feasible for (\ref{singSDP1}), $PYP^\top$ is feasible for (\ref{singSDP}). Also, because $\<C_P,Y\><0,$  we have that 
$\<C-\DD\big(Caa^\top\big),\,PYP^\top\><0.$ Therefore, $PH$ is a feasible solution of (\ref{singopt}) with a negative function value. From Definition~\ref{escdefi}, $PH$ is an escaping direction of (\ref{BCsmo}) at $R=ae_1^\top.$
\end{proof}

Theorem~\ref{esctheo} says that when we come to a singular point $ae_1^\top$ for some $a\in E^n.$ We can first solve another SDP problem (\ref{singSDP1}), which always has a KKT solution. If the optimal value for (\ref{singSDP1}) is zero, then $aa^\top $ is already an optimal solution of (\ref{BCSDP}). Moreover, from the proof of Theorem~\ref{esctheo}, we can get the optimal dual variable of (\ref{BCSDP}) directly from the optimal dual variable of (\ref{singSDP1}). If the optimal value for (\ref{singSDP1}) is negative and the optimal solution has rank $\leq r-1$, then we can construct an escaping direction from the optimal solution of (\ref{singSDP1}).

One may wonder  whether it is too expensive to escape from a singular point by solving an SDP with the matrix variable of size $(n-1)\times (n-1).$ Actually, problem (\ref{singSDP1}) is simple because there are only 2 constraints and there exists an optimal solution of rank $\sqrt{2\cdot 2}=2.$ This implies that we can use the Burer and Monteiro factorization to solve (\ref{singSDP1}) efficiently. Moreover, we can terminate the algorithm in advance when its primal feasibility is small enough and its function value is negative because an escaping direction does not require that $H$ to be a minimizer of (\ref{singopt}).

Next we discuss
 how to compute the retraction 
$\P_{\B_{n,r}} \big(\widehat{R}(t,H,0)\big)$ for a singular point $R=ae_1^\top$ such that $a\in E^n.$ In section 3.3, we have only discussed the retraction around a smooth point. When it comes to a singular point, suppose $H\in \R^{n\times (r-1)}$ is an escaping direction of (\ref{BCsmo}). For the maximum eigenvalue function $\lambda_{\max}(\cdot),$ since $n$ is the only nonzero eigenvalue of $RR^\top=aa^\top,$ $\lambda_{\max}$ is differentiable at $RR^\top$ and the gradient is given by:
\begin{equation}\label{May_5_3}
\nabla \lambda_{\max}\big( RR^\top\big)=\frac{1}{n} aa^\top.
\end{equation}
The following proposition tells that the geometric median approach in 
{subsection \ref{subsec-retraction}} also works for a singular point.
\begin{prop}\label{singgm}
Suppose $r>1,$ $R=ae_1^\top\in \R^{n\times r}$ is a singular point of $\B_{n,r}$ for some $a\in E^n$, and $H\in \R^{n\times (r-1)}$ is an escaping direction of $(\ref{BCsmo})$ at $R=ae_1^\top.$ There exists $\delta>0$ such that for any $0<|t|<\delta,$ 
there exists $0<\beta<1$ such that $\|\widehat{R}(t,H,0)\|<(1-\beta)\sqrt{n}$ and 
$\dist\big(\widehat{R}(t,H,0),\B_{n,r}\big)<\frac{\beta\sqrt{n}}{\sqrt{n}+1}.$
\end{prop}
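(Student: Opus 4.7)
The plan is to track how the spectral norm of $\widehat{R}(t,H,0)$ and its distance to $\B_{n,r}$ behave as $t\to 0$, and then set $\beta=\beta(t)$ of order $t^2$ so both inequalities hold simultaneously. The distance bound is essentially free: by (\ref{May_2_16}) we have $\P_{\B_{n,r}}\big(\widehat{R}(t,H,0)\big)=\widehat{R}(t,H,0)+o(t^2)$, so $\dist\big(\widehat{R}(t,H,0),\B_{n,r}\big)=o(t^2)$. The core task is therefore to prove $\|\widehat{R}(t,H,0)\|_2^2=n-ct^2+o(t^2)$ for some constant $c>0$ that depends on $H$ but not on $t$.

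For that, I would expand
$$\widehat{R}(t,H,0)\widehat{R}(t,H,0)^\top \;=\; aa^\top + t^2 M + O(t^4),\qquad M:=HH^\top -\tfrac{1}{2}\big(ad^\top+da^\top\big),$$
with $d:=a\circ \dd(HH^\top)$. Because $aa^\top$ has the simple leading eigenvalue $n$ with unit eigenvector $a/\sqrt{n}$, first-order perturbation of the largest eigenvalue (treating $t^2$ as the small parameter) gives
$$\lambda_{\max}\big(\widehat{R}\widehat{R}^\top\big) \;=\; n + \frac{t^2}{n}\, a^\top M a + o(t^2).$$
Using $a_i^2=1$ and $\|H\|_F^2=n$, a direct calculation produces $a^\top d=\|H\|_F^2=n$ and hence $a^\top M a=\|H^\top a\|_2^2-n^2\le 0$, with equality precisely when $H=ab^\top$ for some $b\in {\rm S}^{r-1}$.

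The main obstacle is to upgrade this to a strict inequality. I would rule out $H=ab^\top$ using Definition~\ref{escdefi}: plugging $H=ab^\top$ into the expression $F(H)=\tfrac{1}{2}\<(C-\DD(Caa^\top))H,H\>$ from (\ref{May_11_1}) and exploiting $a_i^2=1$ shows that both $\<CH,H\>$ and $\<\DD(Caa^\top)H,H\>$ equal $a^\top Ca$, so $F(ab^\top)=0$. Since an escaping direction satisfies $F(H)<0$, we conclude $H\neq ab^\top$ and therefore $\|H^\top a\|_2^2<n^2$ strictly. Setting $c:=\big(n^2-\|H^\top a\|_2^2\big)/n>0$ gives $\lambda_{\max}\big(\widehat{R}\widehat{R}^\top\big)=n-ct^2+o(t^2)$ as required.

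To finish, I would choose $\beta(t):=\frac{c}{4n}\,t^2$. For $|t|$ small enough we have $0<\beta(t)<1$, and the elementary bound $\sqrt{1-x}\le 1-x/2$ combined with the spectral expansion gives $\|\widehat{R}(t,H,0)\|_2<(1-\beta(t))\sqrt{n}$. Moreover $\beta(t)\sqrt{n}/(\sqrt{n}+1)=\Theta(t^2)$ strictly dominates the $o(t^2)$ bound on $\dist\big(\widehat{R}(t,H,0),\B_{n,r}\big)$ for $|t|$ small, so both required inequalities hold on a common interval $0<|t|<\delta$.
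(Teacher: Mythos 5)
Your proposal is correct and follows essentially the same route as the paper: expand $\widehat{R}(t,H,0)\widehat{R}(t,H,0)^\top$ to order $t^2$, use differentiability of $\lambda_{\max}$ at the simple top eigenvalue of $aa^\top$ to get $\lambda_{\max}=n-ct^2+o(t^2)$, rule out the degenerate case $H=ab^\top$ via $F(ab^\top)=0$ contradicting the escaping-direction property, invoke (\ref{May_2_16}) for the $o(t^2)$ distance bound, and take $\beta$ of order $t^2$. The only cosmetic difference is that you spell out the computation $F(ab^\top)=0$, which the paper only asserts.
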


\begin{proof}
From (\ref{May_2_15}), 
\begin{align}
\widehat{R}(t,H,0)\widehat{R}(t,H,0)^\top=&\;\;
aa^\top-\frac{t^2}{2}\( a\,\dd\big(HH^\top\big)^\top \dd(a)+\dd(a)\dd\big(HH^\top\big)a^\top \)\notag \\
&+t^2HH^\top+O(t^3).\label{May_5_4}
\end{align}
Thus, from (\ref{May_5_3}), we have that
\begin{equation}\label{May_5_5}
\lambda_{\max}\big( \widehat{R}(t,H,0)\widehat{R}(t,H,0)^\top \big)
=n-t^2 \big(n\|H\|^2-\|a^\top H\|^2\big)+O(t^3),
\end{equation}
where we have used the fact that $a\in E^n$ and so $a^\top a=n,$ $\dd(a)a=e.$
Using the Cauchy-Schwarz inequality on every entry of $a^\top H$, we have that 
\begin{equation}\label{May_5_6}
\|a^\top H\|^2\leq \|a\|^2\|H\|^2=n\|H\|^2.
\end{equation}
The equality is attained in (\ref{May_5_6}) if and only if $H=ab^\top$ for some vector $b\in \R^{r-1}.$ However, this implies that the objective function value of problem (\ref{singopt}) is $0,$ which contradicts to that $H$ is an escaping direction. Thus, we have that $\|a^\top H\|^2<n\|H\|^2.$ From the (\ref{May_5_5}) and relation between eigenvalue and singular value, we have that 
\begin{equation}\label{May_5_7}
\| \widehat{R}(t,H,0)\|_2=\sqrt{n}\(1-\frac{t^2}{2n}\(n\|H\|^2-\|a^\top H\|^2\)\)+O(t^3).
\end{equation}
Also, from (\ref{May_2_16}), we have that 
\begin{equation}\label{May_5_8}
\dist\big( \widehat{R}(t,H,0),\B_{n,r}\big)=o(t^2).
\end{equation}
(\ref{May_5_7}) together with (\ref{May_5_8}) implies that if we choose $\beta_t:=\frac{t^2}{4n}\(n\|H\|^2-\|a^\top H\|^2\)>0,$ then there exists $\delta>0$ such that for any $0<|t|<\delta,$ such that $\|\widehat{R}(t,H,0)\|<(1-\beta_t)\sqrt{n}$ and 
$\dist\big(\widehat{R}(t,H,0),\B_{n,r}\big)<\frac{\beta_t\sqrt{n}}{\sqrt{n}+1}.$
\end{proof}

Proposition~\ref{singgm} and Proposition~\ref{analysis} imply that our retraction technique is also useful when escaping from a singular point.

\section{Proof details of some results}

\subsection{Proof of Proposition~\ref{pcone}}
\begin{proof}
We only have to prove that $\T_{\B_{n,r}}(R)\subset \widetilde{\T}\subset \T^i_{\B_{n,r}}(R)$ because we already have $\T^i_{\B_{n,r}}(R)\subset \T_{\B_{n,r}}(R).$ \\
\noindent{\bf Step 1.} $\T_{\B_{n,r}}(R)\subset \widetilde{\T}.$\\
For any $U\in \T_{\B_{n,r}}(R),$ from definition (\ref{ctancone}) there exists $t_k\downarrow 0$ such that $\dd\( R+t_k U,\B_{n,r}\)=o(t_k).$ Let $U=[h,H],$ where $h\in \R^n$ and $H\in \R^{n\times (r-1)}.$ Define 
$$R_k\in \arg\min\left\{ \| S-\(R+t_k U\)\|_F^2:\ S\in \B_{n,r}\right\}$$ 
From the property of the tangent cone, we have that 
$$R_k=R+t_k U+o(t_k)=\( a+t_kh+o(t_k),\ t_kH+o(t_k)\).$$
For $k$ sufficiently large, we also have that 
\begin{equation}\label{May_1_1}
R_k=\(\dd\(a\)\sqrt{e-\dd\(\( t_kH+o(t_k)\)\(t_kH+o(t_k)\)^\top\)},\ t_kH+o(t_k)\),
\end{equation}
where the square root above means componentwise operations. Note that in (\ref{May_1_1}), the first column of $R_k$ comes from the fact that $\dd\(R_kR_k^\top\)=e$ and $R_k$ is close enough to $R.$ The Taylor expansion of (\ref{May_1_1}) gives:
\begin{equation}\label{May_1_1.5}
R_k=\(\dd\(a\)\Big(e-\frac{t_k^2 \dd\(HH^\top\)}{2}\Big)+o(t_k^2),\ t_kH+o(t_k)\).
\end{equation}
Comparing (\ref{May_1_1.5}) with $R_k=\(a+t_k h+o(t_k),\ t_k H+o(t_k)\),$ we have that 
$h=0$ and so $U=[0,H].$ Also, because $e^\top R_k=0,$ we have that $e^\top H=0$ and $a^\top \dd\(HH^\top\)=0.$ Therefore, we have that $U\in \widetilde{\T}$, and hence
$\T_{\B_{n,r}}(R)\subset \widetilde{\T}.$\\

\noindent{\bf Step 2.} $\widetilde{\T}\in \T^i_{\B_{n,r}}\(R\).$\\
For any nonzero $U=[0,H]\in \widetilde{\T},$ we have that $e^\top H=0$ and $a^\top\dd\( HH^\top\)=0.$ Consider the following mapping $\widehat{R}:\R\times \R^{n\times (r-1)}\rightarrow \R^{n\times r}$:
\begin{equation}\label{May_1_4}
\widehat{R}(t,W):=\( \dd\(a\)\sqrt{ e-\dd\( \Big(tH+\frac{t^2 W}{2}\Big) \Big(tH+\frac{t^2 W}{2}\Big)^\top\)},\; tH+\frac{t^2W}{2}\).
\end{equation}
It is easy to see that $\widehat{R}(t,W)\in \OB_{n,r}$ for $t$ small enough. Our next step is to find $W$ that satisfies $e^\top \widehat{R}(t,W)=0.$ In order to do this, consider the Taylor expansion of $e^\top \widehat{R}(t,W)$ with respect to $t$:
\begin{align}
&e^\top \widehat{R}(t,W)=\( a^\top\sqrt{ e-\dd\( \Big(tH+\frac{t^2 W}{2}\Big) \Big(tH+\frac{t^2 W}{2}\Big)^\top\)},\; te^\top H+\frac{t^2e^\top W}{2}\)\label{May_1_4.5}\\
&=\(a^\top\( e-\frac{1}{2}\dd\(\Big(tH+\frac{t^2 W}{2}\Big)\Big(tH+\frac{t^2W}{2}\Big)^\top\) 
-\frac{1}{8}t^4\dd^2\big(HH^\top\big)
+t^5g(t,W)\),\frac{t^2 e^\top W}{2}\)\notag \\
&=\( -\frac{t^3a^\top \dd\(HW^\top\)}{2}-\frac{t^4 a^\top \dd\(WW^\top\)}{8}-\frac{t^4 a^\top \dd^2\(HH^\top\)}{8}+t^5a^\top g(t,W),\frac{t^2e^\top W}{2}\)\notag,
\end{align}
where $g(t,W):\R\times \R^{n\times (r-1)}\rightarrow \R^{n\times r}$ is a smooth mapping when $W$ is bounded and $t$ is small enough. Note that we have used $e^\top H=0$ in the second equality and $a^\top\dd\( HH^\top\)=0,$ $e^\top a=0$ in the third equality. Consider the following two cases:\\

\noindent{\bf Case 1.} $H\notin a\R^{1\times (r-1)}$\\
In this case, consider the following mapping:
\begin{equation}\label{May_1_5}
F(t,W):=\Big( -\frac{a^\top \dd\(HW^\top\)}{2}-\frac{t a^\top \dd\(WW^\top\)}{8}-\frac{t a^\top \dd^2\(HH^\top\)}{8}+t^2a^\top g(t,W),\frac{e^\top W}{2}\Big).
\end{equation}
We have that $F(0,0)=0$ and $\D_W F(0,0)[\triangle W]=\(-\frac{1}{2}{a^\top \dd\(H\triangle W^\top\)},\frac{1}{2}{e^\top \triangle W}\).$ For any $\(\alpha,\lambda\)\in \R\times \R^{r-1},$ $\D_W F(0,0)^*\(\alpha,\lambda\)=-\frac{\alpha}{2}\dd\(a\)H+\frac{e\lambda^\top}{2}=\dd\(a\)\big( -\frac{\alpha}{2}H+\frac{a\lambda^\top}{2}\big).$ 
Because $H\notin a\R^{1\times (r-1)},$ we have that $\D_W F(0,0)^*\(\alpha,\lambda\)=0$ if and only if $\(\alpha,\lambda\)=0.$ This implies that $\D_W F(0,0)$ is surjective (or full rank). From Lemma~\ref{implicit}, we have that there exists $\delta>0$ and a continuously differentiable mapping $W(t):(-\delta,\delta)\rightarrow \R^{n\times (r-1)}$ such that $W(0)=0$ and $F(t,W(t))=0$ for $t$ sufficiently small. From (\ref{May_1_4.5}), $e^\top \widehat{R}\(t,W(t)\)=0$ for $t$ sufficiently small. This implies that when $t$ is small enough, $\widehat{R}(t,W(t))\in \B_{n,r}.$ Because $\dist\big( \widehat{R}(t,W(t)),R+tU\big)=O(t^2),$ we have that 
$\dist\big(R+tU,\B_{n,r}\big)=O(t^2).$ Thus, $U\in \T^i_{\B_{n,r}}\(R\).$\\

\noindent{\bf Case 2.} $H=a\lambda^\top$ for some $\lambda\in \R^{r-1}.$\\
In this case, we choose $W\equiv 0.$ Because $\dd\big(HH^\top\big)=\|\lambda\|^2e,$ we have that $e^\top \widehat{R}(t,W)=0,$ for $t$ small enough, which in addition implies that 
$\hat{R}\( t,W(t)\)\in \B_{n,r}$ for $t$ sufficiently small. After a similar analysis as in Case 1, we get $\dist\(R+tU,\B_{n,r}\)=O(t^2).$ Thus, $U\in \T^i_{\B_{n,r}}\(R\).$
\end{proof}

\subsection{Proof of Proposition~\ref{sectan}}
\begin{proof}
We only have to prove $\T^{2}_{\B_{n,r}}(R,H)\subset \widetilde{\T^2}(R,H)\subset \T^{i,2}_{\B_{n,r}}(R,H),$ since we already have $\T^{i,2}_{\B_{n,r}}(R,H)\subset \T^{2}_{\B_{n,r}}(R,H).$\\
\noindent{\bf Step 1} $\T^{2}_{\B_{n,r}}(R,H)\subset \widetilde{\T^2}(R,H).$ \\
For any $W=[w,W_1]\in \T^{2}_{\B_{n,r}}(R,H),$ where $w\in \R^n$ and $W_1\in \R^{n\times (r-1)},$ there exists $t_k\downarrow 0$ such that 
$\dist\big( R+t_kH+\frac{t_k^2W}{2},\B_{n,r}\big)=o(t_k^2).$ 
Let $R_k\in \arg\min\left\{ \|S-\big(R+t_k H+\frac{t_k^2W}{2}\big)\|^2:\ S\in \B_{n,r}\right\},$ we have that $R_k$ has the following form:
\begin{equation}\label{May_2_4}
R_k=\begin{pmatrix}\dd\(a\)\sqrt{e-\dd\Big(\( t_kH_1+\frac{t_k^2W_1}{2}+t_k^2g(t_k)\)\(t_kH_1+\frac{t_k^2W_1}{2}+t_k^2g(t_k)\)^\top\Big)},&t_kH_1+\frac{t_k^2W_1}{2}+t_k^2g(t_k)\end{pmatrix},\notag
\end{equation}
where $g(t_k)\in \R^{n\times (r-1)}$ is some matrix sequence such that $g(t_k)=o(1).$
The Taylor expansion of the above formula gives:
\begin{align}\label{May_2_5}
&R_k=\Bigg(\dd\(a\)\Big( e-\frac{t_k^2\dd\(H_1H_1^\top\)}{2}-\frac{t_k^3\dd\(H_1W_1^\top\)}{2}-\frac{t_k^4\dd\(W_1W_1^\top\)}{8}-\frac{t_k^4\dd\(H_1H_1^\top\)}{8}\notag \\
&\qquad\qquad -t_k^3\dd\big(H_1g(t_k)^\top\big)\Big)+o(t_k^4),\ t_kH_1+\frac{t_k^2W_1}{2}+t_k^2g(t_k)\Bigg).
\end{align}
Comparing (\ref{May_2_5}) with $R_k=\begin{pmatrix} a+\frac{t_k^2 w}{2}+o(t_k^2),& t_k H_1+\frac{t_k^2 W_1}{2}+o(t_k^2)\end{pmatrix},$ we have that 
\begin{equation}\label{May_2_6}
w=-\dd\(a\)\dd\big(H_1H_1^\top\big).
\end{equation}
 Also, because $e^\top R_k=0,$ we have that 
 \begin{equation}\label{May_2_7}
 a^\top \dd\big(H_1 W_1^\top\big)=0,\ e^\top W_1=0,\ e^\top g(t_k)=0.
\end{equation}
\noindent{\bf Case 1.1} $H_1\notin a\R^{1\times (r-1)}.$\\
From (\ref{May_2_6}) and (\ref{May_2_7}), we have that $W:=[w,W_1]\in \widetilde{\T^2}(R,H)$ and so $\T^2_{\B_{n,r}}(R,H)\subset \widetilde{\T^2}(R,H).$\\
\noindent{\bf Case 1.2} $H_1=a\lambda^\top$ for some $\lambda\in \R^{r-1}.$\\
From (\ref{May_2_7}), we have that 
\begin{align}\label{May_2_8}
&e^\top \dd\(a\)\dd\big(H_1 g(t_k)^\top\big)=a^\top \dd\big(H_1 g(t_k)^\top\big)=\<\dd\(a\)H_1,g(t_k)\>\notag \\
&=\<\dd(a)a\lambda^\top,g(t_k)\>=\< e\lambda^\top,g(t_k)\>=\< \lambda^\top,e^\top g(t_k)\>=0.
\end{align}
Combining (\ref{May_2_8}) and $e^\top R_k=0,$ we have that $a^\top \dd\(W_1W_1^\top\)+a^\top\dd\(H_1H_1^\top\)=0.$ Note that $a^\top\dd\(H_1H_1^\top\)=a^\top e \|\lambda\|^2=0.$ This implies that $a^\top \dd\(W_1W_1^\top\)=0.$ This together with (\ref{May_2_6}) and (\ref{May_2_7}) implies that $W=[w,W_1]\in \widetilde{\T^2}(R,H)$.\\

\noindent{\bf Step 2.} $\widetilde{\T^2}(R,H)\subset \T^{i,2}_{\B_{n,r}}(R,H).$\\
For any $W=[-a\circ\dd\(H_1H_1^\top\),W_1]\in \widetilde{\T^2}(R,H),$ where $W_1\in \R^{n\times (r-1)}$, consider the following mapping:
$\widehat{R}(t,X):\R\times \R^{n\times (r-1)}\rightarrow \R^{n\times r}.$
\begin{equation}\label{May_2_9}
\widehat{R}(t,X)=\( \dd\(a\)\sqrt{ e-\dd\Big( \Big(tH_1+\frac{t^2 W_1}{2}+t^3X\Big) 
\Big(tH_1+\frac{t^2 W_1}{2}+t^3X\Big)^\top\Big)},tH_1+\frac{t^2W_1}{2}+t^3X\).\notag
\end{equation}
It is easy to see that $\hat{R}(t,X)\in \OB_{n,r}$ when $X$ is bounded and $t$ is small enough. Its Taylor expansion gives:
\begin{align}\label{May_2_10}
&\widehat{R}(t,X) =\Bigg( \dd(a)\Bigg[ e-\frac{t^2\dd\(H_1H_1^\top\)}{2}-\frac{t^3\dd\(H_1W_1^\top\)}{2}\notag \\
&\qquad\qquad\qquad-\frac{t^4\dd\(W_1W_1^\top\)}{8}-t^4\dd\big( H_1X^\top\big)
-\frac{t^4}{8}\dd^2\big(H_1H_1^\top\big)\notag \\
&\qquad\qquad\qquad-\frac{t^5\dd\(W_1X^\top\)}{2}-t^5K+t^6g(t,X)\Bigg],\ tH_1+\frac{t^2W_1}{2}+t^3X\Bigg),
\end{align}
where $K\in \R^{n}$ is a constant matrix and $g(t,X):\R\times \R^{n\times (r-1)}\rightarrow \R^{n}$ is a smooth mapping when $W_1$ is bounded and $t$ is small enough.\\
\noindent{\bf Case 2.1} $H_1\notin a\R^{1\times (r-1)}$\\
In this case, we have that 
\begin{equation}\label{May_2_11}
a^\top e=1,\ a^\top \dd\big(H_1H_1^\top\big)=0,\ a^\top\dd\big(H_1W_1^\top\big)=0,\ e^\top H_1=0,\ e^\top W_1=0,
\end{equation}
where the above equalities come from the fact that $H\in \T_{\B_{n,r}}(R)$ and $W\in \widetilde{\T^2}(R,H).$
In order to ensure that $e^\top \widehat{R}(t,X)=0,$ we consider the following mapping:
\begin{align}\label{May_2_12}
&F(t,X):= \Bigg(\frac{-a^\top \dd\(W_1W_1^\top\)}{8}-a^\top \dd\big(H_1X^\top\big)-\frac{a^\top \dd^2\(H_1H_1^\top\)}{8}\notag \\
&\qquad\qquad\qquad -t\frac{a^\top \dd\(W_1X^\top\)}{2}-ta^\top K+t^2 a^\top g(t,X),\  e^\top X\Bigg).
\end{align}
Because $H_1\notin a\R^{1\times (r-1)}$ we have that linear operator 
$$\D F(0,X)[\triangle X]=\( -a^\top \dd\(H_1\triangle X\),\ e^\top \triangle X\)$$
 is surjective. This also implies that there exists $X_0\in \R^{n\times (r-1)}$ such that $F(0,X_0)=0.$ Using Lemma~\ref{implicit}, we have that there exists $\delta>0$ and continuously differentiable mapping $X(t):(-\delta,\delta)\rightarrow \R^{n\times (r-1)}$ such that $X(0)=X_0$ and $F(t,X(t))=0$ for any $t\in (-\delta,\delta)$. This together with (\ref{May_2_11}) implies that $e^\top \widehat{R}(t,X(t))=0$ for $t$ sufficiently small. Since $\dist\(\widehat{R}(t,X),R+tH+\frac{t^2W}{2}\)=O(t^3),$ we have that $W\in \T^{i,2}_{\B_{n,r}}(R,H).$ \\
 
 \noindent{\bf Case 2.2} $H_1=a\lambda^\top,$ for some $\lambda\in \R^{r-1}$ 
 and $W_1\notin a\R^{1\times(r-1)}.$\\
In this case, we have that the following equalities hold:
\begin{align}\label{May_2_13}
&a^\top e=1,\ a^\top \dd\big(H_1H_1^\top\big)=0,\ a^\top \dd\big(W_1W_1^\top\big)=0,\ e^\top H_1=0,\notag \\
&e^\top W_1=0,\ a^\top \dd^2\big(H_1H_1^\top\big)=0,\ a^\top \dd\big(H_1W_1^\top\big)=0.
\end{align}
In addition, we also have that for any $X\in \R^{n\times (r-1)}$ such that $e^\top X=0,$ the following equalities hold
\begin{equation}\label{May_2_13.5}
a^\top \dd\big(H_1X^\top\big)=0.
\end{equation}
The proof of (\ref{May_2_13.5}) is the same as (\ref{May_2_8}).
Consider the following mapping:
\begin{equation}\label{May_2_14}
F(t,X):=\( \frac{-a^\top \dd\(W_1X^\top\)}{2}-a^\top K+ta^\top g(t,X),e^\top X\).
\end{equation}
Similar to case 2.1, by using $W_1\notin a\R^{1\times (r-1)},$ we can prove that there exists $\delta>0$ and a continuously differentiable mapping $X(t)$ in $(-\delta,\delta)$ such that $F(t,X(t))=0$ for any $t\in (-\delta,\delta)$. Then, a similar argument as in case 2.1 shows that $\dist\big(\widehat{R}(t,X),R+tH+\frac{t^2W}{2}\big)=O(t^3)$ and hence $W\in \T^{i,2}_{\B_{n,r}}(R,H).$\\
\noindent{\bf Case 2.3} $H_1=a\lambda_1^\top$ and $W_1=a\lambda_2^\top$ for some $\lambda_1,\lambda_2\in \R^{r-1}.$
In this case, we choose $X\equiv 0.$ It is easy to verify that $\widehat{R}(t,X)\in \B_{n,r}$ for $t$ small enough. Thus, $\dist\big(\widehat{R}(t,X),R+tH+\frac{t^2W}{2}\big)=O(t^3)$ and so $W\in \T^{i,2}_{\B_{n,r}}(R,H).$
\end{proof}
}


\begin{thebibliography}{99}
	\addtolength{\baselineskip}{-1ex}
	\bibitem{B1995} {A. I. Barvinok, {\em Problems of distance geometry and convex properties of quadratic maps.} Discrete Computational Geometry, 13 (1995): 189--202.}
	
	\bibitem{BB} J. Barzilai and J.M. Borwein, {\em Two-point step size gradient methods.} IMA J. Numerical Analysis, 8 (1988): 141--148.
	
	\bibitem{interior} S. Benson, Y. Ye, and X. Zhang, {\em Solving large-scale sparse semidefinite programs for combinatorial optimization.} SIAM J. Optimization,
	 10 (2000): 443--461.
	
	\bibitem{Boumal1} S. Bhojanapalli, N. Boumal, P. Jain and P. Netrapalli, {\em Smoothed analysis for low-rank solutions to semidefinite programs in quadratic penalty form.} 
	Conference On Learning Theory. PMLR, 2018.
	
	\bibitem{Perturbation} J.F. Bonnans and A. Shapiro, {\em Perturbation analysis of optimization problems.} Springer Science \& Business Media, 2013.
	
	\bibitem{Boumal2} N. Boumal, V. Voroninski and A. S. Bandeira, {\em The non-convex Burer-Monteiro approach works on smooth semidefinite programs.} Advances in Neural Information Processing Systems, 29 (2016): 2757--2765.
	
	\bibitem{Boumal3} N. Boumal, V. Voroninski and A. S. Bandeira, {\em Deterministic guarantees for Burer-Monteiro factorizations of smooth semidefinite programs.} Communications on Pure and Applied Mathematics, 73 (2020): 581--608.
	
	\bibitem{manopt} N. Boumal, B. Mishra, P.-A. Absil and R. Sepulchre, {\em Manopt, a matlab toolbox for optimization on manifolds.} J. Machine Learning Research, 15 (2014): 1455--1459.
	
	\bibitem{BM1} S. Burer and R.D.C. Monteiro, {\em A nonlinear programming algorithm for solving semidefinite programs via low-rank factorization.} Mathematical Programming, 95 (2003): 329--357.
	
	\bibitem{BM2} S. Burer and R.D.C. Monteiro, {\em Local minima and convergence in low-rank semidefinite programming.} Mathematical Programming, 103 (2005): 427--444.
	
\bibitem{Cifu} D. Cifuentes, {\em 
On the Burer--Monteiro method for general semidefinite programs},
Optimization Letters, 15 (2021): 2299--2309.
	
	\bibitem{AlgV} D.A. Cox, J. Little and D. O'Shea, {\em Ideals, varieties, and algorithms: an introduction to computational algebraic geometry and commutative algebra.} 
	Springer Science \& Business Media, 2013.
	
	\bibitem{Firstcone} C. Ding, D. Sun and K-C. Toh, {\em An introduction to a class of matrix cone programming.} Mathematical Programming, 144 (2014): 141--179.
	
	\bibitem{FL1976} {S. Friedland and R. Loewy, {\em Subspaces of symmetric matrices containing matrices with a multiple first eigenvalue.} Pacific Journal of Mathematics, 
	62 (1976): 389-399.}
	
	\bibitem{adaptive} B. Gao and P.-A. Absil, {\em A Riemannian rank adaptive method for low-rank matrix completion.} arXiv preprint arXiv:2103.14768 (2021).	
	
	\bibitem{combsdp} M.-X. Goemans, {\em Semidefinite programming in combinatorial optimization.} Mathematical Programming, 79 (1997): 143--161.
	
	\bibitem{agtan} U. G\"ortz and T. Wedhorn, {\em Algebraic Geometry I: Schemes.} 
	Vieweg+Teubner Verlag, (2010).
	
	\bibitem{SB} S. Helmberg and F. Rendl, {\em A spectral bundle method for semidefinite programming.} SIAM J. Optimization, 10 (2000): 673--696.
	
	\bibitem{Intromani} J. Hu, X. Liu, Z-W. Wen and Y-X. Yuan, {\em A brief introduction to manifold optimization.} J. Operations Research Society of China, 8 (2020): 199--248.
	
	
	\bibitem{colorsdp} D. Karger, R. Motwani and M. Sudan, {\em Approximate graph coloring by semidefinite programming.} J. ACM, 45 (1998): 246--265.
		
	\bibitem{SDPgep} S.E. Karisch and F. Rendl, {\em Semidefinite programming and graph equipartition.} In Topics in Semidefinite and Interior-Point Methods, 
	American Mathematical Society, (1998): 77--95.
		
	\bibitem{RieBB} B. Lannazzo and M. Porcelli, {\em The Riemannian Barzilai-Borwein method with nonmonotone line search and the matrix geometric mean computation.} IMA J. Numerical Analysis, 38 (2018): 495--517.
	
	\bibitem{propack} R.M. Larsen, {\em PROPACK -- Software for large and sparse SVD calculations.} Available at http://sun. stanford. edu/rmunk/PROPACK (2004): 2008--2009.
	
	\bibitem{maniALM} C. Liu and N. Boumal, {\em Simple algorithms for optimization on Riemannian manifolds with constraints.} Applied Mathematics \& Optimization, 82 (2020): 949--981.
	\bibitem{mosek} {\tt https://www.mosek.com/}
	
	\bibitem{inirank} G. Pataki, {\em On the rank of extreme matrices in semidefinite programs and the multiplicity of optimal eigenvalues.} Mathematics of Operations Research, 23 (1998): 339--358.
	
	\bibitem{Kmeans} J. Peng and Y. Wei, {\em Approximating k-means-type clustering via semidefinite programming.} SIAM J. Optimization, 18 (2007): 186--205.
	
	\bibitem{mcook} K. B. Petersen and M. S. Pedersen, {\em The matrix cookbook.} Technical University of Denmark (2008): 7(15), 510.
	
	\bibitem{Boumal4} T. Pumir, S. Jelassi and N. Boumal, {\em Smoothed analysis of the low-rank approach for smooth semidefinite programs.} arXiv preprint arXiv:1806.03763 (2018).
	
	\bibitem{BB2} M. Raydan, {\em The Barzilai and Borwein gradient method for the large scale unconstrained minimization problem.} SIAM J. Optimization, 7 (1997): 26--33.
	
	\bibitem{SDPrelax} F. Rendl, {\em Semidefinite programming and combinatorial optimization.} Applied Numerical Mathematics, 29 (1999): 255--281.
	
	\bibitem{Robin} S.M. Robinson, {\em First order conditions for general nonlinear optimization.} SIAM J. Applied Mathematics, 30 (1976): 597--607.
	
	\bibitem{Musae} B. Rozemberczki, C. Allen and R. Sarkar, {\em Multi-scale Attributed Node Embedding.} J. Complex Networks 9 (2021): cnab014.
	
	\bibitem{Sensitivity} A. Shapiro, {\em Sensitivity analysis of generalized equations.} J. of Mathematical Sciences, 115 (2003).
	
	\bibitem{thetalib} M. Trick, V. Chvatal, W. Cook, D. Johnson, C. McGeoch, and R. Tarjan, {\em The Second DIMACS Implementation Challenge: NP Hard Problems: Maximum Clique, Graph Coloring, and Satisfiability.} Rutgers University, New Brunswick, NJ, 1992; available at http://dimacs.rutgers.edu/Challenges/.
	
	\bibitem{SDPT31} K.-C. Toh, M. Todd and R. T{\"u}t{\"u}nc{\"u}, {\em SDPT3 -- a MATLAB software package for semidefinite programming, version 1.3.} 
	Optimization Methods and Software, 11 (1999): 545--581.
	
	\bibitem{SDPT32} K.-C. Toh, M. Todd and R. T{\"u}t{\"u}nc{\"u}, {\em Solving semidefinite-quadratic-linear programs using SDPT3.} Mathematical Programming, 95 (2003): 189--217.
	
	\bibitem{TTT} K.-C. Toh, R. T{\"u}t{\"u}nc{\"u} and M. Todd, {\em Inexact primal-dual path-following algorithms for a special class of convex quadratic SDP and related problems.} 
	Pacific J. Optimization, 3 (2007):135--164.
	
	\bibitem{TohYun}
	K.C. Toh and S.W. Yun, {\em 
An accelerated proximal gradient algorithm for nuclear norm regularized least squares problems.}
Pacific J. Optimization, 6 (2010): 615--640.
	
	\bibitem{WZW} Y. Wang, K. Deng, H. Liu and Z. Wen, {\em A decomposition augmented Lagragian method for low-rank semidefinite programming.} arXiv preprint arXiv:2109.11707 (2021).
	
	\bibitem{Weiszf} E. Weiszfeld, {\em Sur le point lequel la Somme des distances de n points donn{\'e}s est minimum.} Tohoku Mathematical Journal, First Series 43 (1937): 355--386.
	
	\bibitem{SDPNALp} L. Yang, D. Sun and K-C. Toh, {\em SDPNAL+: a majorized semismooth Newton-CG augmented Lagrangian method for semidefinite programming with nonnegative constraints.} Mathematical Programming Computation, 7 (2015): 331--366.
	
	\bibitem{Secondcone} L. Zhang, N. Zhang and X. Xiao, {\em On the second-order directional derivatives of singular values of matrices and symmetric matrix-valued functions.} 
	Set-Valued and Variational Analysis, 21 (2013): 557--586.
	
	\bibitem{SDPNAL} X-Y, Zhao, D. Sun and K-C. Toh, {\em A Newton-CG augmented Lagrangian method for solving semidefinite programming.} SIAM J. Optimization, 20 (2010): 1737--1765.
	
	\bibitem{DingALM} Y. Zhou, C. Bao, C. Ding and J. Zhu, {\em A semismooth Newton based augmented Lagragian method for nonsmooth optimization on matrix manifold.} arXiv preprint arXiv:2103.02855 (2021).

\end{thebibliography}
\end{document}